\tikzstyle{int}=[draw, fill=blue!20, minimum size=2em]
\tikzstyle{dot}=[circle, draw, fill=blue!20, minimum size=2em]
\tikzstyle{init} = [pin edge={to-,thin,black}]
\newcommand{\XQP}{X^{\mathsf{c}}}
\newcommand{\tXQP}{\tilde{X}^{\mathsf{c}}}
\newcommand{\stepa}[1]{\overset{\rm (a)}{#1}}
\newcommand{\stepb}[1]{\overset{\rm (b)}{#1}}
\newcommand{\allones}{\mathbf{J}}
\newcommand{\ones}{\mathbf{1}}
\newcommand{\pth}[1]{\left( #1 \right)}
\newcommand{\qth}[1]{\left[ #1 \right]}
\newcommand{\sth}[1]{\left\{ #1 \right\}}
\newcommand{\R}{\mathbb{R}}
\newcommand{\eps}{\varepsilon}
\newcommand{\E}{\mathbb{E}}
\renewcommand{\P}{\mathbb{P}}
\newcommand{\C}{\mathbb{C}}
\newcommand{\1}{\mathbbm{1}}
\renewcommand{\Im}{\operatorname{Im}}
\renewcommand{\Re}{\operatorname{Re}}
\newcommand{\Tr}{\operatorname{Tr}}
\newcommand{\polylog}{\operatorname{polylog}}
\newcommand{\tF}{\tilde{F}}
\newcommand{\tG}{\tilde{G}}
\newtheorem{theorem}{Theorem}[section]
\newtheorem{lemma}[theorem]{Lemma}
\newtheorem{proposition}[theorem]{Proposition}
\newtheorem{corollary}[theorem]{Corollary}
\newtheorem*{theorem*}{Theorem}
\theoremstyle{definition}
\newtheorem{definition}{Definition}[section]
\newtheorem{remark}{Remark}[section]
\newcommand{\reals}{{\mathbb{R}}}
\newcommand{\complex}{{\mathbb{C}}}
\newcommand{\expect}[1]{\mathbb{E}\left[ #1 \right]}
\newcommand{\indc}[1]{{\mathbf{1}_{\left\{{#1}\right\}}}}
\newcommand{\defn}{\triangleq}
\renewcommand{\tilde}{\widetilde}
\newcommand{\bone}{\mathbf{1}}
\newcommand{\iu}{\mathbf{i}}
\newcommand{\coord}{\mathbf{e}}
\newcommand{\bI}{\mathbf{I}}
\newcommand{\bJ}{\mathbf{J}}
\newcommand{\sG}{\mathsf{G}}
\newcommand{\cZ}{\mathcal{Z}}
\newcommand{\graphA}{\mathbf{A}}
\newcommand{\graphB}{\mathbf{B}}
\newcommand{\Bern}{\mathsf{Bern}}
\newcommand{\argmin}{\operatorname*{argmin}}
\newcommand{\argmax}{\operatorname*{argmax}}
\newcommand{\prob}[1]{ \mathbb{P}\left\{ #1 \right\} }
\newcommand{\iprod}[2]{\langle #1, #2 \rangle}
\newcommand{\Iprod}[2]{\langle #1, #2 \rangle}
\newcommand{\ER}{Erd\H{o}s-R\'{e}nyi\xspace}
\newcommand{\fnorm}[1]{\|#1\|_{F}}
\newcommand{\calE}{{\mathcal{E}}}
\newcommand{\termI}{\mathrm{(I)}}
\newcommand{\termII}{\mathrm{(II)}}
\renewcommand{\hat}{\widehat}
\newcommand{\ridge}{\eta^2}
\newcommand{\fS}{\mathfrak{S}}
\title{Spectral Graph Matching and Regularized Quadratic Relaxations II: Erd\H{o}s-R\'enyi Graphs and Universality}
\author{Zhou Fan, Cheng Mao, Yihong Wu, and Jiaming Xu\thanks{
Z.\ Fan, C.\ Mao, and Y.\ Wu are with Department of Statistics and Data Science, Yale University, New Haven, USA, 
\texttt{\{zhou.fan,cheng.mao,yihong.wu\}@yale.edu}.
J.\ Xu is with The Fuqua School of Business, Duke University, Durham, USA, \texttt{jx77@duke.edu}.
Y.~Wu is supported in part by the NSF Grants CCF-1527105, CCF-1900507, an NSF CAREER award CCF-1651588, and an Alfred Sloan fellowship.
J.~Xu is supported by the NSF Grants  IIS-1838124, CCF-1850743, and CCF-1856424.
}}
\begin{document}

\maketitle

\begin{abstract}

We analyze a new spectral graph matching algorithm, GRAph Matching by Pairwise
eigen-Alignments (GRAMPA), for recovering the latent vertex correspondence between
two unlabeled, edge-correlated weighted graphs. Extending the exact recovery guarantees established in the companion paper \cite{FMWX19a}
for Gaussian weights, in this work, we prove the universality of these
guarantees for a general correlated Wigner model. In particular, for two \ER graphs
with edge correlation coefficient $1-\sigma^2$ and average degree at
least $\polylog(n)$, we show that GRAMPA exactly recovers the latent vertex correspondence with high
probability when $\sigma \lesssim 1/\polylog(n)$. Moreover, we establish a similar guarantee for a variant of GRAMPA, corresponding to a tighter quadratic programming relaxation of
the quadratic assignment problem. Our analysis exploits a resolvent
representation of the GRAMPA similarity matrix and local laws for the resolvents
of sparse Wigner matrices.

\end{abstract}

\tableofcontents

\section{Introduction}
Given two (weighted) graphs, graph matching aims at finding a bijection between
the vertex sets that maximizes the total edge weight correlation between the two graphs. 
It reduces to the graph isomorphism problem when the two graphs can be matched perfectly.
Let $A$ and $B$ be the (weighted) adjacency matrices of the two graphs on $n$ vertices.
Then the graph matching problem can be formulated as solving the following
\emph{quadratic assignment problem} (QAP)~\cite{Pardalos94thequadratic,burkard1998quadratic}: 
\begin{align}
\max_{\Pi \in \fS_n} \, \langle A, \Pi B \Pi^\top \rangle , 
\label{eq:QAP}
\end{align}
where  $\fS_n$ denotes the set of permutation matrices in $\R^{n \times n}$ and  $\iprod{\cdot}{\cdot}$ denotes the matrix inner product. The QAP is NP-hard to solve or to approximate within a growing factor \cite{makarychev2010maximum}. 

In the companion paper~\cite{FMWX19a}, we proposed a computationally efficient
spectral graph matching method, called GRAph Matching by Pairwise
eigen-Alignments (GRAMPA). Let us write the spectral decompositions of $A$ and $B$ as 
\begin{align}
A=\sum_i \lambda_i v_iv_i^\top
\quad \text{ and } \quad
B=\sum_j \mu_j w_jw_j^\top. \label{eq:eig-dec}
\end{align}
Given a tuning parameter $\eta>0$,
GRAMPA first constructs an $n\times n$ similarity matrix\footnote{In
\cite{FMWX19a}, $X$ is defined without the factor $\eta$ in the numerator. We
include $\eta$ here for convenience in the proof; this does not affect the
algorithm as the rounded solution $\hat{\Pi}$ is invariant to rescaling $X$.}
\begin{align}
X = \sum_{i, j} \frac{ \eta }{ (\lambda_i - \mu_j)^2 + \eta^2 } v_i v_i^\top \allones  w_j w_j^\top,
\label{eq:specrep}
\end{align}
where $\allones$ is the $n\times n$ all-ones matrix.
Then it outputs a permutation matrix  $\hat{\Pi}$ by
    ``rounding'' $X$ to a permutation matrix,
        for example, by solving the following \emph{linear assignment problem} (LAP)
\begin{equation}
\label{eq:linearassignment}
\hat \Pi \in \argmax_{\Pi \in \fS_n} \; \iprod{X}{\Pi} .
\end{equation}


Let $\Pi_* \in \fS_n$ be the latent true matching,
and denote the entries of $A$ and $\Pi_*B\Pi_*^\top$ as $a_{ij}$ and
$b_{\pi_*(i)\pi_*(j)}$.
A Gaussian Wigner model is studied in \cite{FMWX19a}, where 
$\{ (a_{ij},b_{\pi_*(i)\pi_*(j)}) \}$ are i.i.d.\  pairs of
correlated Gaussian variables such that $b_{\pi_*(i)\pi_*(j)} =
a_{ij} + \sigma z_{ij}$ for a noise level $\sigma \ge 0$, and $a_{ij}$ and $z_{ij}$ are independent standard Gaussian. 
It is shown that GRAMPA exactly recovers the vertex correspondence 
$\Pi_*$ with high probability when $\sigma = O(1/\log n)$.
Simulation results in~\cite[Section 4.1]{FMWX19a} further show that the empirical performance
of GRAMPA under the Gaussian Wigner model is very similar to that under the \ER model where 
$\{ (a_{ij},b_{\pi_*(i)\pi_*(j)}) \}$ are i.i.d.\ pairs of correlated centered Bernoulli random variables, suggesting that the performance
of GRAMPA enjoys universality. 

In this paper, we prove a universal exact-recovery guarantee for GRAMPA, under a
general Wigner matrix model for the weighted adjacency matrix:
Let $A=(a_{ij})$ be a symmetric random matrix in $\R^{n \times n}$, where the
entries $(a_{ij})_{i \le j}$ are independent. Suppose that
\begin{equation}\label{eq:centerscale}
\E\left[a_{ij} \right]=0 \text{ for all } i,j,
\qquad \E\left[a_{ij}^2 \right]=\frac{1}{n} \text{ for all } i \neq j,
\end{equation}
and
\begin{equation}\label{eq:momentcond}
\E\left[ \left|a_{ij}\right|^k\right] \le \frac{C^k}{n d^{(k-2)/2}} \quad \text{ for all } i,j \text{
and each } k \in [2,(\log n)^{10\log\log n}],
\end{equation}
where $d \equiv d(n)$ is an $n$-dependent sparsity parameter and $C$ is an
absolute positive constant.



Of particular interest are the following special cases:
\begin{itemize}
\item Bounded case: 
The entries are bounded in magnitude by $\frac{C}{\sqrt{n}}$. Then 
(\ref{eq:momentcond}) is fulfilled for $d=n$ and all $k$.

\item Sub-Gaussian case: 
The sub-Gaussian norm of each entry satisfies
\begin{equation}\label{eq:subgaussiancondition}
\|a_{ij}\|_{\psi_2} \defn
\sup_{k \geq 1} k^{-1/2}\E\left[ \left|a_{ij} \right|^k\right]^{1/k} =O\left(1/\sqrt{n}\right).
\end{equation}
It is
easily checked that (\ref{eq:momentcond}) is satisfied for $d=n/(\log n)^{11 \log \log n}$ and all large $n$.

\item Erd\H{o}s-R\'enyi graphs with edge probability $p \equiv p(n)$. We may
center and scale the adjacency matrix $A$ such that $a_{ij} \sim (\Bern(p)-p)/\sqrt{np(1-p)}$
for $i\neq j$, which satisfies (\ref{eq:centerscale}) and (\ref{eq:momentcond})
for $d=np(1-p)$ (cf. \prettyref{lmm:AB}).
\end{itemize}



With the moment conditions \prettyref{eq:centerscale} and \prettyref{eq:momentcond} specified, we are ready to introduce the correlated Wigner model, which encompasses the correlated \ER graph model proposed in \cite{Pedarsani2011} as a special case.
\begin{definition}[Correlated Wigner model] 
Let $n$ be a positive integer, $\sigma \in [0,1]$ an ($n$-dependent) noise
parameter, $\pi_*$ a latent permutation on $[n]$, and 
$\Pi_* \in \{0, 1\}^{n \times n}$ the corresponding permutation matrix
such that $(\Pi_*)_{i \pi_*(i)} =1$.
Suppose that
$\left\{ (a_{ij},b_{\pi_*(i)\pi_*(j)}):i \leq j\right\}$ are independent pairs of
random variables such that both  $A=(a_{ij})$ and
$B=(b_{ij})$ satisfy (\ref{eq:centerscale}) and
(\ref{eq:momentcond}),
\begin{equation}
\E \left[a_{ij}b_{\pi_*(i)\pi_*(j)} \right] \geq \frac{1-\sigma^2}{n}
\quad \text{ for all } i \ne j, 
\label{eq:correlation}
\end{equation}
and for a constant $C>0$, any $D>0$, and all $n \geq n_0(D)$,
\begin{equation}
\prob{ \left\| A - \Pi_* B \Pi_*^\top \right\| \le C\sigma }
\ge 1- n^{-D}
\label{eq:diffnorm}
\end{equation}
where $\|\cdot\|$ denotes the spectral norm.
\end{definition}
The parameter $\sigma$ measures the effective noise level in the model. In the special case of sparse \ER model, $A$ and $B$ are the centered and normalized adjacency matrices of two \ER graphs, which differ by a fraction $2\sigma^2$ of edges approximately.

In this paper, we prove the following exact recovery guarantee for GRAMPA:

\begin{theorem*}[Informal statement]
    For the correlated Wigner model, if $d \ge \polylog(n)$ and $\sigma \le c
\left(\log n \right)^{-2\kappa}$ for any fixed constant $\kappa>2$ and a
    sufficiently small constant $c > 0$, then GRAMPA with $\eta = 1/ \polylog n$ 
    recovers $\pi_*$ exactly with high probability for large $n$. 
    If furthermore $a_{ij}$ and $b_{ij}$
are sub-Gaussian and satisfy (\ref{eq:subgaussiancondition}),
then this holds with $\kappa=1$.
\end{theorem*}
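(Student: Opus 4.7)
The plan is to recast the GRAMPA similarity matrix \eqref{eq:specrep} in resolvent form, and then invoke entrywise local laws for sparse Wigner matrices to transfer the Gaussian analysis of \cite{FMWX19a} to the general correlated model. Set $G_A(z)=(A-zI)^{-1}$ and $G_B(z)=(B-zI)^{-1}$. Using the Lorentzian convolution identity
\begin{equation*}
\frac{\eta}{(\lambda-\mu)^2+\eta^2} \;=\; \frac{1}{\pi}\int_{\R}\Im\!\left[\frac{1}{\lambda-E-i\eta/2}\right]\Im\!\left[\frac{1}{\mu-E-i\eta/2}\right]dE,
\end{equation*}
the spectral representation \eqref{eq:specrep} becomes
\begin{equation*}
X \;=\; \frac{1}{\pi}\int_{\R}\Im G_A(E+i\eta/2)\,\allones\,\Im G_B(E+i\eta/2)\,dE,
\end{equation*}
and since $\|A\|,\|B\|\le 2+o(1)$ with overwhelming probability, the integral is effectively confined to a bounded bulk window. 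This converts every statement about $X$ into a statement about bilinear forms in the resolvents $G_A, G_B$ evaluated at the small imaginary scale $\eta = 1/\polylog(n)$.

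The main analytical tool is the entrywise local semicircle law for sparse Wigner matrices (Erd\H{o}s--Knowles--Yau--Yin and descendants): under \eqref{eq:momentcond} with $d\ge \polylog(n)$, uniformly in $z=E+i\eta/2$ in the bulk,
\begin{equation*}
(G_A(z))_{ab} \;=\; m_{\rm sc}(z)\,\delta_{ab} \;+\; \O\!\left(\sqrt{\tfrac{\Im m_{\rm sc}(z)}{n\eta}}+\tfrac{1}{n\eta}\right),
\end{equation*}
and analogously for $G_B$. The moment range $k\le (\log n)^{10\log\log n}$ in \eqref{eq:momentcond} is precisely what is needed to feed the moment method underlying these local laws at spectral scale $1/\polylog(n)$; this allowance is what determines the polylog constraint on $\sigma$ with exponent $\kappa>2$, while the stronger sub-Gaussian hypothesis \eqref{eq:subgaussiancondition} gives control of all moments and sharpens the analysis to $\kappa=1$. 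After conjugating $B$ by $\Pi_*$ we may take $\pi_*=\mathrm{id}$; the coupling inputs \eqref{eq:correlation}--\eqref{eq:diffnorm} combined with the second resolvent identity $G_B-G_A=G_A(A-B)G_B$ then let us transfer deterministic approximations between the two resolvents.

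Plugging the local law into the resolvent representation, each entry of $X$ splits as a deterministic signal plus a fluctuation. The signal is the $m_{\rm sc}\,I$ part of each resolvent integrated against $\allones$, amplified on the $\Pi_*$-matched diagonal by the positive bias coming from \eqref{eq:correlation}, and reproduces exactly the Gaussian signal picture of \cite{FMWX19a}. The fluctuation terms are contractions of the local-law error matrices against the rank-one bridge $\mathbf{1}\mathbf{1}^\top$, controlled through Ward identities $\sum_b|G_{ab}(z)|^2=\Im G_{aa}(z)/\eta$ together with concentration of quadratic forms such as $\mathbf{1}^\top G_A(z)\mathbf{1}\approx n\,m_{\rm sc}(z)$, again up to polylog factors whose exponents accumulate into $\kappa$.

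The main obstacle is the quantitative tracking of off-diagonal entries of $X$ uniformly in $E$ at the scale $\eta=1/\polylog(n)$: a single resolvent entry already carries an $O(1/\sqrt{n\eta})$ error, and this error cannot be allowed to accumulate when summed against $\allones$. The resolution has two ingredients: (i) exploiting the fluctuation-averaging cancellations that arise when the rank-one factor $\mathbf{1}$ is contracted against $G_A$ or $G_B$, so that $\mathbf{1}^\top(G_A-m_{\rm sc}I)\mathbf{1}$ is much smaller than the naive bound $n\cdot\O(1/\sqrt{n\eta})$ suggests; and (ii) a discretization of $E$ on an $\eta^2$-grid together with a union bound, which is affordable within the polylog moment budget of \eqref{eq:momentcond}. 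Once the $\Pi_*$-matched entries of $X$ are shown to dominate the unmatched ones by the required polylog margin, the LAP rounding step \eqref{eq:linearassignment} returns $\Pi_*$ by the same argument as in \cite{FMWX19a}, and an analogous proof handles the tighter QP relaxation.
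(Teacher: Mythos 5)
Your resolvent representation via the Lorentzian convolution identity is correct and is a legitimate alternative to the contour-integral formula used in the paper (\prettyref{prop:res-rep}); the two differ only in geometry (real-axis integral at height $\eta/2$ versus a closed rectangular contour), and either can serve as the starting point. You also correctly name the main external ingredients: local laws for sparse Wigner resolvents, Ward identities, fluctuation averaging, and a discretization/union bound in the spectral parameter.

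However, the mechanism by which diagonal dominance arises is misidentified, and this is where the argument breaks. You attribute the signal to ``the $m_{\rm sc}\,I$ part of each resolvent integrated against $\allones$,'' but that contribution gives $e_k^\top m_0(z)\bI\bone = m_0(z)$ for \emph{every} $k$, hence a contribution to $X_{k\ell}$ that is identical for all $(k,\ell)$ and cannot separate matched from unmatched entries. Likewise, inserting the row-sum local law $|e_k^\top R_A(z)\bone| \lesssim \polylog(n)$ directly yields only the crude uniform bound $|X_{k\ell}| \lesssim \polylog(n)$---the paper flags this explicitly right after \prettyref{eq:contourrep-wigner}. The actual signal lives in the \emph{fluctuations} of the row sums $e_k^\top R_A(z)\bone$ and $e_\ell^\top R_B(z+\iu\eta)\bone$ about $m_0$, which are positively correlated when $\ell=\pi_*(k)$ and uncorrelated otherwise. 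Isolating this requires the Schur-complement decomposition of \prettyref{lmm:RSRapprox}, which writes $e_k^\top R_A(z)\bone \approx m_0(z)\bigl(1 - a_k^\top R_A^{(12)}(z)\bone\bigr)$; multiplying the two row-sum approximations then produces a bilinear form $a_k^\top M b_\ell$ in the conditionally independent rows $a_k,b_\ell$, whose conditional expectation is $\approx \frac{1-\sigma^2}{n}\Tr M$ exactly when $\ell=\pi_*(k)$ (\prettyref{lmm:bilinear}); the trace, evaluated via \prettyref{lmm:traceM} and \prettyref{lmm:m_product}, yields the $\frac{1-\sigma^2}{\eta}$ diagonal signal. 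Your plan has no counterpart to this step, so it would not produce the required separation. A second, smaller point: the fluctuation-averaging machinery is not used in the paper to tame the fluctuation part of $X_{k\ell}$ directly as you suggest, but to prove the total-sum law $|\bone^\top R(z)\bone - n\,m_0(z)| \lesssim n(\log n)^b/\sqrt{d}$ (\prettyref{eq:wignertotalsumlaw}), which is then what controls $\|M\|_F$ and the trace of $M$ above.
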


This theorem generalizes the exact recovery guarantee for GRAMPA
proved in~\cite{FMWX19a} for the Gaussian Wigner model, albeit at the expense of
a slightly stronger requirement for $\sigma$ than in the Gaussian case. 
The requirement that $d \ge \polylog(n)$ and $\sigma \leq 1/\polylog(n)$ is the state-of-the-art for
polynomial time algorithms on sparse \ER graphs~\cite{DMWX18}, although we note that the
recent work of \cite{barak2018nearly} provided an algorithm with
super-polynomial runtime $n^{O(\log n)}$ that achieves exact recovery when $d \ge n^{o(1)}$
under the much weaker condition of $\sigma \leq 1-\left(\log n\right)^{-o(1)}$.

The analysis in~\cite{FMWX19a} relies heavily on the rotational invariance of Gaussian
Wigner matrices, and does not extend to non-Gaussian models. Here,
instead, our universality analysis uses a \emph{resolvent
representation} of the GRAMPA similarity matrix \prettyref{eq:specrep} via a
contour integral (cf.\ Proposition~\ref{prop:res-rep}). 
Capitalizing on local laws for the resolvent of sparse Wigner
matrices~\cite{EKYYlocal,EKYYsparse}, we show that the similarity matrix
\prettyref{eq:specrep} 
is with high probability \emph{diagonal dominant} in the sense that 
$\min_k X_{k \pi_*(k) } >\max_{\ell \neq \pi_*(k)} X_{k\ell}$. This enables rounding procedures as simple as thresholding to succeed. 


From an optimization point of view, GRAMPA can also be interpreted as solving a \emph{regularized quadratic programming (QP) relaxation} of the QAP. 
More precisely, the QAP \prettyref{eq:QAP} can be equivalently written as 
\begin{align}
\min_{\Pi \in \fS_n} \, \| A \Pi - \Pi B \|_F^2 , 
\label{eq:qap-equiv}
\end{align}
and the similarity matrix $X$ in \prettyref{eq:specrep} is 
a positive scalar multiple of the solution $\widetilde{X}$ to 
\begin{align}
\argmin_{X \in \R^{n \times n}} & ~ 
\| A X - X B \|_F^2 + \ridge \|X\|_F^2 \nonumber \\
\text{s.t.} & ~ \bone^\top X \bone=n .
\label{eq:regQP}
\end{align}
(See \cite[Corollary~2.2]{FMWX19a}.)
This is a convex relaxation of the program \prettyref{eq:qap-equiv} with an additional ridge regularization term. 
As a result, our analysis immediately yields the same exact recovery guarantees
for algorithms that round the solution $\tilde{X}$ to~\prettyref{eq:regQP} instead of $X$. 
In Section~\ref{sec:constrained}, we study a tighter relaxation of the QAP~\prettyref{eq:qap-equiv} that imposes row-sum constraints, and establish the same exact recovery guarantees (up to universal constants) by employing similar technical tools.

\paragraph{Organization}

The rest of the paper is organized as follows. 
In Section~\ref{sec:universal}, we state the main exact recovery guarantees for GRAMPA under the correlated Wigner model, as well as the results specialized to the (sparse) \ER model. 
We start the analysis by introducing the key resolvent representation of the GRAMPA similarity matrix in Section~\ref{sec:res}. 
As a preparation for the main proof, Section~\ref{sec:rmt} provides the needed tools from random matrix theory. 
The proof of correctness for GRAMPA is then presented in Section~\ref{sec:pf-main}. 
In Section~\ref{sec:constrained}, we extend the theoretical guarantees to a tighter QP relaxation. 
Finally, Section~\ref{sec:locallaw} is devoted to proving the resolvent bounds which form the main technical ingredient to our proofs. 



\paragraph{Notation} 
Let $[n] \defn \{1, \dots, n\}$. Let $\iu=\sqrt{-1}$.
In a Euclidean space $\R^n$ or $\C^n$,
let $\coord_i$ be the $i$-th standard basis vector,
and let $\bone = \bone_n$ be the all-ones vector.
Let $\bJ = \bJ_n$ denote the $n \times n$ all-ones matrix, and let $\bI = \bI_n$
denote the $n \times n$ identity matrix. The subscripts are often omitted when
there is no ambiguity. 

The inner product of $u,v\in\complex^n$ is defined as $\Iprod{u}{v} = u^*v$.
Similarly, for matrices, $\Iprod{A}{B} = \Tr(A^*B)$.
Let $\|v\| \equiv \|v\|_2=\langle v,v \rangle$ and $\|v\|_\infty=\sup_i
|v_i|$ for vectors. Let $\|M\| \equiv \|M\|_\text{op}=\sup_{v:\|v\|=1} \|Mv\|$,
$\|M\|_F=\langle M,M \rangle$, and $\|M\|_\infty=\sup_{i,j} |M_{ij}|$ for
matrices.

Let $x \land y=\min(x,y)$ and $x \lor y=\max(x,y)$.
We use $C,C',c,c',\ldots$ to denote positive constants that may change from line
to line. For sequences of positive real numbers $(a_n)_{n=1}^\infty$ and $(b_n)_{n=1}^\infty$, we write $a_n \lesssim b_n$ (resp. $a_n \gtrsim b_n$) if there is a constant $C>0$ such that $a_n \le C b_n$ (resp. $b_n \le C a_n$) for all $n \ge 1$, 
$a_n \asymp b_n$ if both relations $a_n \lesssim b_n$ and $a_n \gtrsim b_n$ hold, and $a_n \ll b_n$ if $a_n/b_n
\to 0$ as $n \to \infty$.
We write $a_n=O(b_n)$ if $|a_n| \lesssim b_n$ and $a_n=o(b_n)$ if $|a_n| \ll
b_n$.

\section{Exact recovery guarantees for GRAMPA}
\label{sec:universal}	
In this section, we state the the exact recovery guarantees for GRAMPA, making the earlier informal statement precise.

\begin{theorem} \label{thm:diag-dom}
Fix constants $a>0$ and $\kappa>2$, and let $\eta \in [1/(\log n)^a,1]$.
Consider the correlated Wigner model
with $n \geq d \geq (\log
n)^{c_0}$ where $c_0>\max(32+4a,4+7a)$.
Then there exist $(a,\kappa)$-dependent constants $C,n_0>0$ and a
deterministic quantity $r(n) \equiv r(n,\eta,d,a)$ satisfying $r(n) \to 0$ as
$n \to \infty$, such that for all $n \geq n_0$,
with probability at least $1-n^{- 10}$,
the matrix $X$ in \prettyref{eq:specrep} satisfies
\begin{align}
 & \max_{\ell \neq \pi_*(k)} \left| X_{k\ell} \right|  \le  C (\log n)^{\kappa } 
 \frac{1}{\sqrt{\eta} },  \nonumber \\
 & \max_k \left| X_{k \pi_*(k) }- \frac{1-\sigma^2}{\eta}\right|
 \le C \left(\frac{r(n)}
{\eta}+\frac{\sigma}{\eta^2} + (\log n)^{\kappa} 
 \frac{1}{\sqrt{\eta} }  \right).
 \label{eq:diagonal-wigner-special}
\end{align}
If there is a universal constant $K$ for which $a_{ij}$ and $b_{ij}$
are sub-Gaussian with $\|a_{ij}\|_{\psi_2},\|b_{ij}\|_{\psi_2} \leq K/\sqrt{n}$,
then the above holds also with $\kappa=1$.
\end{theorem}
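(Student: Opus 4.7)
The plan is to combine the resolvent representation of the GRAMPA similarity matrix (Proposition~\ref{prop:res-rep}) with isotropic local laws for resolvents of sparse Wigner matrices. By Proposition~\ref{prop:res-rep}, $X$ admits a contour-integral expression of the form
$$X \;=\; \frac{1}{2\pi}\,\mathrm{Re}\int_\R G_A(s+\iu\eta/2)\,\bJ\,G_B(s-\iu\eta/2)\,ds,$$
where $G_A(z)=(A-z\bI)^{-1}$ and $G_B(z)=(B-z\bI)^{-1}$. Since $\bJ=\bone\bone^\top$ has rank one, the integrand factorizes and each entry becomes
$$X_{k\ell} \;=\; \frac{1}{2\pi}\,\mathrm{Re}\int_\R \Iprod{e_k}{G_A(s+\iu\eta/2)\bone}\;\Iprod{e_\ell}{G_B(s-\iu\eta/2)\bone}\,ds,$$
reducing the analysis to the scalar bilinear forms $\Iprod{e_k}{G_A(z)\bone}$ and their $G_B$-analogues.

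I would then exploit the correlation assumption \prettyref{eq:diffnorm}: setting $E \defn \Pi_*B\Pi_*^\top - A$, so that $\|E\|\le C\sigma$ with high probability, the second resolvent identity
$$G_B(z) \;=\; \Pi_*^\top G_A(z)\Pi_* \;-\; \Pi_*^\top G_A(z)\,E\,G_{A+E}(z)\Pi_*$$
lets me transfer estimates from $G_A$ to $G_B$, incurring an additive operator-norm error of order $\sigma/\eta^2$ — precisely the $\sigma/\eta^2$ contribution in \prettyref{eq:diagonal-wigner-special}. Combined with $\E[a_{ij}b_{\pi_*(i)\pi_*(j)}] \ge (1-\sigma^2)/n$, this reduces the problem to analyzing $X$ entrywise in terms of a single resolvent $G_A$ acting on $\bone$, with an overall prefactor of $1-\sigma^2$ attached to the index pairs $(k,\pi_*(k))$.

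Next, I would invoke the isotropic local law for sparse Wigner matrices (established in Section~\ref{sec:locallaw}, refining \cite{EKYYlocal,EKYYsparse}): uniformly over a discretization of $z = s + \iu\eta'$ with $|s|$ bounded and $\eta' \asymp \eta$,
$$u^\top G_A(z)\,v - m_{\mathrm{sc}}(z)\,u^\top v \;=\; \O\!\bigl((\log n)^\kappa\,\varepsilon_n\bigr) \quad \text{for all unit } u,v,$$
with $\varepsilon_n \asymp (n\eta)^{-1/2} + (\eta\sqrt{d})^{-1}$; the second summand vanishes under the sub-Gaussian assumption, permitting $\kappa=1$. The key observation is that the diagonal contribution of order $1/\eta$ comes not from the mean of the bilinear form but from its \emph{variance}: by the resolvent identity $G_A(z_1)G_A(z_2) = (G_A(z_1) - G_A(z_2))/(z_1 - z_2)$ and the Ward identity $\sum_j |G_{A,kj}(z)|^2 = \eta^{-1}\Im\,G_{A,kk}(z)$, one has the exact evaluation $\int G_A(s+\iu\eta/2)\,G_A(s-\iu\eta/2)\,ds = (2\pi/\eta)\,\bI$. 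Transferring this from $G_A G_A$ to $G_A \bJ G_A$ via eigenvector-delocalization / quantum-unique-ergodicity consequences of the isotropic law then yields the diagonal value $(1-\sigma^2)/\eta$ on entries $(k,\pi_*(k))$, with the other entries bounded by $(\log n)^\kappa/\sqrt{\eta}$; the bias in the local-law approximation contributes the $r(n)/\eta$ term.

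The main obstacle is the local-law input itself. Proving an isotropic local law for sparse Wigner matrices with sparsity only $d\ge(\log n)^{c_0}$, uniform over a fine spectral grid at the scale $\eta \asymp 1/\polylog(n)$ (well below the usual mesoscopic scale) and with only polylogarithmic overhead, is delicate; this is the subject of Section~\ref{sec:locallaw} via a refined self-consistent-equation analysis in the sparse regime. A secondary technical point is translating entrywise bilinear-form bounds into bounds on the $s$-integral that correctly distinguish diagonal from off-diagonal entries of $X$: the naive product bound on the fluctuation gives $(\log n)^\kappa/\eta$, and improving this to $(\log n)^\kappa/\sqrt{\eta}$ requires exploiting averaging of the fluctuation $\Iprod{e_k}{G_A(z)\bone}-m_{\mathrm{sc}}(z)$ against the decaying kernel $\eta/((s-\lambda_i)^2+\eta^2)$ in $s$, together with a covariance estimate that captures the quantum-unique-ergodicity cancellation for $k\ne\ell$.
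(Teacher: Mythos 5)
Your overall architecture matches the paper's: resolvent representation of $X$ via contour/real-line integral, entrywise analysis of the row sums $e_k^\top R_A(\cdot)\bone$ via local laws for sparse Wigner resolvents, $\sigma/\eta^2$ error from transferring between $R_A$ and $R_B$, and $r(n)/\eta$ from the local-law bias. But there is a genuine gap in the step where you need to produce the diagonal value $(1-\sigma^2)/\eta$ and simultaneously show the off-diagonal entries are only $(\log n)^\kappa/\sqrt\eta$.

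The paper's mechanism is a \emph{Schur complement decomposition} (Lemma \ref{lmm:RSRapprox}), which writes $e_k^\top R_A(z)\bone \approx m_0(z)\bigl(1 - a_k^\top R_A^{(k\ell)}(z)\bone\bigr)$ and similarly for $R_B$, thereby isolating the explicit dependence of $X_{k\ell}$ on the $k$-th row $a_k$ of $A$ and the $\ell$-th row $b_\ell$ of $B$. After multiplying and integrating, the entry reduces to $a_k^\top g + b_\ell^\top h + a_k^\top M b_\ell$ plus deterministic terms, where $g,h,M$ are contour integrals of minor resolvents independent of $(a_k,b_\ell)$. For off-diagonal $k\ne\ell$, $a_k$ and $b_\ell$ are independent, so the bilinear form $a_k^\top M b_\ell$ concentrates around $0$. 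For diagonal $k=\ell$, the correlated pair $(a_k,b_k)$ yields $\E[a_k^\top M b_k]=\frac{1-\sigma^2}{n}\Tr M$ (Lemma \ref{lmm:bilinear}), and $\frac{1}{n}\Re\Tr M \approx 2\pi/\eta$ (Lemmas \ref{lmm:traceM}, \ref{lmm:m_product}). This is the entire source of the $(1-\sigma^2)/\eta$ diagonal — it is the \emph{mean} of the bilinear form, not a ``variance'' or a QUE-type cancellation as you describe. Your claim that the transfer $G_B \to G_A$ via $\|E\|\le C\sigma$ combined with the pointwise covariance condition ``produces an overall prefactor of $1-\sigma^2$'' does not hold up: that transfer only contributes an additive $O(\sigma/\eta^2)$ error and says nothing about the factor $1-\sigma^2$. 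The $1-\sigma^2$ enters only once the dependence on the correlated rows is exposed explicitly.

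Relatedly, you correctly flag that a naive isotropic-law product bound gives $\polylog(n)/\eta$ for off-diagonal entries and that one must do better, but your proposed remedy (``averaging against the kernel $\eta/((s-\lambda_i)^2+\eta^2)$'' plus a QUE covariance estimate) is not a workable plan in this setting. The improvement to $\polylog(n)/\sqrt\eta$ in the paper comes from the same Schur decomposition combined with $\ell_2$-bounds on $g,h,M$ obtained by deforming to a nested inner contour $\Gamma'$ and applying the resolvent identity $R(z)R(w)=(R(z)-R(w))/(z-w)$ together with the sum bounds of Theorem \ref{thm:locallawwigner} — in particular $\|M\|_F^2 \lesssim n^2/\eta$ and $\|g\|^2,\|h\|^2 \lesssim n\log(1/\eta)$ (Lemma \ref{lmm:M_norm}). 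There is no invocation of QUE or of an isotropic local law anywhere in the proof; Theorem \ref{thm:locallawwigner} is a non-isotropic entrywise/row-sum/total-sum local law, which is weaker and is exactly what the Schur decomposition needs. Without the Schur decomposition, your route has no concrete way to separate $k=\ell$ from $k\ne\ell$ or to produce the $1-\sigma^2$ prefactor, and I do not see how the pieces you list would assemble into a proof.
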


As an immediate corollary, we obtain the following exact recovery guarantee for GRAMPA.
\begin{corollary}[Universal graph matching]\label{cor:general}
Under the conditions of Theorem \ref{thm:diag-dom},
there exist constants $c,c' >0$ such that for all $n \ge n_0$, if  
\begin{align}
(\log n)^{-a} \le \eta \le c (\log n)^{-2\kappa } \quad \text{ and }  \quad
\sigma \le c'\eta, \label{eq:assump_sigma_main}
\end{align}
then with probability at least $1-n^{- 10}$, 
\begin{align}
\min_k X_{k \pi_*(k) }>\max_{\ell \neq \pi_*(k)} X_{k\ell}, \label{eq:wigner_separation}
\end{align}
and hence $\hat{\Pi}$ which solves the linear assignment problem
(\ref{eq:linearassignment}) equals $\Pi_*$.
\end{corollary}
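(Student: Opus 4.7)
The plan is to derive Corollary~\ref{cor:general} directly from the quantitative estimates in Theorem~\ref{thm:diag-dom}. On the high-probability event on which \prettyref{eq:diagonal-wigner-special} holds, the second bound yields the one-sided estimate
\[
\min_k X_{k\pi_*(k)} \;\ge\; \frac{1-\sigma^2}{\eta} \;-\; C\left(\frac{r(n)}{\eta} + \frac{\sigma}{\eta^2} + \frac{(\log n)^\kappa}{\sqrt{\eta}}\right),
\]
while the first bound gives $\max_{\ell\neq\pi_*(k)} X_{k\ell} \le C(\log n)^\kappa/\sqrt{\eta}$ uniformly in $k$. Thus \prettyref{eq:wigner_separation} follows as soon as the difference of these two quantities is strictly positive; after multiplying through by $\eta$, this reduces to the scalar inequality
\[
1-\sigma^2 \;>\; C\,r(n) + \frac{C\sigma}{\eta} + 2C(\log n)^\kappa \sqrt{\eta}.
\]

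The next step is to verify this inequality under the two hypotheses in \prettyref{eq:assump_sigma_main}. Since $r(n)\to 0$ as $n\to\infty$ by Theorem~\ref{thm:diag-dom}, the first term on the right is $o(1)$; the constraint $\sigma\le c'\eta$ both bounds the second term by $Cc'$ and forces $\sigma^2 \le (c')^2$; and the constraint $\eta \le c(\log n)^{-2\kappa}$ yields $(\log n)^\kappa\sqrt{\eta}\le\sqrt{c}$, bounding the third term by $2C\sqrt{c}$. Choosing $c$ and $c'$ sufficiently small (depending only on $a$, $\kappa$, and the constant $C$ already fixed by Theorem~\ref{thm:diag-dom}) makes the right-hand side strictly less than $1-\sigma^2$, so that $\min_k X_{k\pi_*(k)} > \max_{\ell\ne\pi_*(k)} X_{k\ell}$ for all $k$, establishing \prettyref{eq:wigner_separation}.

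Finally, I would upgrade this row-wise dominance to the LAP guarantee. The inequality \prettyref{eq:wigner_separation} says that $\pi_*(k)$ is the unique argmax of the $k$-th row of $X$, so for any permutation $\pi\ne\pi_*$ there exists at least one index $k$ with $\pi(k)\ne\pi_*(k)$, contributing $X_{k\pi(k)} < X_{k\pi_*(k)}$, while every other row contributes $X_{k'\pi(k')} \le X_{k'\pi_*(k')}$. Summing yields $\iprod{X}{\Pi} < \iprod{X}{\Pi_*}$, so $\Pi_*$ is the unique maximizer in \prettyref{eq:linearassignment} and hence $\hat\Pi = \Pi_*$.

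Almost all of the analytical work is already packaged in Theorem~\ref{thm:diag-dom}, so the main obstacle here is parameter bookkeeping: the three error contributions $r(n)$, $C\sigma/\eta$, and $2C(\log n)^\kappa\sqrt{\eta}$ have different $(\sigma,\eta)$-dependence, and one must check that the assumptions \prettyref{eq:assump_sigma_main} dominate all three simultaneously. This is exactly what the upper bound $\eta \le c(\log n)^{-2\kappa}$ (controlling the $\sqrt{\eta}$ term) and $\sigma \le c'\eta$ (controlling the $\sigma/\eta$ term) achieve; the sub-Gaussian refinement $\kappa=1$ is obtained by the same argument applied to the improved version of Theorem~\ref{thm:diag-dom}.
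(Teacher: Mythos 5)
Your proposal is correct and follows essentially the same argument as the paper: both reduce \prettyref{eq:wigner_separation} to the observation that under \prettyref{eq:assump_sigma_main} each of the three error terms $Cr(n)/\eta$, $C\sigma/\eta^2$, and $C(\log n)^\kappa/\sqrt\eta$ is a small multiple of $1/\eta$, so that the diagonal $(1-\sigma^2)/\eta$ dominates the off-diagonal bound $C(\log n)^\kappa/\sqrt\eta$. The paper makes the choice of $c$ and $c'$ explicit ($c=1/(64C^2)$, $c'=1/(2C)$) and tallies the terms against $1/(8\eta)$, whereas you leave the constants implicit; you also spell out the standard step that diagonal dominance of $X$ forces the LAP maximizer to be $\Pi_*$, which the paper asserts without comment. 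Neither difference is substantive.
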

\begin{proof}
Let $c=1/(64C^2)$ and $c' = 1/(2C)$, where $C$ is the constant given in \prettyref{thm:diag-dom}.  
Then under assumption \prettyref{eq:assump_sigma_main}, 
we have
$$
C (\log n)^{\kappa}  \sqrt{\eta}  
\le C (\log n)^{\kappa}  \frac{ \sqrt{c}  } { (\log n)^{\kappa} } 
=C \sqrt{c} \le 1/8,
$$
so $\max_{\ell \neq \pi_*(k)} |X_{k\ell}| \leq 1/(8\eta)$.
We also have $C\sigma/\eta \le C c'=1/2$
and $1-\sigma^2>7/8$ and $Cr(n)<1/8$ for all large $n$, so that
$\max_k X_{k\pi_*(k)}>(7/8-1/8-1/2-1/8)/\eta>1/(8\eta)$.
This implies $\prettyref{eq:wigner_separation}$.
\end{proof}




An important application of the above universality result is matching two correlated sparse Erd\H{o}s-R\'enyi graphs. 
Let $G$ be an Erd\H{o}s-R\'enyi graph with $n$ vertices and edge probability $q$, denoted by $G \sim \sG(n, q)$. Let $\graphA$ and $\graphB'$ be two copies of Erd\H{o}s-R\'enyi graphs that are i.i.d.~conditional on $G$, each of which is obtained from $G$ by deleting every edge of $G$ with probability $1-s$ independently where $s \in [0,1]$. Then we have that $\graphA, \graphB' \sim \sG(n, p)$ marginally where $p \defn qs$. 
Equivalently, we may first sample an Erd\H{o}s-R\'enyi graph $\graphA \sim \sG(n, p)$, and then define $\graphB'$ by
$$
\graphB'_{ij} \sim 
\begin{cases}
\Bern(s) & \text{ if } \graphA_{ij} = 1 \\
\Bern\Big(\frac{p(1-s)}{1-p} \Big) & \text{ if } \graphA_{ij} = 0 .
\end{cases}
$$
Suppose that we observe a pair of graphs $\graphA$ and $\graphB  = \Pi_*^\top
\graphB' \Pi_*$, where $\Pi_*$ is an unknown permutation matrix. We then wish to
recover the permutation matrix $\Pi_*$. 

We transform the adjacency matrices $\graphA$
and $\graphB$ so that they satisfy the moment conditions \eqref{eq:centerscale}
and \eqref{eq:momentcond}:
Define the centered, rescaled versions of $\graphA$ and $\graphB$ by
\begin{align}
A \defn (np(1-p))^{-1/2} (\graphA - \E[\graphA])
\quad \text{ and } \quad
B \defn (np(1-p))^{-1/2} (\graphB - \E[\graphB]) .
\label{eq:transformgraph}
\end{align} 
Then (\ref{eq:centerscale}) clearly holds, and we check the following
additional properties.
\begin{lemma} \label{lmm:AB}
For all large $n$, the matrices $A = (a_{ij})$ and $B = (b_{ij})$ satisfy
\eqref{eq:momentcond}, \eqref{eq:correlation}, and \prettyref{eq:diffnorm}
 with $d=np(1-p)$ and
\[\sigma^2=\max\left(\frac{1-s}{1-p},\frac{(\log n)^7}{d}\right).\]
\end{lemma}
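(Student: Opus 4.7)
The plan is to verify the three conditions \eqref{eq:momentcond}, \eqref{eq:correlation}, and \eqref{eq:diffnorm} separately, each by a short direct calculation together with standard random matrix facts.

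For \eqref{eq:momentcond}, the key observation is that entries of $A$ are deterministically bounded: since $|\graphA_{ij}-p|\le \max(p,1-p)\le 1$, we have $|a_{ij}|\le (np(1-p))^{-1/2}=1/\sqrt{d}$. Combined with $\E[a_{ij}^2]=1/n$ from \eqref{eq:centerscale}, this gives $\E[|a_{ij}|^k]\le d^{-(k-2)/2}\E[a_{ij}^2]=\frac{1}{n\,d^{(k-2)/2}}$ for all $k\ge 2$, which is \eqref{eq:momentcond} with $C=1$. The same holds for $b_{ij}$.

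For \eqref{eq:correlation}, I would use the edge-subsampling description. Because $\graphB = \Pi_*^\top \graphB' \Pi_*$, conjugation gives $\Pi_* B \Pi_*^\top = (np(1-p))^{-1/2}(\graphB'-p(\bJ-\bI))$, so $b_{\pi_*(i)\pi_*(j)}=(np(1-p))^{-1/2}(\graphB'_{ij}-p)$. Conditional on the common parent $G_{ij}\sim\Bern(q)$ with $q=p/s$, the edges $\graphA_{ij}$ and $\graphB'_{ij}$ are independent $\Bern(s)$, so
\begin{equation*}
\E[\graphA_{ij}\graphB'_{ij}]=q s^2=ps,\qquad
\mathrm{Cov}(\graphA_{ij},\graphB'_{ij})=ps-p^2=p(s-p).
\end{equation*}
Dividing by $np(1-p)$ gives $\E[a_{ij}b_{\pi_*(i)\pi_*(j)}]=\frac{1}{n}\cdot\frac{s-p}{1-p}=\frac{1}{n}\bigl(1-\frac{1-s}{1-p}\bigr)$, which is $\ge \frac{1-\sigma^2}{n}$ under either branch of the $\max$ defining $\sigma^2$.

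For \eqref{eq:diffnorm}, the same conjugation identity gives
\begin{equation*}
A-\Pi_* B \Pi_*^\top=(np(1-p))^{-1/2}\,(\graphA-\graphB'),
\end{equation*}
so it suffices to bound $\|\graphA-\graphB'\|$. The matrix $M\defn \graphA-\graphB'$ is symmetric with independent entries on and above the diagonal, each bounded by $1$ in magnitude, mean zero, and variance $2p(1-s)$ (nonzero only when the parent edge is present, giving $\pm 1$ each with probability $ps(1-s)$). I would invoke a large-deviation bound for the spectral norm of sparse symmetric matrices with bounded independent entries, e.g.\ a matrix-Bernstein or Bandeira--van Handel type inequality, to conclude that for every $D>0$ there is $C_D$ with
\begin{equation*}
\P\bigl\{\|M\|\le C_D\bigl(\sqrt{np(1-s)}+(\log n)^{7/2}\bigr)\bigr\}\ge 1-n^{-D}
\end{equation*}
for all large $n$. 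Dividing by $\sqrt{np(1-p)}=\sqrt{d}$ yields
\begin{equation*}
\|A-\Pi_* B\Pi_*^\top\|\lesssim \sqrt{\tfrac{1-s}{1-p}}+\sqrt{\tfrac{(\log n)^{7}}{d}}\le 2\sigma,
\end{equation*}
which is \eqref{eq:diffnorm}.

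The only genuine technical point is the last step: obtaining the spectral-norm tail bound with probability $1-n^{-D}$ for arbitrary $D$, since in the sparse regime $d\asymp\polylog(n)$ the $\sqrt{np(1-s)}$ term alone need not dominate and one must control the contribution of high-degree vertices. The buffer $(\log n)^7$ in the definition of $\sigma^2$ is precisely what makes the additive correction from such large-deviation bounds negligible relative to $\sigma$, so the main obstacle is essentially bookkeeping once the right concentration inequality is cited.
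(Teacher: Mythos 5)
Your verification of \eqref{eq:momentcond} and \eqref{eq:correlation} is correct. For \eqref{eq:momentcond} you use the deterministic bound $|a_{ij}|\le d^{-1/2}$ together with $\E[a_{ij}^2]=1/n$; the paper instead computes $\E|a_{ij}|^k$ in closed form. Both routes give $\E|a_{ij}|^k \le 1/(nd^{(k-2)/2})$, so this is essentially the same argument. Your computation for \eqref{eq:correlation} matches the paper's (and correctly has the direction $\ge$, which the paper's display misprints as $\le$).

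For \eqref{eq:diffnorm} you take a genuinely different route. The paper normalizes the difference to $\Delta \defn (A-\Pi_*B\Pi_*^\top)/\sqrt{2\sigma^2}$, checks that $\Delta$ satisfies \eqref{eq:centerscale}--\eqref{eq:momentcond} with effective sparsity $2\sigma^2 d \ge 2(\log n)^7$, and invokes \prettyref{lmm:normbound} (the EKYY sparse norm bound) to conclude $\|\Delta\|\le 2+o(1)$, hence $\|A-B\|\lesssim\sigma$. You instead bound the raw binary-valued difference $M=\graphA-\graphB'$. This is a fine alternative, but one of the two tools you name does not work: matrix Bernstein gives only $\|M\|\lesssim \sqrt{\nu\log n}+\log n$ with $\nu\asymp np(1-s)$, and after dividing by $\sqrt{d}$ the leading term becomes $\sqrt{\log n}\cdot\sqrt{(1-s)/(1-p)}$, which in the regime where $(1-s)/(1-p)$ dominates $(\log n)^7/d$ equals $\sqrt{\log n}\cdot\sigma$ --- a factor $\sqrt{\log n}$ too large. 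You need a bound with the sharp leading term $\sqrt{\nu}$ (no extra $\sqrt{\log n}$), which Bandeira--van Handel (or the paper's \prettyref{lmm:normbound}, or Lata\l{}a--van Handel--Youssef) does provide: $\|M\|\le C\sqrt{np(1-s)} + C'\sqrt{D\log n}$ with probability $1-n^{-D}$, with $C$ an absolute constant. One more remark: \eqref{eq:diffnorm} requires the constant $C$ to be independent of $D$, whereas you write $C_D$. This is in fact fine here because the $D$-dependence can be confined to the $\sqrt{\log n}$ correction, which after dividing by $\sqrt{d}$ is $O(\sigma/(\log n)^3)$ and absorbed by taking $n\ge n_0(D)$ --- but this deserves to be said explicitly rather than silently replacing $C_D$ by $\lesssim$ in the final display.
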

\begin{proof}
Assume without loss of generality that $\Pi_*$ is the identity matrix. 
For any $k \geq 2$ we have
$$
\expect{\left|a_{ij}\right|^k} = (np(1-p))^{-k/2} 
\left[ p \left( 1- p \right)^k  +(1-p) p^k \right]
=\frac{  \left( 1- p \right)^{k-1} + p^{k-1} }{ n d^{(k-2)/2 } }
\le \frac{1}{n d^{(k-2)/2 } }.
$$
Thus, the moment condition \eqref{eq:momentcond} is satisfied. 
In addition, we have that for all $i<j$,
\begin{align*}
\expect{a_{ij}b_{ij}} &= \frac{1}{d} \expect{ \left( \graphA_{ij} - p \right) 
\left(\graphB_{ij} - p \right)}
= \frac{1}{d} \left( p s -p^2  \right) = \frac{s-p}{n(1-p)} \leq 
\frac{1-\sigma^2}{n},
\end{align*}
where the last equality holds by the choice of $\sigma^2$. 
Thus, \eqref{eq:correlation} is satisfied. 
Moreover, 
let 
$
\Delta_{ij} = \frac{1}{\sqrt{2\sigma^2}} \left( a_{ij}-b_{ij} \right).
$
It follows that $\expect{\Delta_{ij}}=0$ and 
$$
\expect{\left|\Delta_{ij}\right|^k}   = \frac{2 p (1-s)}{\left(2\sigma^2d \right)^{k/2}}
\leq \frac{1}{n (2\sigma^2d)^{(k-2)/2}}
$$
where the last inequality is due to $\sigma^2 \geq \frac{1-s}{1-p}$.
Thus, by applying \prettyref{lmm:normbound} and
$2(\log n)^7 \le 2\sigma^2 d \le n$ where the upper bound follows from $p(1-s) \leq s(1-s)
\leq 1/4$, there exists a constant $C>0$ such that for any $D>0$,
with probability at least $1-n^{-D}$ for all $n \geq n_0(D)$,
we have $\|\Delta\| \le C $ and hence $\|A-B\| \le \sqrt{2} C \sigma$.
Thus \prettyref{eq:diffnorm} is satisfied.
\end{proof}

Combining \prettyref{lmm:AB} with \prettyref{cor:general} immediately yields 
a sufficient condition for GRAMPA to exactly recover $\Pi_*$ in the correlated Erd\H{o}s-R\'enyi graph model.
\begin{corollary}[Erd\H{o}s-R\'enyi graph matching]
Suppose that either
\begin{enumerate}[(a)]
\item 
(dense case)
\[\delta \le p \le 1-\delta, \qquad \frac{1-s}{1-p} \leq (\log n)^{-c_1}\]
for constants $\delta \in (0,1)$ and $c_1>4$, or
\item
(sparse case)
\[np(1-p) \ge (\log n)^{c_0}, \qquad \frac{1-s}{1-p} \leq (\log n)^{-c_1}\]
for constants $c_0>48$ and $c_1>8$.
\end{enumerate}
There exist
$(\delta,c_0,c_1)$-dependent constants $a,n_0>0$ such that if
$\eta=(\log n)^{-a}$ and
$n \geq n_0$, then with probability at least $1-n^{-10}$,
\begin{align*}
\min_k X_{k\pi_*(k)}>\max_{\ell \neq \pi_*(k)} X_{k\ell},
\end{align*}
and hence the solution $\hat{\Pi}$ to the linear assignment problem
(\ref{eq:linearassignment}) coincides with $\Pi_*$.
\end{corollary}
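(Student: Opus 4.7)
The plan is to apply \prettyref{cor:general} to the centered, rescaled pair $(A,B)$ defined in \prettyref{eq:transformgraph}, after using \prettyref{lmm:AB} to place $(A,B)$ in the correlated Wigner model. By \prettyref{lmm:AB}, this placement succeeds with sparsity parameter $d=np(1-p)$ and effective noise $\sigma^2=\max\bigl(\tfrac{1-s}{1-p},\tfrac{(\log n)^7}{d}\bigr)$. The hypothesis $(1-s)/(1-p) \le (\log n)^{-c_1}$, combined with the lower bound on $d$ in each case---namely $d \asymp n$ when $p\in[\delta,1-\delta]$, and $d \ge (\log n)^{c_0}$ with $c_0 > 48 > 7+c_1$ in the sparse case---forces $\sigma \le (\log n)^{-c_1/2}$ for all sufficiently large $n$. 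The remaining work is to select $a$ and $\kappa$ so that the strict inequalities in \prettyref{thm:diag-dom} and \prettyref{cor:general} can be met simultaneously.

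In the dense case, $p \in [\delta,1-\delta]$ makes every entry of $A$ and $B$ deterministically bounded by $(n\delta(1-\delta))^{-1/2}$, hence sub-Gaussian with $\|\cdot\|_{\psi_2} = O(1/\sqrt n)$, so the $\kappa=1$ version of \prettyref{cor:general} applies. I pick any $a \in (2,c_1/2)$---nonempty because $c_1>4$---and set $\eta=(\log n)^{-a}$; then $\eta \le c(\log n)^{-2\kappa}$ and $\sigma \le (\log n)^{-c_1/2} \le c'\eta$ for large $n$, and the moment constraint $c_0 > \max(32+4a,4+7a)$ from \prettyref{thm:diag-dom} is automatic since $d \asymp n$. \prettyref{cor:general} then gives the conclusion.

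In the sparse case, sub-Gaussianity at scale $1/\sqrt n$ fails, so I must use $\kappa>2$. With $\eta=(\log n)^{-a}$, the constraints of \prettyref{cor:general} force $2\kappa \le a \le c_1/2$, while \prettyref{thm:diag-dom} forces $c_0 > \max(32+4a,\,4+7a) = 32+4a$ for $a < 28/3$. I choose $a = 4+\epsilon$ and $\kappa = 2+\epsilon/4$ with $\epsilon>0$ small enough that $32+4a < c_0$ and $4+\epsilon \le c_1/2$; this is possible precisely because $c_0>48$ and $c_1>8$. The main (and really only) obstacle is this bookkeeping: the thresholds $c_0>48$, $c_1>8$ in the hypothesis are tuned so that the system $c_0 > 32+4a$, $2\kappa \le a \le c_1/2$, $\kappa>2$ admits a solution with a sliver of slack, and everything else is a direct unpacking of \prettyref{cor:general}.
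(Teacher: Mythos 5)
Your proposal is correct and follows essentially the same route as the paper: invoke \prettyref{lmm:AB} to place the centered, rescaled adjacency matrices in the correlated Wigner model with $d=np(1-p)$ and $\sigma^2=\max\bigl(\tfrac{1-s}{1-p},\tfrac{(\log n)^7}{d}\bigr)$, take $\kappa=1$ in the dense case (bounded entries are sub-Gaussian at scale $1/\sqrt n$) and $\kappa>2$ in the sparse case, and then choose $a$ so that the constraints of \prettyref{thm:diag-dom} and \prettyref{cor:general} hold, exactly as in the paper's choice $c_1/2>a>2\kappa$ with $c_0>32+4a>4+7a$. One small inaccuracy: your justification ``$c_0>48>7+c_1$'' is not implied by the hypotheses (only $c_1>8$ is assumed, so $c_1$ may exceed $41$), and then the intermediate claim $\sigma\le(\log n)^{-c_1/2}$ can fail; however, this is harmless, since with your choice $a=4+\epsilon$ the second branch of $\sigma^2$ satisfies $(\log n)^{7-c_0}\le(\log n)^{-41}\ll \eta^2=(\log n)^{-2a}$, so the required bound $\sigma\le c'\eta$ still holds and the argument goes through.
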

\begin{proof}
For (a), pick $\kappa=1$ and any $a$ such that $c_1/2>a>2\kappa=2$. For (b),
pick any $a,\kappa$ such that $c_1/2>a>2\kappa>4$ and $c_0>32+4a>4+7a$.
Then all conditions of \prettyref{thm:diag-dom} and \prettyref{cor:general}
are satisfied for large $n$, and the result follows.
\end{proof}

\section{Resolvent representation} \label{sec:res}
For a real symmetric matrix $A$ with spectral decomposition~\eqref{eq:eig-dec}, its resolvent is defined by
$$
R_A(z) \defn (A-z \bI )^{-1}=\sum_i \frac{1}{\lambda_i-z}v_iv_i^\top 
$$
for $z \in \C \setminus \R$. Then we have the matrix symmetry $R_A(z)^\top=R_A(z)$, conjugate symmetry
$\overline{R_A(z)}=R_A(\bar{z})$, and the following Ward identity. 

\begin{lemma}[Ward identity] \label{lem:ward}
For any $z \in \C \setminus \R$ and any real symmetric matrix $A$,
\[R_A(z)\overline{R_A(z)} =\frac{\Im R_A(z) }{\Im z}.\]
\end{lemma}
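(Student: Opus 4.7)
The plan is to use the spectral decomposition of $R_A(z)$ directly, since $A$ is real symmetric and both sides of the identity are expressed in terms of the same eigenbasis. First I would write $R_A(z) = \sum_i (\lambda_i - z)^{-1} v_i v_i^\top$, and observe that since the eigenvalues $\lambda_i$ are real, the conjugate symmetry $\overline{R_A(z)} = R_A(\bar z) = \sum_i (\lambda_i - \bar z)^{-1} v_i v_i^\top$ holds. Using orthonormality $v_i^\top v_j = \delta_{ij}$, the product $R_A(z)\overline{R_A(z)}$ collapses to $\sum_i |\lambda_i - z|^{-2} v_i v_i^\top$.

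Next I would compute the right-hand side by taking imaginary parts scalar-wise:
\[
\Im R_A(z) = \sum_i \Im\!\left(\frac{1}{\lambda_i - z}\right) v_i v_i^\top = \sum_i \frac{\Im z}{|\lambda_i - z|^2} v_i v_i^\top,
\]
so dividing by $\Im z$ (which is nonzero since $z \notin \R$) yields exactly $\sum_i |\lambda_i - z|^{-2} v_i v_i^\top$, matching $R_A(z)\overline{R_A(z)}$.

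An alternative, basis-free approach would be to invoke the resolvent identity $R_A(z) - R_A(w) = (z-w)\,R_A(z)R_A(w)$, apply it with $w = \bar z$, and combine with $\overline{R_A(z)} = R_A(\bar z)$ together with $\Im R_A(z) = (R_A(z) - R_A(\bar z))/(2\iu)$ and $\Im z = (z-\bar z)/(2\iu)$. Either route is essentially a one-line computation, so I do not anticipate any real obstacle; the only care needed is in justifying $\overline{R_A(z)} = R_A(\bar z)$, which uses the reality of $A$ (hence of its eigendecomposition).
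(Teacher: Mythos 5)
Both of your routes are correct. Your \emph{alternative} basis-free computation is exactly the paper's proof: the paper writes $\Im R(z)/\Im z = (R(z)-\overline{R(z)})/(z-\bar z)$, expands $R(z)-R(\bar z)$ via the resolvent identity, and recognizes the product $(A-z\bI)^{-1}(A-\bar z\bI)^{-1}=R(z)\overline{R(z)}$, using conjugate symmetry implicitly throughout. Your \emph{primary} route via the spectral decomposition is a genuinely different (though equivalent) presentation: you diagonalize simultaneously in the eigenbasis of $A$ and reduce the identity to the scalar fact $\Im\bigl((\lambda-z)^{-1}\bigr) = \Im z\,/\,|\lambda-z|^2$. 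The spectral version is perhaps more transparent because it makes clear that the identity is literally a scalar statement applied eigenvalue-by-eigenvalue, but it presupposes diagonalizability; the paper's algebraic route needs only that $A-z\bI$ and $A-\bar z\bI$ commute (automatic here) and that $\overline{R_A(z)} = R_A(\bar z)$, which as you note rests on $A$ being real. For the symmetric matrices in this paper the two proofs are equivalent in strength.
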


\begin{proof}
By the definition of $R(z) \equiv R_A(z)$ and conjugate symmetry, it holds
\begin{align*}
\frac{\Im R(z) }{\Im z} = \frac{ R(z) - \overline{ R(z) } }{ z - \bar{z}} 
= \frac{ (A-z \bI )^{-1} - (A- \bar{z} \bI )^{-1} }{ z - \bar{z}} 
= (A-z \bI )^{-1} (A- \bar{z} \bI )^{-1} 
= R(z) \overline{ R(z) } .
\end{align*}
\end{proof}

The following resolvent representation of $X$ is central to our analysis. 

\begin{proposition} \label{prop:res-rep}
Consider symmetric matrices $A$ and $B$ with spectral decompositions~\eqref{eq:eig-dec}, and suppose that $\|A\| \le 2.5$. Then the matrix $X$ defined in \prettyref{eq:specrep} admits the following representation
\begin{align}
X = \frac{1}{2\pi} \Re \oint_{\Gamma}
R_A(z) \bJ R_B(z+ \iu \eta) dz ,
\label{eq:resrep}
\end{align}
where 
\begin{equation}
\Gamma = \{z: |\Re z| = 3 \text{ and }
|\Im z| \leq \eta/2
\;\;\text{ or } \;\;
|\Im z| = \eta/2 \text{ and } |\Re z| \leq 3\}\label{eq:Gamma}
\end{equation}
is the rectangular contour with vertices $\pm 3 \pm \iu \eta/2$.
\end{proposition}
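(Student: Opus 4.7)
The plan is to reduce the matrix-valued identity to a scalar contour integral by exploiting the spectral decompositions, and then evaluate the scalar integral by the residue theorem.

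First I would substitute the spectral decompositions $A = \sum_i \lambda_i v_i v_i^\top$ and $B = \sum_j \mu_j w_j w_j^\top$ into the integrand. This yields
\[
R_A(z)\bJ R_B(z+\iu\eta) = \sum_{i,j} \frac{1}{(\lambda_i - z)(\mu_j - z - \iu\eta)}\, v_iv_i^\top \bJ w_jw_j^\top.
\]
Since the sum has only $n^2$ terms and the projectors $v_iv_i^\top \bJ w_jw_j^\top$ do not depend on $z$, I can exchange the finite sum with the contour integral. Comparing with the definition of $X$ in \prettyref{eq:specrep}, the proposition reduces to verifying the scalar identity
\[
\frac{1}{2\pi}\,\Re \oint_{\Gamma} \frac{dz}{(\lambda_i - z)(\mu_j - z - \iu\eta)} = \frac{\eta}{(\lambda_i - \mu_j)^2 + \eta^2}
\]
for each fixed pair $(i,j)$.

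Next I would identify the poles of the integrand and check which lie inside $\Gamma$. The two simple poles are at $z = \lambda_i$ and $z = \mu_j - \iu\eta$. The assumption $\|A\| \le 2.5$ forces $\lambda_i \in [-2.5, 2.5] \subset (-3, 3)$, so $\lambda_i$ lies on the real axis strictly inside the rectangle $\Gamma$ with vertices $\pm 3 \pm \iu\eta/2$. On the other hand, $\mu_j - \iu\eta$ has imaginary part $-\eta$, whose magnitude exceeds $\eta/2$; therefore this pole lies outside $\Gamma$ regardless of the value of $\mu_j$ (no bound on $\|B\|$ is needed). Orienting $\Gamma$ counterclockwise and applying the residue theorem,
\[
\oint_{\Gamma} \frac{dz}{(\lambda_i - z)(\mu_j - z - \iu\eta)} = 2\pi \iu \cdot \mathrm{Res}_{z=\lambda_i} = \frac{2\pi \iu}{\lambda_i - \mu_j + \iu\eta},
\]
using that $\mathrm{Res}_{z=\lambda_i}\frac{1}{\lambda_i - z} = -1$ and that the second factor is analytic at $z = \lambda_i$.

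Finally, dividing by $2\pi$, rationalizing the denominator, and taking real parts gives
\[
\frac{1}{2\pi}\,\Re\frac{2\pi\iu}{\lambda_i - \mu_j + \iu\eta} = \Re\frac{\iu(\lambda_i - \mu_j - \iu\eta)}{(\lambda_i - \mu_j)^2 + \eta^2} = \frac{\eta}{(\lambda_i - \mu_j)^2 + \eta^2},
\]
which is exactly the scalar identity required. There is essentially no technical obstacle here; the only subtlety is verifying that $\Gamma$ separates the $A$-poles from the shifted $B$-poles, which is precisely the role played by the $2.5 < 3$ real bound from $\|A\| \le 2.5$ and by the $\eta/2$ vertical offset of $\Gamma$ relative to the shift $-\iu\eta$ in $R_B(z+\iu\eta)$.
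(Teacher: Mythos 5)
Your proof is correct, but it takes a slightly different route from the paper's. You expand both resolvents spectrally and reduce the claim to the scalar residue computation $\frac{1}{2\pi}\Re\oint_\Gamma \frac{dz}{(\lambda_i-z)(\mu_j-z-\iu\eta)} = \frac{\eta}{(\lambda_i-\mu_j)^2+\eta^2}$, using that the only pole inside $\Gamma$ is $z=\lambda_i$ (since $\|A\|\le 2.5<3$) while the shifted pole $\mu_j-\iu\eta$ lies below the strip $|\Im z|\le\eta/2$. The paper instead first applies the Ward identity $R_B(\lambda_i+\iu\eta)R_B(\lambda_i-\iu\eta)=\eta^{-1}\Im R_B(\lambda_i+\iu\eta)$ to collapse the kernel $\frac{\eta}{(\lambda_i-\mu_j)^2+\eta^2}$ into the imaginary part of a single resolvent, and then invokes the Cauchy integral formula for the matrix-valued function $f(z)=\bJ R_B(z+\iu\eta)$, which is analytic on $\{\Im z>-\eta\}$ — the same fact that makes your $B$-poles fall outside $\Gamma$. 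Your term-by-term residue calculation is more elementary and transparent about exactly which poles contribute; the paper's Ward-identity route avoids the pairwise scalar bookkeeping and works directly with the matrix-valued analytic function, which is the viewpoint reused later (e.g.\ in the constrained-QP representation of Lemma~\ref{prop:res-rep-c}). One small point you leave implicit: pulling $\Re$ inside the double sum onto the scalar integrals uses that the matrices $v_iv_i^\top \bJ w_jw_j^\top$ are real, which holds because $A$ and $B$ are real symmetric so the eigenvectors in \eqref{eq:eig-dec} may be taken real; it is worth a sentence but is not a gap.
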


\begin{proof}
We have 
\begin{align} 
X &= \eta \sum_{i, j} v_i v_i^\top \bJ \frac{ w_j w_j^\top }{ (\lambda_i - \mu_j)^2 + \eta^2 }  \nonumber \\
&= \eta \sum_{i} v_i v_i^\top \bJ R_B( \lambda_i + \iu \eta) R_B( \lambda_i - \iu \eta) \nonumber \\
&= \Im \sum_i v_i v_i^\top \bJ R_B(\lambda_i+ \iu \eta)  
\label{eq:Xkl-0}
\end{align}
by Lemma~\ref{lem:ward}. 
Consider the function $f: \C \to \C^{n \times n}$ defined by $f(z)= \bJ R_B(z+ \iu \eta) $. Then each entry $f_{k\ell}$ is analytic in the region
$\{z:\Im z>-\eta\}$. Since $\Gamma$ encloses each eigenvalue $\lambda_i$ of
$A$, the Cauchy integral formula yields entrywise equality
\begin{align}
-\frac{1}{2\pi \iu }\oint_{\Gamma} \frac{f(z)}{\lambda_i-z}dz=f(\lambda_i) . 
\label{eq:cauchy_integral_formula}
\end{align}
Substituting this into~\eqref{eq:Xkl-0},
we obtain 
\begin{align}
X = \Im \sum_i  v_i v_i^\top \left(-\frac{1}{2\pi \iu }
\oint_{\Gamma} \frac{f(z)}{\lambda_i-z}dz\right)
=\frac{1}{2\pi} \Re \oint_{\Gamma} R_A(z) f(z)  dz ,
\label{eq:contour-rep}
\end{align}
which completes the proof in view of the definition of $f$.
\end{proof}

\section{Tools from random matrix theory}
\label{sec:rmt}

Before proving our main results, we introduce the relevant tools from random matrix theory. In particular, the resolvent bounds in Theorem~\ref{thm:locallawwigner}
constitute an important technical ingredient in our analysis. 

\subsection{Concentration inequalities}

We start with some known concentration inequalities in the literature. 

\begin{lemma}[Norm bounds]\label{lmm:normbound}
For any constant $\eps>0$ and a universal constant $c>0$,
if $n \ge d \ge (\log n)^{6+6\eps}$,
then with probability at least $1-e^{-c(\log n)^{1+\eps}}$,
\[\|A\| \le 2+\frac{(\log n)^{1+\eps} }{ d^{1/4} }.\]
\end{lemma}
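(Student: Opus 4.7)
The plan is the high-moment (trace) method. For an even integer $2m$, Markov's inequality applied to $\|A\|^{2m}\le \Tr(A^{2m})$ gives
\[\P(\|A\|\ge t)\le t^{-2m}\,\E[\Tr(A^{2m})],\]
and expanding the trace writes $\E[\Tr(A^{2m})]$ as a sum over closed walks of length $2m$ on $[n]$. Each walk contributes $\prod_e\E[a_e^{n_e}]$ over its distinct edges, with $n_e$ the multiplicity of edge $e$. By \eqref{eq:centerscale}, only walks with every $n_e\ge 2$ contribute; by the moment bound \eqref{eq:momentcond}, each such contribution is at most $C^{2m}/(n^E d^{m-E})$, where $E$ denotes the number of distinct edges.

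The heart of the argument is the bound on this sum of walks. I would group walks by the isomorphism class of their underlying labeled multigraph, separating \emph{tree-like} walks (those with $V=m+1$ vertices, $E=m$ distinct edges, and every $n_e=2$) from walks containing some excess (either a cycle edge or an edge of multiplicity strictly greater than two). Tree-like walks yield the classical Catalan combinatorics and contribute the leading term bounded by $4^m n$. The key technical claim, to be established by a careful F\"uredi--Koml\'os style labeled encoding of walks adapted to the sparse setting (in the spirit of \cite{EKYYsparse}), is that each unit of excess absorbs a factor of $O(1/\sqrt{d})$. This yields a trace bound of the form
\[\E[\Tr(A^{2m})]\le 4^m\, n\cdot \exp(Cm/\sqrt{d})\]
uniformly over all $m\le(\log n)^{10\log\log n}$ within the admissible moment range.

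To conclude, I would optimize in $m$. Writing $t=2+\delta$ with $\delta=(\log n)^{1+\eps}/d^{1/4}$, and using $2\log(1+\delta/2)\ge \delta$ for small $\delta$, Markov gives
\[\P(\|A\|\ge 2+\delta)\le n\cdot\exp(-m\delta + Cm/\sqrt{d}).\]
Under the hypothesis $d\ge(\log n)^{6+6\eps}$ one has $\delta\gg 1/\sqrt{d}$, so the correction $Cm/\sqrt{d}$ is a small fraction of $m\delta$. Choosing $m\asymp d^{1/4}$ makes $m\delta\asymp(\log n)^{1+\eps}$, and this $m$ sits within the moment range provided $d^{1/4}\le(\log n)^{10\log\log n}$; the very dense regime where this fails can be reduced to the bounded case via truncation. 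The resulting tail is $n\cdot\exp(-c(\log n)^{1+\eps})\le\exp(-c'(\log n)^{1+\eps})$, as required.

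The main obstacle is the combinatorial bookkeeping that extracts the promised $1/\sqrt{d}$ factor per unit of walk excess. This requires classifying closed walks by the topology of their underlying multigraph and carefully counting labeled walks of each type, distinguishing contributions from cycle edges and higher-order edge multiplicities. It is the only non-routine step; once the trace bound is in place, the remaining optimization in $m$ and the Markov step are mechanical.
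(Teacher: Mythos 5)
Your proposal attempts something the paper itself does not do: the paper's proof of this lemma is a one-line citation to \cite{EKYYsparse} (their Lemma 4.3, with $q\equiv\sqrt{d}$ and $\xi=1+\eps$), whereas you sketch a self-contained F\"uredi--Koml\'os trace-moment argument. The outer skeleton of your sketch is sound: the Markov/trace step, the bound $C^{2m} n^{-E} d^{-(m-E)}$ for the contribution of a walk with $E$ distinct edges and all multiplicities at least two, the tree-like leading term $4^m n$, the choice $m\asymp d^{1/4}$ so that $m\delta\asymp(\log n)^{1+\eps}$ while $Cm/\sqrt{d}=o(1)$, the observation that this $m$ must lie in the admissible moment range, and the truncation reduction in the ultra-dense regime are all correct (modulo the harmless slip that $2\log(1+\delta/2)<\delta$; you only need $\geq\delta/2$).

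The genuine gap is the pivotal claim $\E[\Tr(A^{2m})]\le 4^m n\,\exp(Cm/\sqrt{d})$, which you justify only by asserting that ``each unit of excess absorbs a factor of $O(1/\sqrt{d})$'' and deferring to a careful encoding. This claim is the entire content of the lemma, and as stated it obscures the two places where the argument can fail. First, a bookkeeping issue: since $\sum_e(n_e-2)=2(m-E)$ is even, a single unit of multiplicity excess contributes $1/d$ (one multiplicity-four edge) or $1/\sqrt{d}$ only for each member of a \emph{pair} of multiplicity-three edges, while cycle excess contributes $1/n$; so the claim needs restating before it can be proved. Second, and decisively, every unit of excess carries a combinatorial placement factor polynomial in $m$ (the number of positions in the walk where the coincidence can occur). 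With the crude F\"uredi--Koml\'os count of order $m^3$ per excess, the sum over excess configurations converges only for $m\lesssim d^{1/6}$, which yields a deviation of order $(\log n)^{1+\eps}d^{-1/6}$ --- strictly weaker than the stated $d^{-1/4}$ bound. Reaching $m\asymp d^{1/4}$ requires a Vu-type refinement showing the placement cost is $O(m^2)$ per excess, i.e.\ a per-excess factor $O(m^2/\sqrt{d})$, which is $O(1)$ only because of the specific choice $m\asymp \epsilon_0 d^{1/4}$ with $\epsilon_0$ small enough that the geometric series converges; your uniform-in-$m$ bound $\exp(Cm/\sqrt{d})$, with no polynomial-in-$m$ cost, is not what such encodings deliver and is not established by anything in the sketch. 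In short, you have chosen a viable (and genuinely different) route, but the core combinatorial estimate --- precisely what the cited EKYY lemma supplies --- is missing, so the proposal does not yet constitute a proof.
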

\begin{proof}
See \cite[Lemma 4.3]{EKYYsparse},  where we fix the
parameter $\xi=1+\eps$ in \cite[Eq.\ (2.4)]{EKYYsparse}. The notational
identification is $q \equiv \sqrt{d}$.
\end{proof}

\begin{lemma}[Concentration inequalities]
Let $\alpha,\beta \in \R^n$ be independent random vectors with independent
entries, satisfying 
\[\E[\alpha_{i}] = \E[\beta_{i}] =0, \qquad \E[\alpha_{i}^2] = \E[\beta_{i}^2] =
\frac{1}{n},\]
\begin{equation}\label{eq:alphabetacond}
\max(\E[|\alpha_{i}|^k],\E[|\beta_{i}|^k])  \le \frac{1}{n d^{(k-2)/2}} \text{
for each } k \in [2,(\log n)^{10 \log \log n}].
\end{equation}
For any constant $\eps>0$ and universal constants $C,c>0$, if
$n \ge d \ge (\log n)^{6+6\eps}$, then:
\begin{enumerate}[(a)]
\item For each $i \in [n]$, with probability at least $1-e^{-c(\log n)^{1+\eps}}$,
\begin{equation}\label{eq:abound}
|\alpha_i| \le \frac{C}{ \sqrt{d} }.
\end{equation}
\item For any deterministic vector $v \in \C^n$, with probability at least $1-e^{-c(\log n)^{1+\eps}}$,
\begin{equation}\label{eq:linearconcentration}
\left|v^\top \alpha\right| \le (\log n)^{1+\eps} \left(\frac{\|v\|_\infty}{ \sqrt{d} }+\frac{\|v\|_2}{\sqrt{n}}\right).
\end{equation}
Furthermore, for any even integer $p \in [2,(\log n)^{10\log\log n}]$,
\begin{equation}\label{eq:concentrationmoment}
\E\left[\left|v^\top \alpha\right|^p\right]
\le (Cp)^p\left(\frac{\|v\|_\infty}{ \sqrt{d} }
+\frac{\|v\|_2}{\sqrt{n}}\right)^p.
\end{equation}
\item For any deterministic matrix $M \in \C^{n \times n}$, with probability at least $1-e^{-c(\log n)^{1+\eps}}$,
\begin{equation}\label{eq:quad1}
\left|\alpha^\top M\alpha-\frac{1}{n}\Tr M \right| 
\le (\log n)^{2+2\eps} \left(\frac{2\|M\|_\infty}{ \sqrt{d} }
+\frac{\|M\|_F}{n}\right)
\end{equation}
and
\begin{equation}\label{eq:quad2}
\left|\alpha^\top M \beta\right| 
\le (\log n)^{2+2\eps} \left(\frac{2\|M\|_\infty}{ \sqrt{d} }
+\frac{\|M\|_F}{n}\right).
\end{equation}
\end{enumerate}
\end{lemma}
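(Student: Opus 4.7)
The plan is to derive all three parts from $p$-th moment bounds via Rosenthal-type inequalities at moment order $p\asymp(\log n)^{1+\eps}$, which lies safely within the allowed range of \eqref{eq:alphabetacond}, and then to convert these into tail bounds by Markov's inequality. For part (a), apply Markov to $|\alpha_i|^{2p}$ with $2p=(\log n)^{1+\eps}$: hypothesis \eqref{eq:alphabetacond} gives $\E[\alpha_i^{2p}]\le (nd^{p-1})^{-1}$, so $\P[|\alpha_i|>C/\sqrt d]\le (d/n)\,C^{-2p}\le e^{-c(\log n)^{1+\eps}}$ for any sufficiently large universal constant $C$.

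For part (b) the key step is the moment bound \eqref{eq:concentrationmoment}; the tail \eqref{eq:linearconcentration} then follows by Markov at $p=\lfloor(\log n)^{1+\eps}/(eC)\rfloor$. I would invoke Rosenthal's inequality for the sum $S=\sum_i v_i\alpha_i$ of independent centered summands,
\[
\E[|S|^p]^{1/p}\le Cp\,\max\Bigl(\bigl(\textstyle\sum_i v_i^2\E[\alpha_i^2]\bigr)^{1/2},\,\bigl(\sum_i|v_i|^p\E[|\alpha_i|^p]\bigr)^{1/p}\Bigr).
\]
Substituting $\E[\alpha_i^2]=1/n$ turns the first argument of the max into $\|v\|_2/\sqrt n$. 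For the second, the interpolation $\|v\|_p^p\le\|v\|_\infty^{p-2}\|v\|_2^2$ followed by AM-GM yields
\[
n^{-1/p}d^{-(p-2)/(2p)}\|v\|_p\le n^{-1/p}(\|v\|_\infty/\sqrt d)^{(p-2)/p}(\|v\|_2/\sqrt n)^{2/p}\le \|v\|_\infty/\sqrt d+\|v\|_2/\sqrt n,
\]
which is \eqref{eq:concentrationmoment}.

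For part (c), I would first handle \eqref{eq:quad2} by conditioning on $\beta$ and applying \eqref{eq:concentrationmoment} to the linear form $\alpha^\top(M\beta)$, obtaining $\E_\alpha[|\alpha^\top M\beta|^p]^{1/p}\le Cp(\|M\beta\|_\infty/\sqrt d+\|M\beta\|_2/\sqrt n)$. Under the randomness of $\beta$, Minkowski applied to $\|M\beta\|_2^2=\sum_i(M\beta)_i^2$ together with \eqref{eq:concentrationmoment} applied row-by-row produces $\E[\|M\beta\|_2^p]^{2/p}\le(Cp)^2(n\|M\|_\infty^2/d+\|M\|_F^2/n)$; the crude coordinate bound $\|M\beta\|_\infty^p\le\sum_i|(M\beta)_i|^p$ with the same row-wise argument controls the $\ell_\infty$ moment. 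At $p\asymp(\log n)^{1+\eps}$ the factor $n$ from the $\ell_\infty$ union is absorbed via $n\le e^p$, and Markov produces \eqref{eq:quad2} with the claimed $(\log n)^{2+2\eps}$ prefactor. For \eqref{eq:quad1} I would split $\alpha^\top M\alpha-\Tr M/n=\sum_i M_{ii}(\alpha_i^2-1/n)+\sum_{i\ne j}M_{ij}\alpha_i\alpha_j$; the diagonal piece is a linear form in the independent centered variables $\alpha_i^2-1/n$, whose $k$-th moments still satisfy a bound of the form \eqref{eq:alphabetacond}, so part (b) applies directly, while the off-diagonal piece is dominated in moments by $|\alpha^\top M'\beta|$ for an independent copy $\beta$ (with $M'$ the zero-diagonal part of $M$) via standard de la Pe\~na--Gin\'e decoupling, after which \eqref{eq:quad2} applies.

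The main obstacle is keeping the logarithmic loss in part (c) down to $(\log n)^{2+2\eps}$ rather than $(\log n)^{3+3\eps}$: a naive iteration of the tail form \eqref{eq:linearconcentration} would require high-probability control of the intermediate random norms $\|M\beta\|_\infty$ and $\|M\beta\|_2$, costing an extra $\log n$ factor. Working throughout at the level of $p$-th moments, aggregating $\|M\beta\|_2$ globally through $\|M\|_F/\sqrt n$ via Minkowski rather than row-by-row, and absorbing the $\ell_\infty$ union bound via $n\le e^p$ for $p\asymp(\log n)^{1+\eps}$, limits the total logarithmic cost to exactly two factors.
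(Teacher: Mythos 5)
Your proposal is correct in outline, but it takes a genuinely different route from the paper: the paper does not prove this lemma internally at all — it simply cites the large-deviation estimates of Erd\H{o}s--Knowles--Yau--Yin for sparse matrices (\cite[Lemmas 3.7, 3.8, A.1(i)]{EKYYsparse}, with $\xi=1+\eps$), which are themselves established by bespoke high-moment expansions tailored to the sparse moment condition. Your self-contained argument — Rosenthal at $p\asymp(\log n)^{1+\eps}$, the interpolation $\|v\|_p^p\le\|v\|_\infty^{p-2}\|v\|_2^2$ to produce the $\|v\|_\infty/\sqrt d+\|v\|_2/\sqrt n$ split, conditioning plus Minkowski for the bilinear form, and de la Pe\~na--Gin\'e decoupling for the off-diagonal quadratic form — is a more elementary and modular replacement that delivers the same $(\log n)^{1+\eps}$ and $(\log n)^{2+2\eps}$ prefactors; what it gives up is only sharpness of constants, which is irrelevant here since any multiplicative constant is absorbed by running the argument at a slightly smaller $\eps$.

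Three details need to be filled in for the writeup to be airtight, none of them conceptual. First, in the $\ell_\infty$ part of \eqref{eq:quad2}, your row-wise bound leaves you with $\E[\|M\beta\|_\infty^p]^{1/p}/\sqrt d \lesssim p\,n^{1/p}\bigl(\|M\|_\infty/d+\max_i\|m_i\|_2/\sqrt{nd}\bigr)$; the term $\max_i\|m_i\|_2/\sqrt{nd}$ is \emph{not} dominated by $\|M\|_F/n$ when $d\ll n$, so you must use $\max_i\|m_i\|_2\le\sqrt n\,\|M\|_\infty$ to fold it into $\|M\|_\infty/\sqrt d$. Second, for the diagonal piece of \eqref{eq:quad1}, part (b) does not apply verbatim because $\alpha_i^2-1/n$ has variance of order $1/(nd)$, not $1/n$; you should rerun the Rosenthal computation with $\E[(\alpha_i^2-1/n)^2]\le 1/(nd)$ and $\E[|\alpha_i^2-1/n|^k]\le 2^k/(nd^{k-1})$, which in fact yields the stronger bound $Cp\,\|M\|_\infty/\sqrt d$ via $\|\mathrm{diag}(M)\|_2\le\sqrt n\,\|M\|_\infty$ and $d^{1/p}\le e$. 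Third, your displayed AM--GM chain in part (b) carries a spurious extra factor $n^{-1/p}$ in the middle expression (the correct identity is $n^{-1/p}\|v\|_2^{2/p}=(\|v\|_2/\sqrt n)^{2/p}$); this is only a typo since it errs in the harmless direction.
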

\begin{proof}
See \cite[Lemma 3.7, Lemma 3.8, and Lemma A.1(i)]{EKYYsparse}, where again
we fix $\xi=1+\eps$.
\end{proof}

Next, based on the above lemma, 
we state concentration inequalities for a bilinear form 
that apply to our setting directly. 

\begin{lemma}[Concentration of bilinear form] \label{lmm:bilinear}
Let $\alpha, \beta \in \reals^n$ be random vectors such that the pairs 
$(\alpha_i,\beta_i)$ for $i \in [n]$ are independent, with
\[
\E[\alpha_i]=\E[\beta_i]=0, \qquad \E[\alpha_i^2]=\E[\beta_i^2]=\frac{1}{n},
\qquad \E[\alpha_i\beta_i] \geq \frac{1-\sigma^2}{n}.
\]
Let
$M \in \C^{n \times n}$ be any deterministic matrix. 
\begin{enumerate}[(a)]
\item For any constant $\eps>0$, suppose (\ref{eq:alphabetacond}) holds where
$n \geq d \geq (\log n)^{6+6\eps}$.
Then there are universal constants $C,c>0$ such that 
with probability at least $1-e^{-c (\log n)^{1+\eps}}$,
\begin{align}
\left| \alpha^\top M \beta - \frac{1-\sigma^2}{n} \Tr M \right|\le C  \left(\log n\right)^{2+2\eps} \left( \frac{1}{n} \|M\|_F  + \frac{1}{\sqrt{d}} \|M\|_\infty  \right).
\label{eq:diag}
\end{align}

\item Suppose that $\alpha_i, \beta_i$ are sub-Gaussian with 
$
\|\alpha_i\|_{\psi_2}=\|\beta_i\|_{\psi_2} \leq \frac{K}{\sqrt{n}} 
$
for a constant $K>0$. 
Then for any $D>0$, there exists 
a constant $C\equiv C_{K,D}$ only depending on $K$ and $D$ such that with probability 
at least $1-n^{-D}$,
\begin{align}
& \left| \alpha^\top M \beta - \frac{1-\sigma^2}{n} \Tr M \right| \le \frac{C \log n}{n} \|M\|_F. \label{eq:bilinear_wigner}
\end{align}
\end{enumerate}
\end{lemma}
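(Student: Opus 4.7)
The bound has the same shape as the quadratic-form estimates \prettyref{eq:quad1} and \prettyref{eq:quad2} recalled above, and my plan is to re-run the proofs of those results with the pair-correlation built in. The only structural obstacle is that $\alpha$ and $\beta$ are correlated within each index $i$, whereas \prettyref{eq:quad2} assumes $\alpha \perp \beta$; this correlation is in fact harmless, because Cauchy--Schwarz inside each independent pair yields the same joint moment bound as one would have in the independent case.

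For part (a), I would first split
\[\alpha^\top M \beta \;=\; \sum_i M_{ii}\alpha_i\beta_i \;+\; \sum_{i \neq j} M_{ij}\alpha_i\beta_j \;\defn\; D + O.\]
The diagonal summands are independent across $i$ by the pair-independence assumption, with $\E[\alpha_i\beta_i]\in[(1-\sigma^2)/n,\,1/n]$ by Cauchy--Schwarz and \prettyref{eq:centerscale}, so
\[\Bigl|\E D - \tfrac{1-\sigma^2}{n}\Tr M\Bigr| \;\le\; \tfrac{\sigma^2}{n}\sum_i|M_{ii}| \;\le\; \tfrac{\sigma^2}{\sqrt n}\|M\|_F,\]
which is absorbed into the right-hand side of \prettyref{eq:diag}. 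A Rosenthal/Bernstein moment bound for sums of independent variables, applied with $\E|\alpha_i\beta_i|^k \le (\E\alpha_i^{2k}\E\beta_i^{2k})^{1/2}\le 1/(nd^{k-1})$ from \prettyref{eq:alphabetacond}, controls $|D-\E D|$ by $(\log n)^{1+\eps}\|M\|_\infty/\sqrt d$.

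For the off-diagonal sum $O$, I would re-run the high-moment expansion that underlies \prettyref{eq:quad2} in \cite{EKYYsparse}: writing
\[\E\bigl[O^{2p}\bigr] \;=\; \sum_{i_1,\ldots,i_{2p},\,j_1,\ldots,j_{2p}} \Bigl(\prod_\ell M_{i_\ell j_\ell}\Bigr)\, \E\!\prod_\ell \alpha_{i_\ell}\beta_{j_\ell},\]
the expectation factorises over the distinct indices $k$ in the multisets $\{i_\ell\},\{j_\ell\}$, contributing one factor $\E[\alpha_k^{a_k}\beta_k^{b_k}]$ per $k$. Cauchy--Schwarz gives $|\E[\alpha_k^{a_k}\beta_k^{b_k}]|\le (\E\alpha_k^{2a_k}\E\beta_k^{2b_k})^{1/2}\le 1/(nd^{(a_k+b_k-2)/2})$, \emph{exactly} the bound used in the independent case. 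Hence the combinatorial graph analysis in \cite{EKYYsparse} goes through verbatim and produces the claimed $(\log n)^{2+2\eps}(\|M\|_F/n+\|M\|_\infty/\sqrt d)$ control on $|O|$; Markov's inequality with $p\asymp(\log n)^{1+\eps}$ then upgrades this to the stated high-probability bound.

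For part (b), each pair $(\alpha_i,\beta_i)$ is sub-Gaussian in $\R^2$ with norm $\lesssim K/\sqrt n$, and the pairs are independent across $i$. Stacking them into $\gamma=(\alpha_1,\beta_1,\ldots,\alpha_n,\beta_n)\in\R^{2n}$, there is a symmetric $\tilde M\in\R^{2n\times 2n}$ with $\alpha^\top M\beta = \gamma^\top\tilde M\gamma$, $\|\tilde M\|_F\asymp\|M\|_F$ and $\|\tilde M\|\le\|\tilde M\|_F$. A Hanson--Wright-type inequality for quadratic forms in independent sub-Gaussian blocks then gives
\[\P\bigl\{|\gamma^\top\tilde M\gamma - \E\gamma^\top\tilde M\gamma|\ge t\bigr\}\le 2\exp\!\Bigl(-c\min\bigl(n^2 t^2/\|\tilde M\|_F^2,\,nt/\|\tilde M\|\bigr)\Bigr);\]
setting $t\asymp D(\log n)\|M\|_F/n$ yields \prettyref{eq:bilinear_wigner} after absorbing the mean correction $|\E[\alpha^\top M\beta]-(1-\sigma^2)\Tr M/n|\le\sigma^2\|M\|_F/\sqrt n$ from the analysis above. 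The only nontrivial point in the whole argument is the moment bookkeeping for $O$ in part (a); the Cauchy--Schwarz-within-a-pair observation is what keeps it identical to the independent case, and everything else is routine.
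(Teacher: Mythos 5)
Your proposal is correct in its conclusion but takes a genuinely different route from the paper. The paper uses the polarization identity
\[\alpha^\top M\beta = \tfrac{1}{4}(\alpha+\beta)^\top M(\alpha+\beta) - \tfrac{1}{4}(\alpha-\beta)^\top M(\alpha-\beta),\]
which reduces the bilinear form directly to two applications of the single-vector quadratic-form bound \eqref{eq:quad1} (for part (a)) or the scalar Hanson--Wright inequality of \cite[Lemma~A.2]{FMWX19a} (for part (b)) applied to $\alpha \pm \beta$, each of which has independent entries; nothing needs to be re-proved. Your route---splitting into diagonal and off-diagonal sums, re-running the EKYY moment expansion, and invoking a blockwise Hanson--Wright---is viable, but one claim deserves caution: that the combinatorial analysis underlying \eqref{eq:quad2} in \cite{EKYYsparse} goes through ``verbatim.'' In the independent-$\alpha,\beta$ case, every term where some index $k$ appears exactly once as an $\alpha$-index and exactly once as a $\beta$-index vanishes, since $\E[\alpha_k]\E[\beta_k]=0$; in your correlated setting such terms survive via $\E[\alpha_k\beta_k]\neq 0$, so a genuinely new class of partition structures contributes to the moment expansion. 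Your Cauchy--Schwarz observation does bound each such term, and since every distinct index must still appear at least twice in the combined $(\alpha,\beta)$ count, the cap on the number of distinct indices (hence the count of structures) is of the same order, so the final bound holds---but the bookkeeping is modified, not identical. Similarly, your part (b) needs a Hanson--Wright statement for quadratic forms with independent sub-Gaussian blocks (or a decoupling step), whereas polarization sidesteps this by reducing to a vector with independent scalar entries. In short, your approach works but requires re-verifying moment combinatorics that the paper's polarization trick renders unnecessary, which is why the paper's proof is a few lines. One small point you handle correctly that the paper glosses over: the hypothesis only gives $\E[\alpha_i\beta_i]\ge(1-\sigma^2)/n$, not equality, so the centering introduces an extra $O(\sigma^2\|M\|_F/\sqrt{n})$ term, which you explicitly absorb into the right-hand side.
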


\begin{proof}
In view of the polarization identity 
\begin{align*}
 \alpha^\top M \beta=\frac{1}{4}(\alpha+\beta)^\top M(\alpha+\beta)
 -\frac{1}{4}(\alpha-\beta)^\top M(\alpha-\beta),
\end{align*}
it suffices to analyze the two terms separately. 
Note that 
 \[
 \E\left[(\alpha+\beta)^\top M(\alpha+\beta) \right]=\frac{4-2\sigma^2}{n} \Tr M,
 \qquad \E\left[(\alpha-\beta)^\top M(\alpha-\beta) \right]= \frac{2\sigma^2}{n} \Tr M ,
 \]
which yields the desired expectation $\E[ \alpha^\top M \beta ] = \frac{1-\sigma^2}{n} \Tr M.$
Thus it remains to study the deviation. 

To prove the concentration bound \prettyref{eq:diag}, we obtain from \prettyref{eq:quad1} 
that, there is a universal constant $c>0$ such that 
with probability at least $1-e^{-c (\log n)^{1+\eps}}$,
$$
\left| (\alpha \pm \beta)^\top  M (\alpha \pm \beta) - 
\E[(\alpha \pm \beta)^\top  M (\alpha \pm \beta)] \right|
\le (\log n )^{2 + 2\eps} \left( \frac{1}{n} \|M\|_F  + \frac{2}{ \sqrt{d} } \|M\|_\infty  \right) , 
$$
from which \prettyref{eq:diag} easily follows. 

The sub-Gaussian concentration bound  \prettyref{eq:bilinear_wigner} follows from 
the Hanson-Wright inequality \cite{HanWri71, RudVer13}. 
More precisely, note that 
$\max\{\|\alpha+\beta\|_{\psi_2}, \|\alpha-\beta\|_{\psi_2}\} \leq \|\alpha\|_{\psi_2} +  \|\beta\|_{\psi_2} \leq
 2K/\sqrt{d}$, so taking $\delta = n^{-D}/2$ in  \cite[Lemma~A.2]{FMWX19a} yields that with probability at least $1 - n^{-D}$, 
\begin{align*}
\left|(\alpha \pm \beta)^\top M(\alpha \pm \beta)-\expect{ (\alpha \pm \beta)^\top M(\alpha \pm \beta) }\right| \le C_{K, D} \frac{\log n}{n} \|M\|_F , 
\end{align*}
which completes the proof. 
\end{proof}

\subsection{The Stieltjes transform}

Denote the semicircle density and its Stieltjes transform by
\begin{equation}\label{eq:stieltjes}
\rho(x)=\frac{1}{2\pi}\sqrt{4-x^2} \, \indc{|x| \leq 2}
\quad \text{ and } \quad 
m_0(z)=\int  \frac{1}{x-z} \rho(x) dx = \frac{-z + \sqrt{z^2 - 4}}{2}
\end{equation}
respectively, where $m_0(z)$ is defined
for $z \notin [-2,2]$, and $\sqrt{z^2 - 4}$ is defined with a branch cut on $[-2,2]$ so that $\sqrt{z^2 - 4} \sim z$ as $|z| \to \infty$.
We have the conjugate symmetry
$\overline{m_0(z)}=m_0(\bar{z})$.

We record the following basic facts about the Stieltjes transform.

\begin{proposition}\label{prop:m0wigner}
For each $z \in \C \setminus \R$, the Stieltjes transform
$m_0(z)$ is the unique value satisfying
\begin{equation}\label{eq:wignerequation}
m_0(z)^2+zm_0(z)+1=0 \quad \text{ and } \quad \Im m_0(z) \cdot \Im z>0.
\end{equation}
Setting $\zeta(z) \defn \min(|\Re z-2|,|\Re z+2|)$, uniformly over
$z \in \C \setminus [-2,2]$ with $|z| \leq 10$,
\begin{equation}
    |m_0(z)| \asymp 1, \;\; |\Im m_0(z)| \gtrsim |\Im z|, \;\; \text{ and }
    \;\;
|\Im m_0(z)| \asymp \begin{cases}
\sqrt{\zeta(z)+|\Im z|}  & \text{ if } |\Re z| \leq 2 , \\
|\Im z|/\sqrt{\zeta(z)+|\Im z|}  & \text{ if } |\Re z|>2.
\end{cases}
\label{eq:Imm0}
\end{equation}
For $x \in [-2,2]$, the continuous extensions
$$
m_0^+(x) \defn \lim_{z \to x:\;z \in \C^+} m_0(z), \quad 
m_0^-(x) \defn \lim_{z \to x:\;z \in \C^-} m_0(z)
$$
from $\C^+$ and $\C^-$ both exist. For all $x \in [-2,2]$, these satisfy
\begin{equation}
m_0^\pm(x)^2+xm_0^\pm(x)+1=0, \;\; m_0^+(x)=\overline{m_0^-(x)}, \;\;
\frac{1}{\pi} \Im m_0^+(x)=-\frac{1}{\pi} \Im m_0^-(x)=\rho(x), \;\;
|m_0^\pm(x)|=1.
\label{eq:Imm0-rho}
\end{equation}
\end{proposition}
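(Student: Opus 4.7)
The plan is to verify all assertions by direct manipulation of the explicit formula $m_0(z) = (-z+\sqrt{z^2-4})/2$, since every claim in the proposition reduces to algebra about this single function.

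First I would establish the quadratic identity and uniqueness. Substituting the explicit formula into $m^2 + zm + 1$ gives, after expanding $((-z+\sqrt{z^2-4})/2)^2 + z(-z+\sqrt{z^2-4})/2 + 1$, a telescoping cancellation that yields $0$; this verifies the equation. The sign condition $\Im m_0(z) \cdot \Im z > 0$ holds because the Stieltjes transform of any probability measure has imaginary part of the same sign as $\Im z$ (a direct consequence of $\Im \tfrac{1}{x-z} = \tfrac{\Im z}{|x-z|^2}$). For uniqueness, the quadratic $m^2+zm+1=0$ has only two roots $m_\pm = (-z \pm \sqrt{z^2-4})/2$ with $m_+ + m_- = -z$ and $m_+ m_- = 1$. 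If both roots satisfied $\Im m \cdot \Im z > 0$, then $\Im(m_+ + m_-) = -\Im z$ would share the sign of $\Im z$, a contradiction whenever $\Im z \neq 0$.

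Next, for the magnitude and imaginary-part estimates on $\{|z| \leq 10\} \setminus [-2,2]$, I would do case analysis based on whether $|\Re z| \leq 2$ or $|\Re z| > 2$, controlling $\sqrt{z^2 - 4}$ in each regime. Writing $z = x + i y$ with $y \neq 0$, one has $z^2 - 4 = (x^2-y^2-4) + 2ixy$; the modulus and argument of this expression are readily computed, and $\sqrt{z^2-4}$ is obtained via the principal branch fixed by $\sqrt{z^2-4} \sim z$ at infinity. The relation $m_0 m_- = 1$ with $m_- = -z - m_0$ gives $m_0 = -1/(z+m_0)$, which combined with the boundedness of $m_0$ (straightforward from $|m_0| \leq |z|/2 + |\sqrt{z^2-4}|/2 \lesssim 1$ on compact sets) yields $|m_0(z)| \asymp 1$. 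For $\Im m_0$, the bound $\Im m_0 \gtrsim \Im z$ follows from the identity $\Im m_0 = \Im z / |z + m_0|^2 \cdot \Re(\cdots)$... more cleanly, from $m_0(z) = \int (x-z)^{-1} \rho(x)\,dx$ one has $\Im m_0 = \Im z \int |x-z|^{-2} \rho(x)\,dx$, which is bounded below by a constant times $|\Im z|$ on $|z| \leq 10$. The scaling $\sqrt{\zeta(z) + |\Im z|}$ versus $|\Im z|/\sqrt{\zeta(z)+|\Im z|}$ inside versus outside the bulk comes directly from expanding $\sqrt{(x-2)(x+2) + O(y)}$ near $x = \pm 2$.

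Finally, for the boundary extensions and their properties, the formula $m_0(z) = (-z + \sqrt{z^2-4})/2$ extends continuously to $\mathbb{C}^\pm \cup (-2,2)$ because $z^2 - 4$ maps $\mathbb{C}^+$ (respectively $\mathbb{C}^-$) continuously into regions where the principal branch of $\sqrt{\cdot}$ is well-defined; as $z \to x \in (-2,2)$ from $\mathbb{C}^\pm$ the quantity $z^2 - 4 \to x^2 - 4 < 0$ from opposite sides of the negative real axis, giving $\sqrt{z^2-4} \to \pm i\sqrt{4-x^2}$. Hence $m_0^\pm(x) = (-x \pm i\sqrt{4-x^2})/2$. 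The four claimed identities are then immediate: the quadratic equation persists by continuity; $m_0^+(x) = \overline{m_0^-(x)}$ from inspection; $\Im m_0^\pm(x) = \pm \sqrt{4-x^2}/2 = \pm \pi \rho(x)$ by the definition of $\rho$; and $|m_0^\pm(x)|^2 = (x^2 + (4-x^2))/4 = 1$. I do not anticipate a main obstacle here since the entire proposition is a catalogue of elementary facts about a single explicit function; the only mildly delicate step is the case-by-case verification of the scaling of $\Im m_0$ near the edges $\pm 2$, for which one carefully tracks the real and imaginary parts of $\sqrt{z^2-4}$ as $z$ approaches $\pm 2$.
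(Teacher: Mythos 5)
Your proposal is correct in substance but takes a genuinely different and more self-contained route than the paper. The paper outsources the two nontrivial parts to citations: the scaling of $|\Im m_0|$ in \eqref{eq:Imm0} is cited from \cite[Lemma 4.3]{EKYYlocal}, and the existence of the boundary limits $m_0^\pm$ is cited from \cite[Corollary 1]{biane1997free}, with the remaining claims dispatched via one-line observations (definition, conjugate symmetry, Stieltjes inversion, and the product of roots of the quadratic). You instead derive everything from the explicit formula $m_0(z) = (-z+\sqrt{z^2-4})/2$, which is arguably more illuminating for this particular function and avoids external references, at the cost of doing the edge case analysis for $\sqrt{z^2-4}$ near $\pm 2$ by hand. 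Your integral-representation lower bound $\Im m_0 = \Im z \int \rho(x)|x-z|^{-2}\,dx \gtrsim |\Im z|$ on $|z|\leq 10$ and your uniqueness argument via $m_+ + m_- = -z$ are both clean, and your direct computation $|m_0^\pm(x)|^2 = (x^2+(4-x^2))/4 = 1$ is equivalent to the paper's ``product of conjugate roots equals $1$'' observation.

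One imprecision worth correcting in the boundary-extension step: you state that $z^2-4$ maps $\mathbb{C}^+$ ``continuously into regions where the principal branch of $\sqrt{\cdot}$ is well-defined.'' That is not the case: for $z = x+iy$ with $y>0$, $\Im(z^2-4) = 2xy$ has the sign of $x$, so the image of $\mathbb{C}^+$ straddles both sides of the negative real axis and touches it at $z = iy$, where the principal branch would be discontinuous. The branch the paper fixes has its cut on $[-2,2]$ (not the negative reals) and is normalized by $\sqrt{z^2-4}\sim z$ at infinity, hence is analytic on all of $\mathbb{C}\setminus[-2,2]$; to pin the sign of the boundary value one should instead note that $\Im\sqrt{z^2-4}$ is nonvanishing on $\mathbb{C}^+$ (a zero would force $z^2-4 \in [0,\infty)$, i.e.\ $z$ real) and is positive at, say, $z=2i$, so it stays positive throughout $\mathbb{C}^+$. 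This recovers your conclusion $\sqrt{z^2-4} \to +i\sqrt{4-x^2}$ as $z\to x\in(-2,2)$ from $\mathbb{C}^+$, and all four identities in \eqref{eq:Imm0-rho} then follow as you state.
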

\begin{proof}
(\ref{eq:wignerequation}) follows from the definition of $m_0$.
(\ref{eq:Imm0}) follows from \cite[Lemma 4.3]{EKYYlocal} and continuity and
    conjugate symmetry of $m_0$.
    For the existence of $m_0^+$ (and hence also
    $m_0^-$), see e.g.\ the more general statement of
\cite[Corollary 1]{biane1997free}.
    The first claim of (\ref{eq:Imm0-rho}) follows from
continuity and (\ref{eq:wignerequation}), the second from conjugate symmetry,
the third from the
Stieltjes inversion formula, and the last from the fact that the two roots of
(\ref{eq:wignerequation}) at $z=x \in [-2,2]$ are $m_0^+(x)$ and
$m_0^-(x)=\overline{m_0^+(x)}$, so that
$1=m_0^\pm(x)\overline{m_0^\pm(x)}=|m_0^\pm(x)|^2$.
\end{proof}





\subsection{Resolvent bounds}

For a fixed constant $a>0$ and all large $n$,
we bound the resolvent $R(z) = R_A(z)$ over the spectral domain
\begin{align*}
D&=D_1 \cup D_2, \text{ where} \\
D_1&=\{z \in \C:\Re z \in [-3,3],\;|\Im z| \in [1/(\log n)^a,1]\}, \text{ and} \\
D_2&=\{z \in \C:|\Re z| \in [2.6,3],\;|\Im z| \leq 1/(\log n)^a\}.
\end{align*}
Here, $D_1$ is the union of two strips in the upper and lower half planes,
and $D_2$ is the union of two strips in the left and right half planes.

\begin{theorem}[Resolvent bounds]\label{thm:locallawwigner}
Suppose $A \in \R^{n \times n}$ has independent entries $(a_{ij})_{i \leq j}$
satisfying (\ref{eq:centerscale}) and (\ref{eq:momentcond}).
Fix a constant $a>0$ which defines the domain $D$, fix $\eps>0$, and set
\[b=\max(16+3\eps+2a,3+3\eps+5a/2), \qquad b'=\max(16+4\eps+2a,4+5\eps+6a).\]
Suppose $n \ge d \ge (\log n)^{b'}$.
Then for some constants $C,c,n_0>0$ depending on $a$ and $\eps$, and for
all $n \ge n_0$, with probability
$1-e^{-c(\log n)(\log \log n)}$, the following hold simultaneously
for every $z \in D$:
\begin{enumerate}[(a)]
\item (Entrywise bound) For all $j \ne k \in [n]$,
\begin{equation}\label{eq:wigneroffdiaglaw}
|R_{jk}(z)| \le \frac{C(\log n)^{2+2\eps+a}}{ \sqrt{d} }.
\end{equation}
For all $j \in [n]$,
\begin{equation}\label{eq:wignerdiaglaw}
|R_{jj}(z)-m_0(z)| \le \frac{C(\log n)^{2+2\eps+3a/2}}{ \sqrt{d} }.
\end{equation}
\item (Row sum bound) For all $j \in [n]$,
\begin{equation}\label{eq:wignerrowsumlaw}
\left|\coord_j^\top R(z) \bone \right| \le C(\log n)^{1+\eps+a}.
\end{equation}
\item (Total sum bound)
\begin{equation}
\label{eq:wignertotalsumlaw}
|\bone^\top R(z) \bone -n \cdot m_0(z)| \le \frac{Cn(\log n)^b}{ \sqrt{d} }.
\end{equation}
\end{enumerate}
\end{theorem}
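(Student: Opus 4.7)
The proof will follow the standard resolvent (Schur complement) strategy for sparse Wigner matrices, as developed by Erd\H{o}s--Knowles--Yau--Yin \cite{EKYYlocal,EKYYsparse}. The plan is to (i) derive self-consistent equations for the diagonal entries of $R(z)$ via the Schur complement identity, (ii) apply the quadratic/linear form concentration inequalities from Section~\ref{sec:rmt} together with the stability of the Wigner equation from Proposition~\ref{prop:m0wigner}, and (iii) bootstrap the bounds from the easy high-$|\Im z|$ region into the full spectral domain $D$. Throughout, I would fix a deterministic high-probability event on which the relevant concentration statements hold simultaneously for all required test vectors and matrices (via a union bound at a discrete net, followed by a Lipschitz-in-$z$ argument).

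For part (a), I would use the identity
\[
R_{jj}(z) = \frac{1}{a_{jj} - z - a^{(j)\top} R^{(j)}(z) a^{(j)}},
\qquad R_{jk}(z) = -R_{jj}(z) \sum_{\ell \ne j} a_{j\ell} R^{(j)}_{\ell k}(z),
\]
where $A^{(j)}$ is the minor of $A$ with row/column $j$ deleted. The concentration bound \eqref{eq:quad1} shows that $a^{(j)\top} R^{(j)}(z) a^{(j)}$ is close to $\tfrac{1}{n} \Tr R^{(j)}(z)$, while the interlacing of minors gives $\tfrac{1}{n}\Tr R^{(j)}(z) \approx \tfrac{1}{n} \Tr R(z)$. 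Plugging into the Schur identity yields an approximate fixed-point equation for $m_n(z) = \tfrac{1}{n}\Tr R(z)$ of the form $m_n^2 + z m_n + 1 \approx 0$, whose stability (Proposition~\ref{prop:m0wigner}) forces $m_n \approx m_0$. The off-diagonal bound then follows from \eqref{eq:linearconcentration} applied to $\sum_{\ell}a_{j\ell} R^{(j)}_{\ell k}$, using the Ward-identity bound $\|R^{(j)}(z)\coord_k\|_2^2 = \Im R^{(j)}_{kk}(z)/\Im z$ to control the $\ell_2$ norm of the test vector. On $D_1$ the bootstrap proceeds along a lattice of $z$ values by lowering $\Im z$; on $D_2$ it exploits being well separated from $[-2,2]$ so that $\|R(z)\|$ is deterministically $O(1)$.

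For part (b), write $\coord_j^\top R(z) \bone = R_{jj}(z)(1 - a^{(j)\top} u^{(j)})$, where $u^{(j)} \defn R^{(j)}(z) \bone^{(j)}$ collects row sums of the minor resolvent. Inductively (for smaller matrices) $\|u^{(j)}\|_\infty \lesssim (\log n)^{1+\eps+a}$, and the Ward identity gives
\[
\|u^{(j)}\|_2^2 = \frac{\Im (\bone^{(j)\top} R^{(j)}(z) \bone^{(j)})}{\Im z} \lesssim \frac{n}{|\Im z|}
\]
once (c) is available for the minor. Applying \eqref{eq:linearconcentration} to $a^{(j)\top} u^{(j)}$ and using $|\Im z| \ge (\log n)^{-a}$ yields the claimed polylog bound. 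For part (c), summing over $j$ gives
\[
\bone^\top R(z) \bone = \sum_j R_{jj}(z) - \sum_j R_{jj}(z) \cdot a^{(j)\top} u^{(j)}.
\]
The first sum is within $n \cdot (\log n)^{2+2\eps+3a/2}/\sqrt{d}$ of $n m_0(z)$ by part (a). The second sum cannot be bounded term-by-term since that loses a factor of $n$; instead, I would apply a fluctuation-averaging bound, either by computing a high moment $\E|\sum_j R_{jj} a^{(j)\top} u^{(j)}|^p$ and exploiting (near-)independence of $a^{(j)}$ across $j$, or by iterating the Schur identity once more as in the ``$T$-equation'' approach of \cite{EKYYsparse}. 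This should produce the extra factor of $1/\sqrt{n}$ that converts the naive $n^2/\sqrt{d}$ into the desired $n/\sqrt{d}$, absorbed into the enlarged exponent $b$.

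The main technical obstacle is part (c): the off-diagonal contributions to $\bone^\top R \bone$ sum $n^2$ terms of individual size $1/\sqrt{d}$, so the required bound demands a genuine cancellation mechanism rather than term-by-term control. Obtaining this via fluctuation averaging is delicate because the correct moment calculation must interact cleanly with the sparse moment condition \eqref{eq:momentcond} (whose $k$-range is only $(\log n)^{10\log\log n}$), forcing a careful truncation of the moment parameter $p$ and explaining the somewhat ugly exponents $b,b'$. A secondary obstacle is the domain $D_2$, where $|\Im z|$ can be as small as $(\log n)^{-a}$ but $|\Re z|$ is separated from $[-2,2]$; here one must use the spectral-edge asymptotics of $m_0(z)$ from \eqref{eq:Imm0} to verify that the stability of the Wigner equation survives, and couple this with the norm bound (Lemma~\ref{lmm:normbound}) to rule out eigenvalues in the strip.
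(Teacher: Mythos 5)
Your proposal follows essentially the same route as the paper for parts (a) and (c): Schur complement self-consistent equation and stability of the Wigner quadratic for (a), and a fluctuation-averaging argument with carefully truncated moment parameter $p \lesssim \log n$ for (c). Your diagnosis of why (c) is the hard part — needing cancellation across $n^2$ off-diagonal terms rather than term-by-term control — and why the moment cutoff in \eqref{eq:momentcond} constrains the choice of $p$, matches the paper's use of the Erd\H{o}s--Knowles--Yau--Yin fluctuation averaging lemma with the inclusion--exclusion variables $\cZ_i^{[U]}$.

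Where you diverge, and where the plan has a genuine gap, is in part (b). You propose bounding $\|u^{(j)}\|_2$ via the Ward identity $\|R^{(j)}\bone\|_2^2 = \Im(\bone^\top R^{(j)}\bone)/\Im z$, invoking part (c) for the minor to control $\Im(\bone^\top R^{(j)}\bone) \lesssim n$. Two problems. First, since (c) in turn requires (b) (row-sum control enters the fluctuation-averaging setup for $\cZ_i$), you would need to break the circularity by a strong induction on matrix size, which you do not set up. Second, and more concretely, the error term in (c) is $n(\log n)^b/\sqrt{d}$, and the theorem's hypothesis only gives $d \geq (\log n)^{b'}$; since $b'$ is \emph{not} generally $\geq 2b$ (for instance $a=1$ and $\eps \to 0$ gives $b = b' = 18$, so $\sqrt{d}$ can be as small as $(\log n)^9$), the total-sum error can dominate $n |\Im m_0|$ and the Ward identity yields no gain over the naive bound. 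The paper's actual proof of (b) sidesteps both issues by using the crude deterministic bound $\|R^{(j)}\bone\|_2^2 \leq n\|R^{(j)}\|^2 \leq n(\log n)^{2a}$ (operator norm via $1/|\Im z|$ on $D_1$, or $\|A\| \leq 2.5$ on $D_2$), together with a self-improving bootstrap: it bounds $\max_k |\coord_k^\top R^{(j)}\bone|$ in terms of $\max_k |\coord_k^\top R\bone|$ using identity \eqref{eq:eR1}, then closes the resulting inequality $\max_i |\coord_i^\top R\bone| \lesssim 1 + (\log n)^{1+\eps}(\max_k |\coord_k^\top R\bone|/\sqrt{d} + (\log n)^a)$ by the assumption $\sqrt{d} \gg (\log n)^{1+\eps}$. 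This requires no appeal to (c), no induction on matrix size, and yields exactly the exponent $1+\eps+a$ stated in \eqref{eq:wignerrowsumlaw}.
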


The proof follows ideas of \cite{EKYYsparse}, and we defer
this to Section \ref{sec:locallaw}.
As the spectral parameter $z$ is allowed to converge to the interval $[-2,2]$
with increasing $n$,
this type of result is often called a ``local law'' in the random matrix theory
literature. The focus of the above is a bit different from the results
stated in \cite{EKYYsparse}, as we wish
to obtain explicit logarithmic bounds 
for $|\Im z| \asymp 1/\polylog(n)$, rather than bounds for more local
spectral parameters down to the scale of $|\Im z| \asymp \polylog(n)/n$.




\section{Proof of correctness for GRAMPA}
\label{sec:pf-main}

In this section, we prove Theorem \ref{thm:diag-dom}.
Note that the mapping $B \mapsto \Pi_*^\top B\Pi_*$ for any permutation $\Pi_*$
induces $w_j \mapsto \Pi_*^\top w_j$ and $X \mapsto X\Pi_*$, since
$\bJ\Pi_*^\top=\bJ$. By virtue of this equivariance,
throughout the proof, we may assume without loss of generality that $\Pi_* = \bI$, i.e.
the underlying true permutation $\pi_*$ is the identity permutation.
Then we aim to show that $X$ is diagonally dominant, in the sense
that $\min_{k} X_{kk} > \max_{k\neq \ell} X_{k\ell}$. 

In view of \prettyref{lmm:normbound},
we have that $\|A\| \le 2.5$ holds with probability $1-n^{-D}$ for any $D>0$ and
all $n \geq n_0(D)$. In the following, we assume that $\|A\| \le 2.5$ holds.
On this event,
by \prettyref{prop:res-rep}, we get that 
\begin{align}
X_{k\ell} = \frac{1}{2\pi} \Re \oint_{\Gamma}
(\coord_k^\top R_A(z) \bone )
(\coord_\ell^\top R_B(z+\iu \eta) \bone) dz\label{eq:contourrep-wigner}
\end{align}

Note that one may attempt to directly apply  \prettyref{eq:wignerrowsumlaw} to
bound the row sums $\coord_k^\top R_A(z) \bone$ and $\coord_\ell^\top R_B(z+\iu \eta) \bone$. This would yield
\[\left|(\coord_k^\top R_A(z) \bone)
(\coord_\ell^\top R_B(z+\iu \eta) \bone)\right|
\lesssim   (\log n)^{2+2\eps+2a},\]
and hence $|X_{k\ell}| \lesssim (\log n)^{2+2\eps+2a}$.
However, this estimate is too crude to capture the differences between the diagonal and off-diagonal entries.  In fact, the row sum $\coord_k^\top R_A(z) \bone$ does \emph{not} concentrate on its mean, and the deviation $\coord_k^\top R_A(z) \bone-m_0(z)$ and  $\coord_\ell^\top R_B(z+\iu \eta) \bone-m_0(z)$ is uncorrelated for $k \neq \ell$ and positively correlated for $k=\ell$. For this reason, the diagonal entries of \prettyref{eq:contourrep-wigner} dominate the off-diagonals. Thus it is crucial to gain a better understanding of the deviation terms. We do so by applying Schur complement decomposition.


\subsection{Decomposition via Schur complement}

We recall the classical Schur complement identity for
the inverse of a block matrix.
\begin{lemma}[Schur complement identity] \label{lem:schur}
For any invertible matrix $M \in \C^{n \times n}$ and block decomposition
\[M=\begin{bmatrix} A & B \\ C & D \end{bmatrix},\]
if $D$ is square and invertible, then
\begin{align}
M^{-1}=\begin{bmatrix} S & -SBD^{-1} \\ -D^{-1}CS & D^{-1}+D^{-1}CSBD^{-1}
\end{bmatrix}
\label{eq:schur}
\end{align}
where $S=(A-BD^{-1}C)^{-1}$.
\end{lemma}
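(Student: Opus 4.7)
The plan is to prove this classical identity by explicit block factorization (i.e., a block LDU decomposition of $M$), which reduces the inversion to inverting block triangular matrices whose inverses are immediate.

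The key step is to observe that when $D$ is invertible, $M$ admits the factorization
\[
M = \begin{bmatrix} I & BD^{-1} \\ 0 & I \end{bmatrix}
\begin{bmatrix} A - BD^{-1}C & 0 \\ 0 & D \end{bmatrix}
\begin{bmatrix} I & 0 \\ D^{-1}C & I \end{bmatrix},
\]
which I would verify by multiplying out the right-hand side; the cross term $BD^{-1}\cdot D^{-1}C$ is exactly what is needed to restore $A$ in the $(1,1)$ block. From this factorization, invertibility of $M$ (given that $D$ is invertible) is equivalent to invertibility of $A - BD^{-1}C$, so $S$ is well defined. Inverting the three factors term by term, each block triangular factor has the explicit inverse obtained by flipping the sign of its off-diagonal block, and the block diagonal factor inverts componentwise, giving
\[
M^{-1} = \begin{bmatrix} I & 0 \\ -D^{-1}C & I \end{bmatrix}
\begin{bmatrix} S & 0 \\ 0 & D^{-1} \end{bmatrix}
\begin{bmatrix} I & -BD^{-1} \\ 0 & I \end{bmatrix}.
\]
Multiplying these three matrices (left-to-right) then collapses to exactly the claimed formula: the $(1,1)$ block is $S$, the $(1,2)$ block is $-SBD^{-1}$, the $(2,1)$ block is $-D^{-1}CS$, and the $(2,2)$ block is $D^{-1} + D^{-1}CSBD^{-1}$.

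There is no substantive obstacle; the result is pure linear algebra and the only care needed is in tracking signs and the ordering of blocks in the triple product. An equally short alternative would be direct verification: plug the proposed expression for $M^{-1}$ into the product $M \cdot M^{-1}$ and check block by block that one obtains the identity, using only the definition $S^{-1} = A - BD^{-1}C$ and the invertibility of $D$. I would likely present the factorization approach, since it also makes transparent the determinant identity $\det M = \det(A - BD^{-1}C)\det D$ used implicitly to justify that $S$ exists whenever $M$ and $D$ are both invertible.
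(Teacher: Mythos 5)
Your proof is correct. The paper itself does not provide a proof of this lemma at all---it introduces the statement with ``We recall the classical Schur complement identity'' and treats it as known background, so there is nothing in the source to compare against. Your block LDU factorization is the standard textbook derivation, and I checked the products: the factorization of $M$ recovers the correct blocks (the cross term $BD^{-1}\cdot D^{-1}C$ cancels against $-BD^{-1}C$ in the $(1,1)$ position, wait---more precisely, expanding gives $(A-BD^{-1}C)+BD^{-1}C=A$), and the triple product for $M^{-1}$ collapses to exactly \eqref{eq:schur}. The only point worth being slightly more explicit about is your remark that invertibility of $M$ together with invertibility of $D$ forces $A-BD^{-1}C$ to be invertible: this follows from the factorization since the two outer block-triangular factors are always invertible, so $\det M = \det(A-BD^{-1}C)\det D$ and hence $\det(A-BD^{-1}C)\neq 0$. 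You noted this implicitly; stating it once keeps $S$ from appearing to be assumed rather than deduced.
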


We decompose $\coord_k^\top R_A(z) \bone$
and $\coord_\ell^\top R_B(z+\iu \eta) \bone$
using this identity, focusing without loss of generality
on $(k,\ell)=(1,2)$. Let $R_{A,12} \in \C^{2 \times 2}$
be the upper-left $2 \times 2$ sub-matrix of $R_A$, and let
$R_A^{(12)} \in \C^{(n-2) \times (n-2)}$
be the resolvent of the $(n-2) \times (n-2)$ minor of $A$ with
the first two rows and columns removed. Let $a_1^\top $ and $a_2^\top $ be the
the first two rows of $A$ with first two entries removed, and let $A_o^\top  \in \R^{2
\times (n-2)}$ be the stacking of $a_1^\top $ and $a_2^\top $. 

The following deterministic lemma approximates $\coord_1^\top R_A(z) \bone$
based on the Schur complement. 
\begin{lemma} \label{lmm:RSRapprox}
Suppose $|z| \leq 10$, and
\begin{align}\label{eq:RSRapprox_assump}
 \left\| R_{A,12}(z)-m_0(z) \bI \right\| \le \delta
\end{align}
where $0 \leq \delta \leq \min_{z:|z| \leq 10} |m_0(z)|/2$.
Then for a constant $C>0$ and $k=1,2$
\begin{align}
\left|\coord_k^\top R_A(z) \bone-m_0(z) 
\left( 1  -  a_k^\top R_A^{(12)}(z) \bone_{n-2}  \right)\right|
\leq C\delta \left( 1 + \|R_A(z) \bone \|_\infty  \right).
\label{eq:a-approx-wigner}
\end{align}
\end{lemma}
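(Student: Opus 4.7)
The plan is to apply the Schur complement identity of Lemma~\ref{lem:schur} to the block decomposition of $A-zI_n$ with the leading $2\times 2$ block indexed by $\{1,2\}$, and then exploit hypothesis \eqref{eq:RSRapprox_assump} to replace $R_{A,12}(z)$ by the scalar matrix $m_0(z) I_2$. Writing
$$A - zI_n = \begin{pmatrix} A_{12,12} - zI_2 & A_o^\top \\ A_o & A^{(12)} - zI_{n-2} \end{pmatrix}$$
and invoking \eqref{eq:schur} with $D = A^{(12)} - zI_{n-2}$, the first two rows of $R_A(z)$ read $\bigl[\,R_{A,12}(z),\; -R_{A,12}(z)\,A_o^\top R_A^{(12)}(z)\bigr]$. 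Summing against $\bone_n$ on the right and extracting the $k$-th coordinate yields the clean identity, for $k \in \{1,2\}$,
$$\coord_k^\top R_A(z) \bone_n \;=\; \coord_k^\top R_{A,12}(z)\,\bigl(\bone_2 - \vec{\alpha}\bigr), \qquad \vec{\alpha} \defn \bigl(a_1^\top R_A^{(12)}(z)\bone_{n-2},\; a_2^\top R_A^{(12)}(z)\bone_{n-2}\bigr)^\top \in \C^2.$$

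Setting $E \defn R_{A,12}(z) - m_0(z) I_2$, so that $\|E\| \le \delta$ by the hypothesis, and noting that $m_0(z)\bigl(1 - a_k^\top R_A^{(12)}(z) \bone_{n-2}\bigr) = m_0(z)\coord_k^\top(\bone_2 - \vec{\alpha})$ since $\coord_k^\top \bone_2 = 1$, the quantity to bound simplifies to
$$\coord_k^\top R_A(z) \bone_n - m_0(z)\bigl(1 - a_k^\top R_A^{(12)}(z) \bone_{n-2}\bigr) \;=\; \coord_k^\top E\,(\bone_2 - \vec{\alpha}),$$
which is at most $\delta\,\|\bone_2 - \vec{\alpha}\|_2$ in modulus. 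The task reduces to showing $\|\bone_2 - \vec{\alpha}\|_2 \lesssim 1 + \|R_A(z)\bone\|_\infty$.

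To close the loop, I would stack the identity from the first step over $k=1,2$, obtaining
$$R_{A,12}(z)\,(\bone_2 - \vec{\alpha}) \;=\; \bigl(\coord_1^\top R_A(z) \bone_n,\; \coord_2^\top R_A(z) \bone_n\bigr)^\top.$$
By Proposition~\ref{prop:m0wigner}, $|m_0(z)| \asymp 1$ on $|z| \le 10$, and the hypothesis $\delta \le |m_0(z)|/2$ implies via a Neumann series that $R_{A,12}(z) = m_0(z)I_2 + E$ is invertible with $\|R_{A,12}(z)^{-1}\| \le 2/|m_0(z)| = O(1)$. Inverting gives $\|\bone_2 - \vec{\alpha}\|_2 \lesssim \|R_A(z)\bone\|_\infty$, and substituting back produces the claimed bound $C\delta(1 + \|R_A(z)\bone\|_\infty)$.

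Unlike most of the paper's machinery, this lemma is purely deterministic linear algebra, so no concentration or local-law input is required. The main subtlety is that one should \emph{not} attempt to bound $\|\vec{\alpha}\|_2$ directly, since a pointwise estimate on $a_k^\top R_A^{(12)}(z)\bone_{n-2}$ is a priori unavailable. The key trick is to observe that $\bone_2 - \vec{\alpha}$ itself is the image of the controlled vector $(\coord_k^\top R_A(z) \bone_n)_{k=1,2}$ under the easily invertible matrix $R_{A,12}(z)$; this self-bootstrapping is what lets the final bound depend only on $\|R_A(z)\bone\|_\infty$.
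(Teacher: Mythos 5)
Your proof is correct and follows essentially the same route as the paper: apply the Schur complement identity to express the first two rows of $R_A(z)$, replace $R_{A,12}(z)$ by $m_0(z)\bI_2$ with an $O(\delta)$ error, and control $\bone_2 - A_o^\top R_A^{(12)}\bone_{n-2}$ by inverting the well-conditioned block $R_{A,12}(z)$ to write it in terms of $(\coord_1^\top R_A\bone, \coord_2^\top R_A\bone)$. The only cosmetic difference is that you bound $\|\bone_2 - \vec{\alpha}\|$ directly while the paper passes through $\|\vec{\alpha}\|$ via the triangle inequality; both use the same identity \eqref{eq:RA12_approx} and invertibility of $R_{A,12}(z)$.
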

\begin{proof}
It suffices to consider $k=1$.
Applying the
Schur complement identity~\eqref{eq:schur}, the first two rows of $R_A$ are given by
\begin{equation}\label{eq:A12}
\begin{bmatrix} 
R_{A,12} & -R_{A,12}A_o^\top R_A^{(12)}
\end{bmatrix}.
\end{equation}
Thus
\begin{align*}
\coord_1^\top R_A(z) \bone
& =\begin{bmatrix}
1 & 0
\end{bmatrix} 
\begin{bmatrix} R_{A,12} & -R_{A,12}A_o^\top R_A^{(12)} 
\end{bmatrix}
\begin{bmatrix}
\bone_2 \\ \bone_{n-2}
\end{bmatrix} \\
&=
\begin{bmatrix}
1 & 0
\end{bmatrix}
R_{A,12}
\left(  \bone_2 -A_o^\top R_A^{(12)} \bone_{n-2} \right).
\end{align*}
Denote $\Delta_{A} \triangleq R_{A,12}(z) - m_0 (z) \bI$.
Then 
\begin{align}
 \coord_1^\top R_A(z) \bone 
&=\begin{bmatrix}
1 & 0
\end{bmatrix}
\left(m_0 (z) \bI + \Delta_{A} \right)
\left(  \bone_2 -A_o^\top R_A^{(12)} \bone_{n-2} \right).
 \nonumber \\
&= m_0(z) \left( 1 - a_1^\top R_A^{(12)} \bone_{n-2}\right)
+\begin{bmatrix}
1 & 0
\end{bmatrix} 
 \Delta_A 
\left(  \bone_2 -A_o^\top R_A^{(12)} \bone_{n-2} \right).
\nonumber \\
&= m_0(z) \left( 1 - a_1^\top R_A^{(12)} \bone_{n-2}\right) + O\left( \delta \left( 1 + \left\| A_o^\top R_A^{(12)} \bone_{n-2} \right\|\right)  \right),
\label{eq:RSR_Y_bound}
\end{align}
where the last equality applies \prettyref{eq:RSRapprox_assump}.
We next upper bound $\left\| A_o^\top R_A^{(12)} \bone_{n-2}\right\|$.
In view of the fact that 
 $C \geq |m_0(z)| \geq c$ for absolute constants $c$ and $C$, 
the assumption \prettyref{eq:RSRapprox_assump} implies that 
$R_{A,12}$ is invertible with $\|R_{A,12}^{-1}\| \lesssim 1$. 
Using \prettyref{eq:A12} again, we have 
\begin{align}
 A_o^\top R_A^{(12)} \bone_{n-2} =  \bone_2 - R_{A,12}^{-1}
 \begin{bmatrix}
 \coord_1 & \coord_2
 \end{bmatrix}^\top
 R_A \bone_n.
 \label{eq:RA12_approx}
\end{align}
It follows that  
\begin{align}
\left\| A_o^\top R_A^{(12)} \bone_{n-2}\right\|
&\lesssim 1  + 
\left| \coord_1^\top R_A \bone_n   \right| +\left| \coord_2^\top R_A \bone_n   \right|    \lesssim 1 + \left\| R_A \bone_n \right\|_\infty.  \label{eq:RA12_approx2}
\end{align}
The desired bound \prettyref{eq:a-approx-wigner} follows by combining \prettyref{eq:RSR_Y_bound} and \prettyref{eq:RA12_approx2}.
\end{proof}

\subsection{Off-diagonal entries}
Without loss of generality, we focus on the off-diagonal entry $X_{12}$:
\[
X_{12}
=\frac{1}{2\pi}
\Re \oint_{\Gamma} \left(\coord_1^\top R_A(z) \bone \right) 
\left(\coord_2^\top R_B(z+ \iu \eta) \bone \right)  dz.
\]

For the given value $a>0$ in Theorem \ref{thm:diag-dom}, and for some small
constant $\eps>0$,
let $b,b'$ be as defined in Theorem \ref{thm:locallawwigner}. Under the given
condition for $c_0$ in Theorem \ref{thm:diag-dom}, for
$\eps>0$ sufficiently small, we have $c_0>b'$ and $c_0>2b$---thus $d \gg (\log
n)^{b'}$ so Theorem \ref{thm:locallawwigner} applies, and also $\sqrt{d} \gg
(\log n)^b$. Fix the constant $\kappa$, where $\kappa=1$ in the
sub-Gaussian case where
$\|a_{ij}\|_{\psi_2},\|b_{ij}\|_{\psi_2} \lesssim 1/\sqrt{n}$, and $\kappa>2$
otherwise. For ease of notation, we define
\begin{equation}
\quad \delta_1 = \frac{\left(\log n \right)^{2+2\eps+3a/2} }{\sqrt{d}},
\quad \delta_2 = \frac{\left(\log n \right)^{1+\eps+a} }{\sqrt{n}},
 \quad \delta_3 =\frac{\left(\log n \right)^b }{\sqrt{d}},
 \quad \delta_4 = \frac{\left(\log n \right)^{\kappa/2} }{\sqrt{n}}.
\label{eq:deltas}
\end{equation}
Note that we have $\delta_i=o(1)$ for each $i=1,2,3,4$, and also
$\delta_1 \delta_2^2 n =o(1)$.

\subsubsection{Resolvent approximation}
Define an event $\calE_1$ wherein the following hold simultaneously for 
all $z \in \Gamma$:
\begin{align}
\left\| R_{A,12}(z)-m_0(z)\bI \right\|  &   \lesssim \delta_1  \label{eq:RA12_local}\\
\left\| R_{B,12}(z+\iu \eta )-m_0(z+\iu \eta )\bI \right\| & \lesssim \delta_1 \label{eq:RB12_local}\\
\left\|  R_A (z) \bone\right\|_\infty &
\lesssim \delta_2 \sqrt{n}  \label{eq:RAS_local}\\
\left\|  R_B (z+\iu\eta)\bone\right\|_\infty &
\lesssim  \delta_2 \sqrt{n} \label{eq:RBS_local}.
\end{align}
Applying the resolvent approximations
given in \prettyref{thm:locallawwigner}, we have that 
$$
\prob{\calE_1} \ge 1-e^{-c(\log n)(\log \log n)}.
$$
In the following, we assume the event $\calE_1$ holds. 

On $\calE_1$, by \prettyref{lmm:RSRapprox}, we get that uniformly over
$z \in \Gamma$,
\begin{align}
\coord_1^\top R_A(z) \bone  &=  m_0(z) 
\left( 1  -  a_1^\top R_A^{(12)} \bone_{n-2}  \right) + 
O\left(  \delta_1  \delta_2\sqrt{n} \right), \label{eq:e_1R_A1}\\
\coord_2^\top R_B(z+\iu\eta) \bone  &=  m_0(z+\iu\eta) 
\left( 1  -  b_2^\top R_B^{(12)} \bone_{n-2}  \right) + 
O\left(  \delta_1  \delta_2 \sqrt{n} \right) . \label{eq:e_2R_B1}
\end{align}
Each of (\ref{eq:e_1R_A1}) and (\ref{eq:e_2R_B1}) is
itself $O(\delta_2\sqrt{n})$,
by \prettyref{eq:RAS_local} and \prettyref{eq:RBS_local}. Then
multiplying the two, we have
\begin{align*}
& \left[\coord_1^\top R_A(z) \bone \right] 
\left[\coord_2^\top R_B(z+ \iu \eta) \bone \right]  \\
& =m_0(z) m_0(z+ \iu \eta )  \left( 
1 -a_1^\top R_A^{(12)} \bone_{n-2} - b_2^\top R_B^{(12)} \bone_{n-2}
+ a_1^\top R_A^{(12)} \bJ_{n-2} R_B^{(12)} b_2  \right)
+ O\left( \delta_1 \delta_2^2 n \right).
\end{align*}
It follows that 
\begin{align}
&\oint_{\Gamma} \left[\coord_1^\top R_A(z) \bone \right] 
\left[\coord_2^\top R_B(z+ \iu \eta) \bone \right] dz \nonumber \\
& =  \oint_{\Gamma} m_0(z) m_0(z+ \iu \eta ) d z
-  a_1^\top  g -  b_2^\top  h + a_1^\top  M b_2+ 
 O\left(  \delta_1  \delta^2_2  n    \right),  \label{eq:contour-goal1-wigner}
\end{align}
where
\begin{align}
g \triangleq & ~ \oint_{\Gamma} m_0(z)m_0(z+ \iu \eta )R_A^{(12)}(z) \bone_{n-2}    dz , \nonumber \\
h \triangleq & ~  \oint_{\Gamma} m_0(z)m_0(z+ \iu \eta )R_B^{(12)}(z+ \iu \eta ) \bone_{n-2}   dz , \nonumber \\
M \triangleq & ~ \oint_{\Gamma} m_0(z)m_0(z+ \iu \eta )R_A^{(12)}(z) \bJ_{n-2}  R_B^{(12)}(z+ \iu \eta )dz. \label{eq:M-expression-wigner}
\end{align}

\subsubsection{Term-by-term analysis}
\label{sec:offdiag-term}
Next, we bound the individual terms of \prettyref{eq:contour-goal1-wigner}.
By the boundedness of $m_0(z)$, we have
\begin{align}
 \oint_{\Gamma} m_0(z) m_0(z+  \iu \eta )  d z
= O(1). \label{eq:m_0_quadratic}
\end{align}

Define the event $\calE_2$ wherein the following hold simultaneously:
\begin{align}
\left| a_1^\top  g \right| + \left| b_2^\top  h \right| & \lesssim 
\delta_1  \left(\| g \|_\infty + \|h\|_\infty  \right)  + \delta_4
 \left(\| g \|_2 + \|h\|_2  \right)    \label{eq:lin-term-wigner} \\
 \left| a_1^\top M b_2 \right| & \lesssim  
\delta_1 \|M\|_\infty+ \delta_4^2 \|M\|_F . \label{eq:qua-term-wigner}
\end{align}
Note that the triple $(g, h, M)$ is independent of the pair $(a_1, b_2)$
and $a_1$ and $b_2$ are independent. 
Hence, by first conditioning on $(g, h, M)$ and then applying
\prettyref{eq:linearconcentration} and \prettyref{eq:quad2}, we get that 
$$
\prob{\calE_2} \ge 1-n^{-D}
$$
for any constant $D>0$,\footnote{The constant $D$ can be made arbitrarily large by setting the hidden constants in \prettyref{eq:lin-term-wigner} and \prettyref{eq:qua-term-wigner} sufficiently large.} and all $n \geq n_0(D)$,
in both the sub-Gaussian ($\kappa=1$) and general ($\kappa>2$) cases.
Henceforth, we assume $\calE_2$ holds. It then remains to bound the $\ell_2$ and
$\ell_\infty$ norms of $g$, $h$, and $M$.

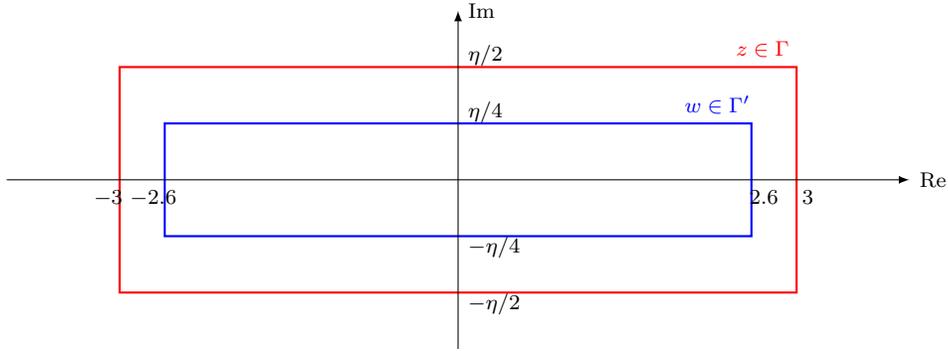
\begin{figure}[ht]%
\centering
\begin{tikzpicture}[scale=1.5,font=\scriptsize,>=latex']
\def\eps{0.1}
\draw[red,thick] (-3,-1) rectangle (3,1);
\draw[blue,thick] (-2.6,-0.5) rectangle (2.6,0.5);
\node[above,red] at (3-3*\eps,1) {$z\in\Gamma$};
\node[above,blue] at (2.6-3*\eps,0.5) {$w\in\Gamma'$};

\node[below] at (-3-\eps,0) {$-3$};
\node[below] at (-2.6-\eps,0) {$-2.6$};
\node[below] at (3+\eps,0) {$3$};
\node[below] at (2.6+1.1*\eps,0) {$2.6$};
\node[right] at (0,1+\eps) {$\eta/2$};
\node[right] at (0,-1-\eps) {$-\eta/2$};
\node[right] at (0,1/2+\eps) {$\eta/4$};
\node[right] at (0,-1/2-\eps) {$-\eta/4$};

\draw[-latex] (-4,0) -- (4,0) node[right] {$\text{Re}$};
\draw[-latex] (0,-1.5) -- (0,1.5) node[right,black] {$\text{Im}$};
\end{tikzpicture} 

\caption{Nested contours $\Gamma$ and $\Gamma'$.}%
\label{fig:contour}%
\end{figure}

Recall that $\Gamma$ is the rectangular contour with vertices $\pm 3\pm \iu \frac{\eta}{2}$.
Let us define another contour (to be used later) $\Gamma'$ inside $\Gamma$, 
with vertices $\pm 2.6 \pm \iu \frac{\eta}{4}$,
cf.~\prettyref{fig:contour}.
Define the event $\calE_3$ wherein the following hold simultaneously for all $z \in \Gamma \cup \Gamma'$:
\begin{align}
\left\| R_A^{(12)}(z) \bone_{n-2}
\right\|_\infty & \lesssim  \delta_2 \sqrt{n}, \label{eq:RAt_2_bound}\\
\left\|R_B^{(12)}(z+\iu \eta ) \bone_{n-2}\right\|_\infty & \lesssim \delta_2 \sqrt{n}, \label{eq:RBs_1_bound}\\
\left| \bone_{n-2}^\top R_A^{(12)}(z) \bone_{n-2} - m_0(z) (n-2) \right|
& \lesssim \delta_3 n, \label{eq:xRA12x_bound} \\
\left| \bone_{n-2}^\top R_B^{(12)}(z+\iu \eta) \bone_{n-2} - m_0(z+\iu \eta) 
(n-2) \right|
& \lesssim \delta_3n.\label{eq:yRB12y_bound}
\end{align}
By \prettyref{thm:locallawwigner}, we have that 
$\prob{\calE_3} \ge 1-e^{-c(\log n)(\log \log n)}$.
In the following, we assume the event $\calE_3$ holds. 

Note that
\begin{equation}
\|g\|_\infty \lesssim \sup_{z \in \Gamma} \|R_A^{(12)}(z) \bone_{n-2} \|_\infty \lesssim \delta_2 \sqrt{n},
\label{eq:ginfty}
\end{equation}
where the second inequality holds in view of \prettyref{eq:RAt_2_bound}.
Similarly, in view of \prettyref{eq:RBs_1_bound}, we have that $
\|h\|_\infty \lesssim  \delta_2 \sqrt{n}$.
Furthermore, 
\begin{equation}
\|M\|_\infty \lesssim 
\sup_{z \in \Gamma}
\left\| R_A^{(12)}(z) \bJ_{n-2}  R_B^{(12)}(z+ \iu \eta )\right\|_\infty
\le \sup_{z \in \Gamma} \left\| R_A^{(12)}(z) \bone_{n-2} \right\|_\infty
\left\| \bone_{n-2}^\top R_B^{(12)}(z+ \iu \eta ) \right\|_\infty
\lesssim  \delta_2^2 n.
\label{eq:Minfty}
\end{equation}
The $\ell_2$ bounds of $g,h$ and $M$ are deferred to \prettyref{lmm:M_norm} below.
Applying  \prettyref{eq:xRA12x_bound},  \prettyref{eq:yRB12y_bound}, and
\prettyref{lmm:M_norm} with $R_A=R_A^{(12)}$ and $R_B=R_B^{(12)}$,
we get 
$\|g\|_2^2 \lesssim  n \log \frac{1}{\eta}$,
$\|h\|_2^2 \lesssim n \log \frac{1}{\eta} $
and
$
\|M\|_F \lesssim  n/\sqrt{\eta} .
$

Combining the above bounds on the norms of $g, h, M$ 
with~\eqref{eq:lin-term-wigner}, \eqref{eq:qua-term-wigner}, 
and \prettyref{eq:m_0_quadratic}, and plugging
into~\eqref{eq:contour-goal1-wigner}, we conclude that on the
event $\{\|A\| \leq 2.5\} \cap \calE_1\cap \calE_2 \cap \calE_3$, 
\begin{align}
|X_{12}| 
= & ~  2\pi \left|\oint_{\Gamma} [\coord_1^\top R_A(z) \bone ][\coord_2^\top R_B(z+ \iu \eta ) \bone ]dz \right| \nonumber \\
\lesssim  & ~ 	1+  \delta_4 \sqrt{n \log \frac{1}{\eta} }
+ \delta_4^2 n 
\frac{1}{\sqrt{\eta} } + \delta_1  \delta^2_2 n \lesssim \delta_4^2 n 
\frac{1}{\sqrt{\eta} } =
\left(\log n\right)^{\kappa} \frac{1}{\sqrt{\eta} },
\label{eq:basicdecomp-offdiag}
\end{align}
where in the third step we used $\delta_1 \delta^2_2 n =o(1)$
and $\eta \le 1$ so that $\delta_4 \sqrt{n} = (\log n)^{\kappa/2} \gtrsim \sqrt{\eta \log\frac{1}{\eta}} + \eta^{1/4}$.



\subsubsection{Bounding the norms of $g, h$ and $M$}

\begin{lemma}\label{lmm:M_norm}
Suppose $\|A\| \le 2.5$ and
$\left| \bone^\top R(z) \bone \right| \lesssim n$ for
all $z \in \Gamma \cup \Gamma'$ and both $R(z)=R_A(z)$ and
$R(z)=R_B(z+\iu \eta)$.
Define
\begin{align*}
g&= \oint_{\Gamma}  m_0(z)m_0(z+ \iu \eta )R_A(z) \bone dz\\
h&= \oint_{\Gamma}  m_0(z)m_0(z+ \iu \eta )R_B(z+\iu \eta) \bone dz\\
M&= \oint_{\Gamma} m_0(z) m_0(z+\iu \eta ) R_A(z) \bJ  R_B (z+ \iu \eta )dz.
\end{align*}
Then $\|g\|^2 \lesssim n \log \frac{1}{\eta}$, $\|h\|^2 \lesssim n \log \frac{1}{\eta}$ and $\|M\|_F^2 \lesssim
\frac{n^2}{\eta}$.
\end{lemma}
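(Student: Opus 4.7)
My plan is to express $\|g\|^2$, $\|h\|^2$, and $\|M\|_F^2$ as double contour integrals via the adjoint representation of each single-contour integral, and then use the resolvent identity $R(z_1) R(z_2) = (R(z_1) - R(z_2))/(z_1 - z_2)$ to reduce resolvent products to scalar quantities depending only on $s_A(z) \defn \bone^\top R_A(z) \bone$ and $s_B(z) \defn \bone^\top R_B(z) \bone$. The hypothesis $|s_A(z)|,\,|s_B(z+\iu\eta)| \lesssim n$, together with the Ward identity (Lemma~\ref{lem:ward}), will then yield pointwise estimates on the integrand, and the claimed $\log(1/\eta)$ and $1/\eta$ factors will emerge from carefully integrating along the four segments of $\Gamma$.

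Concretely, for $\|g\|^2$ I would apply $\overline{\oint_\Gamma f(z)\,dz} = -\oint_\Gamma \overline{f(\bar z)}\,dz$ (valid because the conjugate of $\Gamma$ equals $\Gamma$ with reversed orientation) together with $\overline{R_A(z)} = R_A(\bar z)$, $\overline{\phi(z)} = \tilde\phi(\bar z)$ where $\tilde\phi(w) \defn m_0(w)\,m_0(w-\iu\eta)$, and $R_A(w)^\top = R_A(w)$, to derive
\[
\|g\|^2 \;=\; -\oint_\Gamma \oint_\Gamma \phi(z)\,\tilde\phi(w)\,\bone^\top R_A(z) R_A(w) \bone\; dz\, dw.
\]
The inner factor equals $(s_A(z) - s_A(w))/(z-w)$, giving the pointwise bound $\lesssim n/|z-w|$; alternatively, Cauchy--Schwarz and the Ward identity $\|R_A(z)\bone\|^2 = \Im s_A(z)/\Im z$ yield $\lesssim n/\eta$ uniformly on the top and bottom segments of $\Gamma$ (where $|\Im z|,|\Im w| \asymp \eta$). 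Taking the minimum gives the sharp estimate $\lesssim n/\max(|z-w|, \eta)$. Parameterizing $\Gamma$ and using $\int_{-3}^{3}\int_{-3}^{3} \frac{dx\,dx'}{\max(|x-x'|, \eta)} \lesssim \log(1/\eta)$ (split into $|x-x'| \le \eta$ and $|x-x'| > \eta$), while noting that the $O(\eta)$-length side segments contribute only $O(1)$, yields $\|g\|^2 \lesssim n\log(1/\eta)$. The bound for $\|h\|^2$ follows identically with $R_B(\cdot + \iu\eta)$ replacing $R_A$.

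For $\|M\|_F^2$ the same construction, now keeping both outer-product directions, gives
\[
\|M\|_F^2 \;=\; -\oint_\Gamma \oint_\Gamma \phi(z)\,\tilde\phi(w)\,\bigl[\bone^\top R_A(z) R_A(w) \bone\bigr]\bigl[\bone^\top R_B(z+\iu\eta) R_B(w-\iu\eta) \bone\bigr]\, dz\, dw.
\]
The $A$-factor is bounded by $\lesssim n/\max(|z-w|, \eta)$ as above. The $B$-factor, via the resolvent identity, equals $(s_B(z+\iu\eta) - s_B(w-\iu\eta))/((z-w) + 2\iu\eta)$, and since $|\Im(z-w)| \le \eta$ on $\Gamma$ we have $|(z-w) + 2\iu\eta| \ge \eta$, hence it is $\lesssim n/|(z-w) + 2\iu\eta|$. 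Multiplying the two and integrating, the top--top contribution reduces to $n^2 \int\int \frac{dx\,dx'}{\max(|x-x'|, \eta)\sqrt{(x-x')^2 + 4\eta^2}}$, and the top--bottom contribution to $n^2 \int\int \frac{dx\,dx'}{\sqrt{(x-x')^2 + \eta^2}\sqrt{(x-x')^2 + 9\eta^2}}$; each is $O(n^2/\eta)$ by separately estimating the near-diagonal $|x-x'| \le \eta$ and far-diagonal regimes. Summing the four main segment combinations, plus negligible side contributions, gives $\|M\|_F^2 \lesssim n^2/\eta$.

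The main obstacle is avoiding the naive bounds $n/\eta$ (for $\|g\|^2$) and $n^2/\eta^2$ (for $\|M\|_F^2$) that would result from applying Cauchy--Schwarz directly to the single-contour representation and using $\|R(z)\bone\|^2 \lesssim n/|\Im z|$. Capturing the sharper $\log(1/\eta)$ and $1/\eta$ factors requires genuine two-dimensional cancellation exposed through the adjoint double-contour representation combined with the resolvent identity, and then a careful case analysis on the segments of $\Gamma$ so that the integrable $1/\max(|z-w|, \eta)$ kernel is preserved in the final estimate.
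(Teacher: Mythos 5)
Your proposal is correct but takes a genuinely different route from the paper's proof for handling the near-diagonal singularity in the double contour integral. Both of you reduce $\|g\|^2$, $\|h\|^2$, $\|M\|_F^2$ to double integrals via conjugation symmetry and the resolvent identity $R(z)R(w) = (R(z)-R(w))/(z-w)$, arriving at kernels of the form $n/|z-w|$ (resp.\ $n^2/(|z-w|\,|z-w+2\iu\eta|)$). The difference is what to do about the potential blow-up as $z\to w$. The paper exploits analyticity of the integrand on the event $\|A\|\le 2.5$ to deform one of the two contours to a strictly nested inner contour $\Gamma'$ (vertices $\pm 2.6 \pm \iu\eta/4$); with $z\in\Gamma$, $w\in\Gamma'$ one has $|z-w|\gtrsim\eta$ automatically, so the resolvent-identity bound alone suffices after the deformation. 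You instead keep both integrations on $\Gamma$ and cap the kernel near the diagonal by a second pointwise bound: Cauchy--Schwarz $|\bone^\top R_A(z)R_A(w)\bone| \le \|R_A(z)\bone\|\,\|R_A(w)\bone\|$ combined with the Ward identity $\|R_A(z)\bone\|^2 = \Im s_A(z)/\Im z \lesssim n/\eta$ on the horizontal segments, giving the integrable estimate $n/\max(|z-w|,\eta)$. Both versions yield the same $n\log(1/\eta)$ and $n^2/\eta$ final bounds; the deformation approach is mildly cleaner once analyticity is noted, while your min-of-two-bounds approach avoids the deformation step at the cost of a short case analysis. One small imprecision: the vertical segments of $\Gamma$ (where $|\Im z|$ can vanish, so the Ward bound degenerates) should be treated separately using $\|R_A(z)\|\le 2$ there (from $\|A\|\le 2.5$ and $|\Re z|=3$); their contribution is $O(n)$ (resp.\ $O(n^2)$), not $O(1)$ as stated, but still negligible.
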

\begin{proof}
Since $\|A\| \le 2.5$, the function 
$m_0(z) m_0(z+\iu \eta ) R_A(z) \bone$ is analytic in $z$ in the region between $\Gamma'$ and $\Gamma$. It follows that 
$$
g= \oint_{\Gamma} m_0(z)m_0(z+ \iu \eta )R_A(z) \ones dz = \oint_{\Gamma'}
m_0(w)m_0(w+ \iu \eta )R_A(w) \ones dw.
$$
Thus
\begin{align}
\|g\|^2
\overset{(a)} = & ~ \oint_{\Gamma} dz\oint_{\Gamma'} dw\;m_0(z)m_0(z+ \iu \eta )
m_0(\bar{w})m_0(\bar{w}-\iu \eta ) \ones^\top R_A(\bar{w}) R_A(z)  \ones     \nonumber \\
\overset{(b)} = & ~ -\oint_{\Gamma} dz\oint_{\Gamma'} dw\;m_0(z)m_0(z+ \iu \eta )
m_0(w)m_0(w-\iu \eta ) \ones^\top R_A(w) R_A(z)  \ones     \nonumber \\
\overset{(c)}{=} & ~ -\oint_{\Gamma} dz\oint_{\Gamma'} dw\;m_0(z)m_0(z+ \iu \eta ) m_0(w)m_0(w- \iu \eta ) \ones^\top \frac{R_A(z) - R_A(w)}{z-w}  \ones  \nonumber \\
\overset{(d)}{\lesssim}  & ~ n \oint_{\Gamma} dz \oint_{\Gamma'}
\frac{1}{|z-w|} \label{eq:gnormdeform} 
\end{align}
where (a) applies conjugation symmetry of $m_0$ and $R_A$; (b) changes variables
$w \mapsto \bar{w}$ which reverses the direction of integration along $\Gamma'$;
(c) follows from the identity
\begin{equation}
R_A(z)R_A(w)=(A-z)^{-1}(A-w)^{-1}=\frac{1}{z-w}[(A-z)^{-1}-(A-w)^{-1}]
=\frac{1}{z-w}[R_A(z)-R_A(w)]
\label{eq:RR}
\end{equation}
and (d) holds because $|m_0(z)| \asymp 1$ and 
$\left| \bone^\top R_A(z) \bone \right| \lesssim n$ for all $z \in \Gamma \cup
\Gamma'$ by assumption.
For either $z$ or $w$ in the vertical strips of $\Gamma \cup \Gamma'$ of length
$O(\eta)$, we apply simply $|z-w| \gtrsim \eta$. For both $z$ and $w$ in the
horizontal strips, i.e.\ $|\Im z|=\eta/2$ and $|\Im w|=\eta/4$, we apply
$|z-w| \gtrsim |\Re(z)-\Re(w) |+ \eta$. This gives
\[\|g\|^2 \lesssim
n\left(1+\int_{-3}^{3} dx \int_{-2.6}^{2.6} dy \frac{1}{|x-y|
+\eta} \right) \lesssim n \log \frac{1}{\eta}.\]

For $\|h\|^2$, we have similarly
\begin{align*}
\|h\|^2&=-\oint_\Gamma dz \oint_{\Gamma'} dw\;
m_0(z)m_0(z+\iu \eta) m_0(w)m_0(w-\iu \eta)
\bone^\top \frac{R_B(z+\iu \eta)-R_B(w-\iu \eta)}{(z+\iu \eta)-(w-\iu \eta)}
\bone\\
&\lesssim n \oint_{\Gamma} dz \oint_{\Gamma'}
\frac{1}{|z-w+2\iu \eta|.}
\end{align*}
We may again bound $|z-w+2\iu \eta| \gtrsim \eta$ if either $z$ or $w$ belongs
to a vertical strip, or $|z-w+2\iu \eta| \gtrsim |\Re(z)-\Re(w)|+\eta$
otherwise, to obtain $\|h\|^2 \lesssim n \log (1/\eta)$.
%
%

Finally, we bound $\|M\|_F$. Since $\|A\| \le 2.5$, the function 
$m_0(z) m_0(z+\iu \eta ) R_A(z) \allones R_B (z+ \iu \eta )$ is analytic in $z$
in the region between $\Gamma'$ and $\Gamma$, so
$$
M
= \oint_{\Gamma} m_0(z) m_0(z+\iu \eta ) R_A(z) \allones  R_B (z+ \iu \eta )dz
= \oint_{\Gamma'} m_0(w) m_0(w+\iu \eta ) R_A(w) \allones  R_B (w+ \iu \eta )dw.
$$
Consequently, by the same arguments that leads to \prettyref{eq:gnormdeform},
\begin{align*}
& ~ \fnorm{M}^2 \\
= & ~ \Tr(M^*M) \\
= & ~ \oint_{\Gamma} dz \oint_{\Gamma'} dw\;  m_0(z) m_0(z+\iu \eta ) m_0(\overline{w}) m_0(\overline{w}-\iu \eta ) \Tr \big[ R_A(z) \ones\ones^\top  R_B (z+ \iu \eta ) R_B (\overline w- \iu \eta )\ones\ones^\top R_A(\overline w) \big]	\\
= & ~ -\oint_{\Gamma} dz \oint_{\Gamma'} dw\;  m_0(z) m_0(z+\iu \eta ) m_0(w) m_0(w-\iu \eta ) \ones^\top R_A(w)R_A(z) \ones
\ones^\top  R_B (z+ \iu \eta ) R_B (w- \iu \eta )\ones	\\
= & ~ -\oint_{\Gamma} dz \oint_{\Gamma'} dw\;  m_0(z) m_0(z+\iu \eta ) m_0(w) m_0(w-\iu \eta ) 
\frac{\ones^\top(R_A(z)-R_A(w))\ones}{z-w}
\frac{\ones^\top(R_B (z+ \iu \eta )- R_B (w- \iu \eta ))  \ones}{z+ \iu \eta - (w- \iu \eta)}	\\
\lesssim & ~ n^2 \oint_{\Gamma} dz \oint_{\Gamma'} dw 
\frac{1}{|z-w|} \frac{1 }{|z-w+ 2\iu \eta|}.
\end{align*}
If $z$ or $w$ belongs to a vertical strip of $\Gamma \cup \Gamma'$, of
length $O(\eta)$, then $|z-w| \cdot |z-w+2\iu \eta|
\gtrsim \eta^2$; otherwise,
$|z-w| \cdot |z-w+2\iu \eta| \gtrsim (|\Re(z)-\Re(w)|+\eta)^2
\gtrsim (\Re(z)-\Re(w))^2+\eta^2$. Then
\[\|M\|_F^2 \lesssim
n^2\left(\frac{1}{\eta}+
\int_{-3}^{3} dx \int_{-2.6}^{2.6} dy \frac{1}{(x-y)^2 +\eta^2}\right) \lesssim
\frac{n^2}{\eta}.
\]
\end{proof}

\subsection{Diagonal entries}
Without loss of generality, we consider the diagonal entry $X_{11}$:
\[
X_{11}
=\frac{1}{2\pi}\Re \oint_{\Gamma} 
\left[\coord_1^\top R_A(z) \bone\right] \left[ \bone^\top  R_B(z+ \iu \eta ) \coord_1 \right]  dz.\]
By similar arguments as in the off-diagonal entry $X_{12}$ that lead to \prettyref{eq:e_1R_A1}
and \prettyref{eq:e_2R_B1}, we obtain that  for all $z \in \Gamma$, 
\begin{align*}
\coord_1^\top R_A(z) \bone & = m_0(z) \left( 1- a_1^\top R_A^{(1)}(z) \bone_{n-1} \right)+ O\left(\delta_1 \delta_2 \sqrt{n} \right) \\
\coord_1^\top R_B(z+\iu\eta) \bone & = m_0(z+\iu\eta) 
\left( 1- b_1^\top R_B^{(1)}(z) \bone_{n-1} \right)+ O\left(\delta_1 \delta_2 \sqrt{n} \right).
\end{align*}
It follows that 
\begin{align*}
& \left[\coord_1^\top R_A(z) \bone\right] \left[ \bone^\top  R_B(z+ \iu \eta ) \coord_1 \right] \\
& =m_0(z)m_0(z+ \iu \eta ) 
\left( 1 - a_1^\top R_A^{(1)} \bone_{n-1}  - \bone_{n-1}^\top R_B^{(1)} b_1 
+a_1^\top R_A^{(1)} \bJ_{n-1} R_B^{(1)} b_1 \right) 
 +O\left(  \delta_1 \delta_2^2 n \right),
\end{align*}
where respectively, $a_1^\top $ and $b_1^\top $ are the first rows of $A$ and $B$ with first entries
removed; and $R_A^{(1)}$ and $R_B^{(1)}$ are the resolvents of the minors of $A$
and $B$ with first rows and columns removed. Thus, we get that 
\begin{align}
& \oint_{\Gamma} \left[\coord_1^\top R_A(z) \bone\right] \left[ \bone^\top  R_B(z+ \iu \eta ) \coord_1 \right] dz \nonumber \\
& =\oint_{\Gamma} m_0(z) m_0(z+ \iu \eta ) d z
- a_1^\top  g - b_1^\top  h + a_1^\top  M b_1 + O\left(  \delta_1 \delta_2^2 n \right),
\label{eq:contour-goal2}
\end{align}
where
\begin{align*}
g \triangleq & ~ \oint_{\Gamma} m_0(z)m_0(z+ \iu \eta )R_A^{(1)}(z) \bone   dz , \\
h \triangleq & ~ \oint_{\Gamma} m_0(z)m_0(z+ \iu \eta )R_B^{(1)}(z+ \iu \eta )
\bone  dz, \\
M \triangleq & ~ \oint_{\Gamma} m_0(z)m_0(z+ \iu \eta )R_A^{(1)}(z)  \allones R_B^{(1)}(z+ \iu \eta )dz.
\end{align*}

By the same argument as in the off-diagonal entry $X_{12}$, we can control each term above. The only difference is that for the bilinear form, instead of using \prettyref{eq:quad2}, applying Lemma~\ref{lmm:bilinear} 
to control $a_1^\top  M b_1$ gives an extra expectation term $(1-\sigma^2) n^{-1} \Tr M$. Therefore, we obtain that for any fixed constant $D>0$, with probability at least $1-n^{-D}$, for all
sufficiently large $n$, 
\begin{align}
 \left| X_{11} -\frac{1-\sigma^2}{2\pi} \Re  \frac{\Tr M}{n} \right|  \lesssim 
\left(\log n\right)^{\kappa} \frac{1}{\sqrt{\eta}}.
\label{eq:diag-dev}
\end{align}

Denote by $\calE_4$  the event where the following hold simultaneously for
all $z \in \Gamma$:
\begin{align*}
\|A - B\| & \lesssim \sigma \\
\left| \bone_{n-1}^\top R_A^{(1)} (z)  \bone_{n-1}  - m_0(z)n \right| 
& \lesssim \delta_3n \\
\left| \bone_{n-1}^\top R_B^{(1)} (z+\iu\eta) \bone_{n-1}  
- m_0(z+\iu\eta)n \right| 
& \lesssim \delta_3n.
\end{align*}
By the assumption \prettyref{eq:diffnorm} and
\prettyref{thm:locallawwigner}, we have that 
$\prob{\calE_4} \ge 1-n^{-D}$ for any constant $D>0$ and all $n \geq n_0(D)$.

We defer the analysis of $\Tr M$ to \prettyref{lmm:traceM} and
\prettyref{lmm:m_product} below:
Assuming $\calE_4$ holds and applying \prettyref{lmm:traceM} 
and \prettyref{lmm:m_product}
with $R_A, R_B$ replaced by $R_A^{(1)}, R_B^{(1)}$, respectively, we get
\begin{align}
\frac{1}{n} \Re \Tr(M)= \frac{ 2\pi+o_\eta(1)}{\eta}   
+ O\left(   \frac{\sigma}{\eta^2} + \frac{\delta_3}{ \eta} \right). \label{eq:trace_M_desired}
\end{align}
Setting $r(n)=o_\eta(1)+\delta_3$, we get
$$
\left| X_{11} - \frac{1-\sigma^2}{\eta}  \right| 
\lesssim \frac{r(n)}{\eta}+\frac{\sigma}{\eta^2}+
\left(\log n\right)^{\kappa} \frac{1}{\sqrt{\eta}} .
$$

\subsubsection{Analyzing the trace of $M$}

\begin{lemma}\label{lmm:traceM}
Suppose $\|A\| \le 2.5$ and $\|A-B\| \lesssim \sigma$ and 
\begin{align}
\left| \bone^\top R_A(z) \bone - m_0(z) n \right| \lesssim \delta_3 n, \nonumber \\
\left| \bone^\top R_B(z+\iu\eta) \bone - m_0(z+\iu\eta) n \right| \lesssim \delta_3 n, 
\label{eq:traceM_assump}
\end{align}
for all $z \in \Gamma$. Define 
$$
M = \oint_{\Gamma} m_0(z)m_0(z+ \iu \eta )R_A(z)  \bJ R_B (z+ \iu \eta )dz.
$$
Then 
$$
\frac{1}{n} \Tr M = \frac{1}{\iu \eta} \oint_{\Gamma}
m_0(z)m_0(z+\iu \eta)(m_0(z+\iu \eta)-m_0(z))dz + 
O\left(   \frac{\sigma}{\eta^2}+\frac{\delta_3}{\eta}  \right).
$$
\end{lemma}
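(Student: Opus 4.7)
The plan is to derive an exact pointwise identity for the integrand $\ones^\top R_A(z) R_B(z+\iu\eta)\ones$ and then separately bound a single residual cross term using the Ward identity. Since $\bJ=\ones\ones^\top$, trace cyclicity combined with symmetry of real resolvents gives $\Tr[R_A(z)\bJ R_B(z+\iu\eta)] = \ones^\top R_A(z) R_B(z+\iu\eta)\ones$, so $\frac{1}{n}\Tr M = \frac{1}{n}\oint_\Gamma m_0(z)m_0(z+\iu\eta)\,\ones^\top R_A(z) R_B(z+\iu\eta)\ones\,dz$. The algebraic heart of the argument is the identity
\[
R_A(z)\bigl[(B-A)-\iu\eta\bigr] R_B(z+\iu\eta) = R_A(z) - R_B(z+\iu\eta),
\]
which follows from $(B-z-\iu\eta)-(A-z)=(B-A)-\iu\eta$ together with $(A-z)R_A(z)=\bI=(B-z-\iu\eta)R_B(z+\iu\eta)$. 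Rearranging and sandwiching by $\ones$ gives
\[
\ones^\top R_A(z)R_B(z+\iu\eta)\ones = \frac{1}{\iu\eta}\Bigl[\ones^\top R_B(z+\iu\eta)\ones - \ones^\top R_A(z)\ones + E(z)\Bigr],
\]
where $E(z) \defn \ones^\top R_A(z)(B-A) R_B(z+\iu\eta)\ones$ is the sole cross term.

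Plugging the hypothesis \prettyref{eq:traceM_assump} into the first two terms gives $\ones^\top R_B(z+\iu\eta)\ones - \ones^\top R_A(z)\ones = (m_0(z+\iu\eta)-m_0(z))n + O(\delta_3 n)$. Multiplying by $m_0(z)m_0(z+\iu\eta)/(\iu\eta)$, integrating along $\Gamma$, and then dividing by $n$ produces exactly the claimed main term plus an error of $O(\delta_3/\eta)$; here I use that $|m_0(z)|,|m_0(z+\iu\eta)|\asymp 1$ on $\Gamma$ by \prettyref{prop:m0wigner} and that $\Gamma$ has total length $O(1)$.

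It remains to bound the contribution of $E(z)$. Cauchy-Schwarz together with $\|A-B\|\lesssim\sigma$ yields $|E(z)| \leq \sigma\,\|R_A(z)\ones\|\cdot\|R_B(z+\iu\eta)\ones\|$. I split $\Gamma$ into its two horizontal segments (where $|\Im z|=\eta/2$, total length $O(1)$) and its two short vertical segments (where $|\Re z|=3$, $|\Im z|\leq\eta/2$, total length $O(\eta)$). On the horizontal segments, the Ward identity $\|R(w)\ones\|^2=\Im(\ones^\top R(w)\ones)/\Im w$ applied to $R_A$ at $z$ and to $R_B$ at $z+\iu\eta$, combined with the hypothesis bound $|\ones^\top R(w)\ones|\lesssim n$ (and $|\Im m_0|\lesssim 1$ from \prettyref{prop:m0wigner}), gives $\|R_A(z)\ones\|,\|R_B(z+\iu\eta)\ones\|\lesssim\sqrt{n/\eta}$; hence $|E(z)|\lesssim \sigma n/\eta$, and after dividing by $\eta n$ and integrating, the horizontal contribution is $O(\sigma/\eta^2)$. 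On the vertical segments, $|\Re z|=3>2.5\geq\|A\|$ (and $\|B\|\leq\|A\|+\sigma\leq 3$ for small $\sigma$), so distance to spectrum bounds the operator norms by $O(1)$ and gives $\|R(w)\ones\|\lesssim\sqrt n$; thus $|E(z)|\lesssim\sigma n$ and, after integrating over length $O(\eta)$, contributes only $O(\sigma)$, which is absorbed into $O(\sigma/\eta^2)$. The main subtlety is attaining the sharp $\sigma/\eta^2$ error rather than the naive $\sigma/\eta^3$: this requires the Ward-identity improvement $\|R(w)\ones\|\lesssim\sqrt{n/\eta}$ over the crude operator bound $\sqrt n/|\Im w|$ on the horizontal part of $\Gamma$, and thus relies crucially on having the hypothesis available both for $R_A$ on $\Gamma$ and for $R_B$ on the shifted contour $\Gamma+\iu\eta$.
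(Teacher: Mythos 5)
Your proof is correct and takes essentially the same route as the paper's: the same resolvent identity $R_AR_B(z+\iu\eta)=\frac{1}{\iu\eta}\bigl[R_B(z+\iu\eta)-R_A+R_A(B-A)R_B(z+\iu\eta)\bigr]$, the hypothesis \prettyref{eq:traceM_assump} for the main term, and the Ward-identity bound $\|R(\cdot)\ones\|\lesssim\sqrt{n/\eta}$ on the horizontal segments for the cross term. The only cosmetic difference is on the vertical segments, where you bound $\|R_B(z+\iu\eta)\|$ via $\|B\|\le\|A\|+C\sigma\le 3$ (which silently needs $C\sigma<1/2$), whereas one can instead use the Ward identity for $R_B(z+\iu\eta)$ there as well (since $\Im(z+\iu\eta)\ge\eta/2$), or even the trivial bound $\|R_B(z+\iu\eta)\|\le 2/\eta$; any of these still yields the stated $O(\sigma/\eta^2)$ error.
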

\begin{proof}
Applying the identity
\[R_B(z+ \iu \eta )-R_A(z)=(B-(z+ \iu \eta ))^{-1}-(A-z)^{-1}=
R_B(z+ \iu \eta )(A-B+ \iu \eta)R_A(z),\]
we get $R_B(z+ \iu \eta ) R_A(z) = \frac{1}{\iu \eta}(R_B(z+ \iu \eta )-R_A(z) - R_B(z+ \iu \eta )(A-B) R_A(z))$.
Therefore
\begin{align}
\Tr M
&=\oint_{\Gamma} dz\;m_0(z)m_0(z+ \iu \eta ) \Tr \big[ R_A(z)  \bJ R_B(z+ \iu \eta ) \big] \nonumber\\
&=\oint_{\Gamma} dz\;m_0(z)m_0(z+ \iu \eta )   \bone^\top R_B(z+ \iu \eta )  
R_A(z)  \bone \nonumber\\
&=\frac{1}{ \iu \eta }\oint_{\Gamma} dz\;m_0(z)m_0(z+ \iu \eta )
 \bone^\top \left( R_B(z+ \iu \eta )-R_A(z)-R_B(z+ \iu \eta )(A-B)R_A(z) \right) \bone .\label{eq:TrMA-B}
\end{align}

To proceed, we use the following facts. First, it holds that
$$
\left| \bone^\top  R_B(z+ \iu \eta )(A-B)R_A(z) \bone \right|
\le \left\|  \bone^\top R_B(z+ \iu \eta) \right\| \| A- B\| \left\| R_A(z) \bone\right\|.
$$
For $z \in \Gamma$ with $\Im z = \pm \eta/2$, in view of the Ward identity given in Lemma~\ref{lem:ward} and the assumption given in
\prettyref{eq:traceM_assump}, we get that 
$$
\left\| R_A(z) \bone \right\|^2 = \bone^\top R_A(z) \overline{R_A(z)} \bone =
\frac{2}{\eta}  |\Im  \bone^\top R_A(z) \bone| 
\lesssim \frac{n}{\eta}
$$
For $z\in \Gamma$ with $\Re z= \pm 3$, we have that $\left\| R_A(z) \bone \right\|^2
\le n \left\|R_A(z)\right\|^2 \lesssim n$ thanks to the assumption $\|A\| \le 2.5$.  
Similarly, we have $\left\| R_B(z+\iu \eta) \bone \right\|^2 \lesssim n/\eta$.
Combining these bounds with the assumption that $\|A-B\|\lesssim \sigma$ yields that 
$$
\left| \bone^\top  R_B(z+ \iu \eta )(A-B)R_A(z) \bone \right|  \lesssim \frac{n \sigma}{\eta}.
$$
Then applying $|m_0(z)| \asymp 1$ and
\prettyref{eq:traceM_assump}, we obtain
$$
\frac{1}{n} \Tr M
=\frac{1}{\iu \eta} \oint_{\Gamma}
m_0(z)m_0(z+\iu \eta)(m_0(z+\iu \eta)-m_0(z))dz + 
O\left(\frac{\sigma}{\eta^2}+\frac{\delta_3}{\eta}\right).
$$
\end{proof}

\begin{lemma}\label{lmm:m_product}
Let $\Gamma$ be the rectangular contour with vertices $\pm 3 \pm \iu\eta/2$.
Then
$$
\Im \left[ \oint_{\Gamma}
m_0(z)m_0(z+\iu \eta)(m_0(z+\iu\eta)-m_0(z))dz\right]= 2\pi +o_\eta(1) .
$$
\end{lemma}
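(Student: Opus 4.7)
The plan is to deform the contour $\Gamma$ onto the branch cut of $m_0(z)$ along $[-2,2]$, where the integrand picks up a computable jump. Set $f(z) \defn m_0(z)\, m_0(z+\iu\eta)\bigl(m_0(z+\iu\eta) - m_0(z)\bigr)$. For every $z$ on or inside $\Gamma$ we have $\Im z \in [-\eta/2,\eta/2]$, hence $\Im(z+\iu\eta) \geq \eta/2 > 0$, so $m_0(\,\cdot\, + \iu\eta)$ is analytic throughout the interior of $\Gamma$. Consequently, the only non-analyticity of $f$ inside $\Gamma$ comes from $m_0(z)$ along $[-2,2]$. By Cauchy's theorem, for any $0 < \delta < \eta/2$ we may replace $\Gamma$ by the rectangle $\Gamma_\delta$ with vertices $\pm 2 \pm \iu\delta$; sending $\delta \to 0^+$, the two vertical sides of $\Gamma_\delta$ contribute $O(\delta) \to 0$ since $|m_0(z)| \asymp 1$ uniformly on and near $[-2,2]$ by \prettyref{prop:m0wigner}, leaving
\[\oint_\Gamma f(z)\, dz \;=\; \int_{-2}^{2}\bigl[f_-(x) - f_+(x)\bigr] dx, \qquad f_\pm(x) \defn \lim_{\varepsilon \to 0^+} f(x \pm \iu\varepsilon).\]

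Next I would compute the jump $f_-(x) - f_+(x)$ explicitly. Writing $w \defn m_0(x+\iu\eta)$, which is unaffected by the $\pm$ limit since $\Im(x+\iu\eta) > 0$, a short algebraic manipulation factors the jump as
\[f_-(x) - f_+(x) \;=\; w\,\bigl(m_0^-(x) - m_0^+(x)\bigr)\,\bigl[w - \bigl(m_0^+(x) + m_0^-(x)\bigr)\bigr].\]
By (\ref{eq:Imm0-rho}), $m_0^-(x) - m_0^+(x) = -2\iu\,\Im m_0^+(x) = -2\pi\iu\, \rho(x)$, and since $m_0^\pm(x)$ are the two roots of the quadratic $y^2 + xy + 1 = 0$ from \prettyref{prop:m0wigner}, their sum equals $-x$. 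Hence
\[f_-(x) - f_+(x) \;=\; -2\pi\iu\, \rho(x)\, w\,(w + x).\]

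Finally, I would pass to the limit $\eta \to 0^+$. By continuity, $w = m_0(x+\iu\eta) \to m_0^+(x)$ for each $x \in (-2,2)$, and the quadratic relation $m_0^+(x)^2 + x\,m_0^+(x) + 1 = 0$ gives $m_0^+(x)\bigl(m_0^+(x) + x\bigr) = -1$, i.e.\ $w(w + x) \to -1$ pointwise. Combined with the uniform bounds $|w| \asymp 1$ and $|w + x| \lesssim 1$ on $[-2,2]$ from \prettyref{prop:m0wigner} and the integrability of $\rho$, dominated convergence yields
\[\oint_\Gamma f(z)\, dz \;\longrightarrow\; \int_{-2}^{2} 2\pi\iu\, \rho(x)\, dx \;=\; 2\pi\iu,\]
whose imaginary part is $2\pi$, as desired. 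The only mild technical point is uniform control of the convergence near the edges $x = \pm 2$, where $m_0^+$ has square-root behavior; but since $\rho(x) = \tfrac{1}{2\pi}\sqrt{4-x^2}$ vanishes there and the remaining factors stay bounded by \prettyref{prop:m0wigner}, dominated convergence applies routinely and furnishes the stated $o_\eta(1)$ error.
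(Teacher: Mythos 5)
Your proof is correct and follows essentially the same route as the paper: deform the contour onto the branch cut $[-2,2]$, express the integral as a jump across the cut, and pass to the limit $\eta \to 0$ by bounded/dominated convergence using the properties of $m_0^\pm$ from Proposition~\ref{prop:m0wigner}. The only cosmetic differences are that you factor the jump and invoke the sum-of-roots identity $m_0^+(x)+m_0^-(x)=-x$ (rather than the product $m_0^+(x)m_0^-(x)=1$ as in the paper), and your intermediate contour touches the branch-cut endpoints $\pm 2$ whereas the paper uses vertices $\pm(2+\eps)\pm\iu\eps$ to avoid them, a harmless distinction since $m_0$ extends continuously there.
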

\begin{proof}
By Proposition \ref{prop:m0wigner}, the integrand is analytic and bounded over
    \[\{z \in \C:\;|z| \leq 9,\,z \notin [-2,2],\,z+\iu \eta \notin [-2,2]\}.\]
Hence we may deform $\Gamma$ to the contour $\Gamma_\epsilon$ with
vertices $\pm (2+\eps) \pm i\eps$, and take $\eps \to 0$ (for fixed $\eta$).
The portion of $\Gamma_\epsilon$ where $|\Re z|>2$ has total length $O(\eps)$, so the
integral over this portion vanishes as $\eps \to 0$.
We may apply the bounded convergence theorem for the remaining two
horizontal strips of $\Gamma_\epsilon$ to get (recall that contour integrals are evaluated counterclockwise):
\begin{align*}
&\oint_\Gamma m_0(z)m_0(z+i\eta)(m_0(z+i\eta)-m_0(z))dz\\
&=\int_2^{-2} m_0^+(x)m_0(x+i\eta)(m_0(x+i\eta)-m_0^+(x))dx
+\int_{-2}^2 m_0^-(x)m_0(x+i\eta)(m_0(x+i\eta)-m_0^-(x))dx,
\end{align*}
where $m_0^+$ and $m_0^-$ are the limits from $\C^+$ and $\C^-$ defined in
Proposition \ref{prop:m0wigner}. Now applying the bounded convergence theorem
again to take $\eta \to 0$, we have $\lim_{\eta \to 0}
m_0(x+i\eta)=m_0^+(x)$ and hence
\begin{align*}
&\lim_{\eta \to 0}
\oint_\Gamma m_0(z)m_0(z+i\eta)(m_0(z+i\eta)-m_0(z))dz\\
&=\int_{-2}^2 m_0^-(x)m_0^+(x)(m_0^+(x)-m_0^-(x))dx
=\int_{-2}^2 |m_0^+(x)|^2 \cdot 2\pi \iu\rho(x)dx=2\pi\iu,
\end{align*}
the last two steps applying (\ref{eq:Imm0-rho}). Thus the imaginary part of the
integral is $2\pi+o_\eta(1)$ for small $\eta$.
\end{proof}

\section{A tighter regularized QP relaxation}
	\label{sec:constrained}
	

As discussed in the introduction, GRAMPA can be interpreted as solving the regularized QP relaxation~\prettyref{eq:regQP} of the QAP~\prettyref{eq:qap-equiv}. 
We further explore this optimization aspect in this section, and study a tighter regularized QP relaxation. 

Let us begin by recalling the following QP relaxation of the QAP \prettyref{eq:qap-equiv} that replaces the feasible set of permutation matrices by its convex hull, the Birkhoff polytope consisting of all doubly stochastic matrices \cite{zaslavskiy2008path,aflalo2015convex}:
	\begin{align}
\min_{X \in \R^{n \times n}} & ~ 
\| A X - X B \|_F^2 \nonumber \\
\text{s.t.} & ~ X \ones = \ones, \, X^\top \ones =\ones, \, X\geq 0.  \label{eq:fullQP}
\end{align}
This program differs from the QP relaxation \prettyref{eq:regQP} that underlies GRAMPA in two aspects. 
First, the added ridge penalty $\eta^2 \|X\|_F^2$ in \prettyref{eq:regQP} is crucial for ensuring the desired statistical property of the solution,\footnote{See \cite[Section 1.3]{FMWX19a} for a more detailed discussion in this regard.} while for \prettyref{eq:fullQP} there is no such need for regularization. 
Moreover, the Birkhoff polytope constraint, being the tightest possible convex relaxation, is significantly tighter than the constraint $\bone^\top X \bone = n$. 
Although it is much slower to solve \prettyref{eq:fullQP} than to implement GRAMPA, the doubly stochastic relaxation achieves superior performance over the weaker program \prettyref{eq:regQP} as demonstrated by ample empirical evidence (cf.~\cite{DMWX18,FMWX19a}); nevertheless, a rigorous theoretical understanding is still lacking. 

As a further step toward understanding the relaxations, we analyze the following intermediate program between \prettyref{eq:fullQP} and \prettyref{eq:regQP}:
\begin{align}
\min_{X \in \R^{n \times n}} & ~ \|AX-XB\|_F^2 + \eta^2 \|X\|_F^2 \nonumber \\
\text{s.t.} & ~  X\ones=\ones , \label{eq:simpleQP}
\end{align}
where we enforce the sum of each row of $X$ to be equal to one.
The above program without the regularization term $\eta^2 \|X\|_F^2$ has been studied in \cite{aflalo2015convex} in a small noise regime. 
As we are analyzing the structure 
of the solution rather than the value of the program, the exact recovery guarantee for GRAMPA (and hence for \prettyref{eq:regQP}) does not automatically carries over to the tighter program \prettyref{eq:simpleQP}. 
Fortunately, we are able to employ similar technical tools to analyze the solution to~\prettyref{eq:simpleQP}, denoted henceforth by $\XQP$. 

The following result is the counterpart of \prettyref{thm:diag-dom} and \prettyref{cor:general}:
\begin{theorem}\label{thm:constrained}
Fix constants $a>0$ and $\kappa>2$, and let $\eta \in [1/(\log n)^a,1]$.
    Consider the correlated Wigner model 
    with $n \geq d \geq (\log
    n)^{c_0}$ where $c_0>\max(34+11a,8+12a)$. Then there exist
    $(\alpha,\kappa)$-dependent constants $C,n_0>0$ and a deterministic quantity
    $r(n) \equiv r(n,\eta,d,a)$ satisfying $r(n) \to 0$ as $n \to \infty$, such
    that for all $n \geq n_0$, with probability at least $1-n^{-10}$,
\begin{align}
 & \max_{\pi_*(k) \neq \ell } \left|n \cdot \XQP_{k\ell} \right|  \le  C
    (\log n)^{\kappa}\frac{1}{\sqrt{\eta}},   \label{eq:QPmain1} \\
    & \max_k \left| n \cdot \XQP_{k \pi_*(k) }  - \frac{4(1-\sigma^2)}{\pi\eta}\right| 
    \le C \pth{\frac{r(n)}{\eta}+\frac{\sigma}{\eta^2} + (\log
    n)^{\kappa}\frac{1}{\sqrt{\eta}}}.
 \label{eq:QPmain2}
\end{align}
If $\|a_{ij}\|_{\psi_2},\|b_{ij}\|_{\psi_2} \leq K/\sqrt{n}$, then the above guarantees
hold also for $\kappa=1$, with constants possibly depending on $K$. 

Furthermore, there exist constants $c, c'>0$ such that for all $n\ge n_0$, if 
\begin{align}
\left(\log n \right)^{-a} \le \eta \le c(\log n)^{-2\kappa}
\quad \text{ and } \quad 
\sigma \le c' \eta, \label{eq:assump_sigma_main-QP}
\end{align}
then with probability at least $1-n^{-10}$,
\begin{align}
\min_k X_{k \pi_*(k) }>\max_{\pi_*(k) \neq \ell} X_{k\ell}. \label{eq:diagdom-QP}
\end{align}
\end{theorem}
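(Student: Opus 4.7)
The plan is to mimic the proof of Theorem~\ref{thm:diag-dom} and Corollary~\ref{cor:general}, now applied to the solution $\XQP$ of the constrained QP~(\ref{eq:simpleQP}). Setting up the KKT conditions with Lagrange multiplier $\nu\in\R^n$ for the row-sum constraint $X\ones=\ones$ gives the stationarity equation $A^2\XQP-2A\XQP B+\XQP B^2+\eta^2\XQP=\nu\ones^\top$ together with $\XQP\ones=\ones$. Expanding in the joint eigenbases~(\ref{eq:eig-dec}) of $A$ and $B$ yields
\[
\XQP=\sum_{i,j}\frac{(v_i^\top\nu)(w_j^\top\ones)}{(\lambda_i-\mu_j)^2+\eta^2}\,v_iw_j^\top,
\]
which has the same form as the spectral representation~(\ref{eq:specrep}) with $\nu\ones^\top$ taking the role of $\eta\bJ$. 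Repeating the Ward-identity plus Cauchy-integral argument of Proposition~\ref{prop:res-rep} then gives the resolvent representation
\[
\XQP=\frac{1}{2\pi\eta}\Re\oint_{\Gamma}R_A(z)(\nu\ones^\top)R_B(z+\iu\eta)\,dz.
\]

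The next step is to replace $\nu$ by a deterministic proxy. Projecting $\XQP\ones=\ones$ onto $v_i$ yields $v_i^\top\nu=\eta(v_i^\top\ones)/\Im[\ones^\top R_B(\lambda_i+\iu\eta)\ones]$, and Theorem~\ref{thm:locallawwigner}(c) (which supplies $\ones^\top R_B(z)\ones\approx nm_0(z)$) combined with the lower bound on $\Im m_0(\lambda+\iu\eta)$ from Proposition~\ref{prop:m0wigner} shows that $\nu$ is well approximated by the deterministic vector
\[
\tilde\nu\defn\frac{\eta}{n}\psi(A)\ones,\qquad\psi(\lambda)\defn\frac{1}{\Im m_0(\lambda+\iu\eta)}.
\]
Substituting $\tilde\nu$ in place of $\nu$ produces the surrogate
\[
n\tXQP\defn\frac{1}{2\pi}\Re\oint_{\Gamma}R_A(z)\psi(A)\bJ R_B(z+\iu\eta)\,dz,
\]
and the error $n(\XQP-\tXQP)$ is absorbed into the $r(n)/\eta$ term of~(\ref{eq:QPmain2}) by propagating Theorem~\ref{thm:locallawwigner}(c) through the integral.

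I then run the Schur-complement and term-by-term analyses of Section~\ref{sec:pf-main} on $n\tXQP_{k\ell}$, substituting $\coord_k^\top R_A(z)\psi(A)\ones$ for $\coord_k^\top R_A(z)\ones$ throughout. Because $\psi(A)$ is an $A$-measurable weight that commutes with $R_A(z)$, Lemma~\ref{lmm:RSRapprox} applies with $\psi(A)\ones$ in place of $\ones$; the bilinear-form concentration of Lemma~\ref{lmm:bilinear}, the local-law resolvent bounds of Theorem~\ref{thm:locallawwigner}, and the $\ell_2$/$\ell_\infty$/Frobenius norm bounds of Lemma~\ref{lmm:M_norm} extend accordingly, with an additional multiplicative factor at most polynomial in $\eta^{-1}$ arising from $|\psi(\lambda)|\lesssim\eta^{-1}$ that is absorbed into the polylogarithmic error terms $(\log n)^\kappa/\sqrt\eta$. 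This yields the off-diagonal bound~(\ref{eq:QPmain1}). For the diagonal entry, the analog of $\frac{1}{n}\Re\Tr M$ in Lemma~\ref{lmm:traceM} now carries the extra weight $\psi$; by an analogous contour deformation to the spectrum as in Lemma~\ref{lmm:m_product}, using $\Im m_0^+(x)=\pi\rho(x)$ so that the product $\psi(x)\rho(x)=1/\pi$ is constant across the bulk, the residue calculation reduces to $\int_{-2}^2|m_0^+(x)|^2\cdot 2\iu\,dx=8\iu$. Combining this with the $(1-\sigma^2)/(2\pi)$ prefactor inherited from the bilinear-form expectation (Lemma~\ref{lmm:bilinear}) produces the leading constant $4(1-\sigma^2)/(\pi\eta)$ of~(\ref{eq:QPmain2}). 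The exact-recovery conclusion~(\ref{eq:diagdom-QP}) then follows from~(\ref{eq:QPmain1})--(\ref{eq:QPmain2}) by the thresholding argument of Corollary~\ref{cor:general}, applied to $n\XQP$ under~(\ref{eq:assump_sigma_main-QP}).

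The main obstacle I anticipate is controlling $\nu-\tilde\nu$ uniformly over the spectrum of $A$ at a rate compatible with the $r(n)/\eta$ error budget; this requires both Theorem~\ref{thm:locallawwigner}(c) and the uniform lower bounds on $\Im m_0$ from Proposition~\ref{prop:m0wigner}, together with careful tracking of the multiplicative error through the contour integral so that dividing by $\Im m_0$ does not blow up the rate beyond what is already budgeted. A secondary technical wrinkle is that $\psi(\lambda)=1/\Im m_0(\lambda+\iu\eta)$ is non-analytic in its argument, which is handled by treating $\psi(A)$ as a fixed $A$-measurable weight rather than as a function to be deformed via $z$-dependent contour manipulations.
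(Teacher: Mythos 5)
The student's plan correctly identifies the high-level skeleton (resolvent representation via the dual variable, approximation by a deterministic proxy, Schur complement plus term-by-term bounds, trace computation for the diagonal constant $4(1-\sigma^2)/(\pi\eta)$, thresholding for exact recovery). However, the choice to encode the row-sum constraint through a \emph{matrix} weight $\psi(A)$ commuting with $R_A(z)$, rather than the paper's \emph{scalar analytic} function of the contour variable, introduces genuine gaps.

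The paper observes (Lemma~\ref{prop:res-rep-c}) that the exact dual variable can be folded into a scalar function $F(z)=2\iu/\bigl(\ones^\top R_B(z+\iu\eta)\ones - \ones^\top R_B(z-\iu\eta)\ones\bigr)$, analytic and $A$-independent, so that $\XQP=\frac{1}{2\pi}\Re\oint_\Gamma F(z)R_A(z)\bJ R_B(z+\iu\eta)\,dz$. Replacing $F$ by the deterministic $\tilde F(z)$ of \eqref{eq:tFz}, the subsequent Schur-complement decomposition and concentration arguments apply verbatim because $\tilde F(z)$ commutes with everything and, crucially, is independent of the rows $a_1,a_2$ being peeled off. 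Your $\psi(A)\ones$ is not: the entries of $\psi(A)\ones$ depend on all of $A$ (including $a_1,a_2$), so the vectors $g,h$ and matrix $M$ appearing after the Schur expansion are no longer independent of $(a_1,b_2)$, and the conditioning step that feeds \eqref{eq:linearconcentration} and \eqref{eq:quad2} breaks down. The assertion that Lemma~\ref{lmm:RSRapprox} ``applies with $\psi(A)\ones$ in place of $\ones$'' is therefore not correct as stated.

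A second gap concerns the norm bounds. The crude estimate $|\psi(\lambda)|\lesssim\eta^{-1}$ would give $\|M\|_F^2\lesssim n^2/\eta^3$ and an off-diagonal bound of order $(\log n)^\kappa/\eta^{3/2}$, which cannot be beaten by the diagonal signal $\asymp(1-\sigma^2)/\eta$ for any admissible $\eta\leq 1$; this is not absorbable into polylog corrections. The paper (Lemma~\ref{lmm:M_norm-QP}) recovers the correct bound $\|M\|_F^2\lesssim n^2/\eta$ only by deforming the contours to $\gamma,\gamma'$ near the spectral support and using the \emph{refined} edge estimate $|f(z)|\lesssim 1/\sqrt{\eta+\zeta(z)}$ from \eqref{eq:flb3} and \eqref{eq:Imm0}, together with careful cancellation via \eqref{eq:m0diffbound}--\eqref{eq:m0diffbound2}. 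This manipulation requires the weight to be an analytic function of the contour variable $z$; it is precisely what you lose by treating $\psi(A)$ ``as a fixed $A$-measurable weight rather than as a function to be deformed via $z$-dependent contour manipulations.'' Likewise, the trace computation (Lemma~\ref{lmm:traceM-QP}) deforms $\Gamma$ to $\Gamma_\eps$ and takes the boundary values $m_0^\pm$, which needs $f$ analytic in $z$. The diagonal constant you reach is correct, but the route you describe would not rigorously produce it. To repair the argument along your lines you would effectively have to re-express $\psi(A)$ via an auxiliary contour integral, which brings you back to the paper's scalar-$F(z)$ representation.
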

Compared with \prettyref{cor:general}, the theoretical guarantee for the tighter
program \prettyref{eq:simpleQP} is similar to that for \prettyref{eq:regQP} 
and the GRAMPA method. In practice the performance of the former is slightly better (cf.~\prettyref{fig:exp}).
Furthermore, \prettyref{thm:constrained} applies verbatim to the solution of \prettyref{eq:simpleQP} with column-sum constraints $X^\top \ones=\ones$ instead. This simply follows by replacing $(A,B,X,\Pi_*)$ with $(B,A,X^\top,\Pi_*^\top)$.

\begin{figure}[!ht]
\centering
\includegraphics[clip, trim=4.0cm 8.5cm 4.0cm 9.0cm, width=0.5\textwidth]{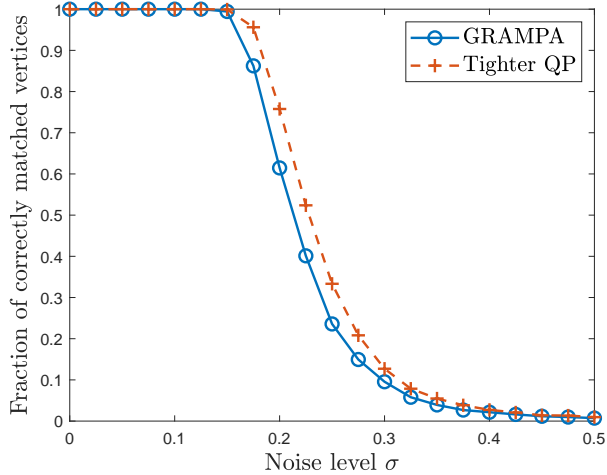}
\caption{Fraction of correctly matched pairs of vertices by GRAMPA and the tighter QP
\prettyref{eq:simpleQP} (both followed by linear assignment rounding) on \ER
graphs with 1000 vertices and edge density 0.5, averaged over 10 repetitions.}
\label{fig:exp}
\end{figure}

\subsection{Structure of solutions to QP relaxations}

Before proving Theorem~\ref{thm:constrained}, we first provide an overview of the structure of solutions to the QP relaxations \eqref{eq:regQP}, \eqref{eq:simpleQP} and \eqref{eq:fullQP}. 
Using the Karush–Kuhn–Tucker (KKT) conditions, the solution of \prettyref{eq:simpleQP} can be expressed as
\begin{align}
\XQP = \sum_{i, j} \frac{\Iprod{v_i}{\mu} \Iprod{w_j}{\ones} }{ (\lambda_i - \mu_j)^2 + \eta^2 } v_i  w_j^\top,
\label{eq:specrep-QP}
\end{align}
where  $\mu\in\reals^n$ is the dual variable corresponding to the row sum constraints, chosen so that $\XQP$ is feasible. 
Since
\[
\XQP\ones = \sum_{i, j} \frac{\Iprod{w_j}{\ones}^2}{ (\lambda_i - \mu_j)^2 + \eta^2 } v_i  v_i^\top  \mu = 
\sth{\sum_{i} \tau_i v_i  v_i^\top}  \mu,
\] 
where
\begin{equation}
\tau_i \triangleq \sum_j \frac{\Iprod{w_j}{\ones}^2}{ (\lambda_i - \mu_j)^2 + \eta^2 }.
\label{eq:rhoi}
\end{equation}
Solving $\XQP\ones=\ones$ yields
\begin{equation}
\mu= \sum_i \frac{\Iprod{v_i}{\ones}}{\tau_i} v_i , 
\label{eq:mu}
\end{equation}
so we obtain 
\begin{equation}
\XQP = \sum_{i, j} \frac{1}{ (\lambda_i - \mu_j)^2 + \eta^2 } \frac{1}{\tau_i} v_iv_i^\top \allones  w_jw_j^\top.
\label{eq:XQP}
\end{equation}

Let us provide some heuristics regarding the solution $\XQP$. As before we can express $\tau_i$ via resolvents as follows:
\begin{align}
\tau_i
= & ~ \frac{1}{\eta} \Im \sum_j \frac{\Iprod{w_j}{\ones}^2}{\mu_j - (\lambda_i+\iu \eta)} =\frac{1}{\eta} \ones^\top \bigg[ \Im \sum_j \frac{1}{\mu_j - (\lambda_i+\iu \eta)} w_jw_j^\top \bigg] \ones \nonumber \\
= & ~  \frac{1}{\eta} \Im[\ones^\top R_B(\lambda_i+\iu \eta)  \ones]. \label{eq:rhoi-res}
\end{align}
Invoking the resolvent bound \prettyref{eq:wignertotalsumlaw}, we expect $\tau_i
\approx \frac{n}{\eta} \Im[m_0(\lambda_i + \iu\eta)]$, where, by properties of the Stieltjes transform (cf.~\prettyref{prop:m0wigner}),  $\Im[m_0(\lambda_i + \iu\eta)] \approx \Im[m_0(\lambda_i)] = \pi \rho(\lambda_i)$ as $\eta \to0$.
Thus we have the approximation
\begin{equation*}
\XQP \approx \frac{1}{\pi n} \sum_{i, j} \frac{\eta}{ (\lambda_i - \mu_j)^2 + \eta^2 } \frac{1}{\rho(\lambda_i)} v_iv_i^\top \allones  v_jw_j^\top,
\end{equation*}
Compared with the unconstrained solution \prettyref{eq:specrep}, apart from normalization, the only difference is the extra spectral weight $\frac{1}{\rho(\lambda_i)}$ according to the inverse semicircle density. The effect is that eigenvalues near the edge are upweighted while eigenvalues in the bulk are downweighted, the rationale being that eigenvectors corresponding to the extreme eigenvalues are more robust to noise perturbation.

\begin{remark}[Structure of the QP solutions]
\label{rmk:structureQP}	
Let us point out that solution of various QP relaxations, including \prettyref{eq:fullQP}, \prettyref{eq:simpleQP}, and \prettyref{eq:regQP}, are of the following common form:
\begin{equation}
X = \sum_{i, j} \frac{ \eta }{ (\lambda_i - \mu_j)^2 + \eta^2 } v_i v_i^\top S  w_j w_j^\top,
\label{eq:commonstructure}
\end{equation}
where $S$ is an $n\times n$ matrix that can depend on $A$ and $B$. Specifically,
 from the loosest to the tightest relaxations, 
we have:
\begin{itemize}
	\item For \prettyref{eq:regQP} with the total sum constraint, $S=\alpha \allones$, where the dual variable $\alpha>0$ is chosen for feasility. Since scaling by $\alpha$ does not effect the subsequent rounding step, this is equivalent to \prettyref{eq:specrep} that we analyze.
	
	\item For \prettyref{eq:simpleQP} with the row sum constraint, $S=\mu\ones^\top$ is rank-one with $\mu$ given in \prettyref{eq:mu}.
	
	\item For \prettyref{eq:fullQP} without the positivity constraint, $S=\mu\ones^\top+\ones\nu^\top$ is rank-two. Unfortunately, the dual variables and the spectral structure of the optimal solution turn out to be difficult to analyze.
	
	\item For \prettyref{eq:fullQP} with the positivity constraint, $S=\mu\ones^\top+\ones\nu^\top+H$, where $H\geq 0$ is the dual variable certifying the positivity of the solution and satisfies complementary slackness.	
\end{itemize}

\end{remark}

\subsection{Proof of Theorem~\ref{thm:constrained}}

We now apply the resolvent technique to analyze the behavior of the constrained solution $\XQP$ and establish its diagonal dominance. 

\subsubsection{Resolvent representation of the solution}

We start by giving a resolvent representation of $\XQP$ via a contour integral. 

\begin{lemma} \label{prop:res-rep-c}
Consider symmetric matrices $A$ and $B$ with the spectral decompositions~\eqref{eq:eig-dec}, and suppose that $\|A\| \le 2.5$. Then the solution $\XQP$ of the program \eqref{eq:simpleQP} admits the following representation
\begin{align}
\XQP = 
\frac{1}{2\pi} \Re  \oint_\Gamma F(z) R_A(z) \allones  R_B(z+\iu \eta), \label{eq:XQP-contour}
\end{align}
where $\Gamma$ is defined by \eqref{eq:Gamma} and
\begin{equation}
F(z) \defn \frac{2\iu }{\ones^\top R_B(z+\iu \eta) \ones - \ones^\top R_B(z- \iu \eta) \ones}.
\label{eq:Fz}
\end{equation} 
\end{lemma}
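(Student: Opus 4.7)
The plan is to derive the contour representation by combining the KKT-based eigenbasis expansion \eqref{eq:XQP} with the same Ward-identity/Cauchy-integral argument used in the proof of \prettyref{prop:res-rep}, with the new twist that the spectral weights $1/\tau_i$ must be interpreted as the values of a single analytic function $F$ evaluated at $\lambda_i$.

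First, I would start from \eqref{eq:XQP} together with the resolvent identity \eqref{eq:rhoi-res} for $\tau_i$. Writing $\Im w = (w-\bar w)/(2\iu)$ and using $\overline{R_B(\lambda_i+\iu\eta)}=R_B(\lambda_i-\iu\eta)$, I obtain $1/\tau_i = \eta F(\lambda_i)$ for $F$ defined in \eqref{eq:Fz}. Next, by the Ward identity (\prettyref{lem:ward}),
\begin{equation*}
\sum_j \frac{w_j w_j^\top}{(\lambda_i-\mu_j)^2+\eta^2} \;=\; \frac{1}{\eta}\Im R_B(\lambda_i+\iu\eta),
\end{equation*}
so \eqref{eq:XQP} becomes
\begin{equation*}
\XQP \;=\; \sum_i F(\lambda_i)\, v_i v_i^\top \bJ\, \Im R_B(\lambda_i+\iu\eta).
\end{equation*}
A key observation is that $F(\lambda_i)$ is \emph{real}: its denominator equals $2\iu\,\Im[\ones^\top R_B(\lambda_i+\iu\eta)\ones]$, which is purely imaginary. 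Hence $F(\lambda_i)$ commutes trivially with $\Im$, and I can rewrite
\begin{equation*}
\XQP \;=\; \Im\sum_i v_i v_i^\top\, f(\lambda_i), \qquad f(z)\defn F(z)\,\bJ\, R_B(z+\iu\eta).
\end{equation*}

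Second, I would apply the matrix Cauchy integral formula exactly as in \eqref{eq:cauchy_integral_formula}--\eqref{eq:contour-rep}: provided every entry of $f(z)$ is analytic in a neighborhood of $\Gamma$ and its interior, Cauchy's formula gives $\sum_i v_iv_i^\top f(\lambda_i) = -\frac{1}{2\pi\iu}\oint_\Gamma R_A(z)f(z)\,dz$ (using that $\Gamma$ encloses every $\lambda_i$ since $\|A\|\le 2.5$). Taking imaginary parts and using $\Im(-\iu w)=\Re w$ yields the claimed formula \eqref{eq:XQP-contour}.

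The one step requiring care is analyticity of $f(z)$ inside $\Gamma$. The resolvents $R_B(z\pm\iu\eta)$ are individually analytic in the strips $\Im z>-\eta$ and $\Im z<\eta$, whose intersection contains $\Gamma$ (where $|\Im z|\le\eta/2$), so the denominator of $F(z)$ is analytic there; what must be ruled out is that this denominator vanishes. A direct computation with $c_j\defn \Iprod{w_j}{\ones}^2\ge 0$ gives, for $z=x+\iu y$,
\begin{equation*}
\frac{1}{2\iu\eta}\bigl[\ones^\top R_B(z+\iu\eta)\ones-\ones^\top R_B(z-\iu\eta)\ones\bigr]
\;=\; \sum_j \frac{c_j}{(\mu_j-x)^2+\eta^2-y^2-2\iu y(\mu_j-x)}.
\end{equation*}
For $|y|<\eta$ the real part of each denominator is $(\mu_j-x)^2+(\eta^2-y^2)>0$, and since $\ones\neq 0$ guarantees $\sum_j c_j>0$, the real part of the sum is strictly positive. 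Thus $F(z)$ is analytic and nonvanishing on/inside $\Gamma$, the hypothesis of Cauchy's formula is met, and the representation follows. This positivity/non-vanishing check is the only nontrivial step; everything else is a direct transcription of the Ward-identity plus contour argument already used for the unconstrained solution.
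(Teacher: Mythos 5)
Your proof is correct and follows essentially the same route as the paper's: it uses $\tau_i^{-1}=\eta F(\lambda_i)$ from \eqref{eq:rhoi-res}, the Ward identity, and the Cauchy integral formula over $\Gamma$, with the welcome extra details (which the paper only asserts or leaves implicit) that $F(\lambda_i)$ is real and that the denominator of $F$ is analytic and non-vanishing on and inside $\Gamma$, via the positivity of $\Re\sum_j c_j/((\mu_j-z)^2+\eta^2)$ for $|\Im z|<\eta$. The only nit is the final identity: since $-1/(2\pi\iu)=\iu/(2\pi)$, the relevant fact is $\Im(\iu w)=\Re w$ rather than $\Im(-\iu w)=\Re w$, a sign slip that does not affect the stated conclusion.
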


\begin{proof}
By \prettyref{eq:rhoi-res} we have $\tau_i^{-1} = \eta F(\lambda_i)$. This leads to the following contour representation of $\XQP$ analogous to \prettyref{eq:resrep} for the unconstrained solution:
\begin{align}
\XQP
= & ~  \eta \sum_{i} F(\lambda_i)v_iv_i^\top \allones \sth{ \sum_j \frac{1}{ (\lambda_i - \mu_j)^2 + \eta^2 }  w_jw_j^\top} \nonumber \\
\stepa{=} & ~ \Im \qth{ \sum_{i} F(\lambda_i)v_iv_i^\top \allones  R_B(\lambda_i+\iu \eta)} \nonumber \\
\stepb{=} & ~ \Im \qth{ \frac{1}{-2\pi \iu} \oint_\Gamma F(z) R_A(z) \allones  R_B(z+\iu \eta)} \nonumber \\
= & ~ \frac{1}{2\pi} \Re  \oint_\Gamma F(z) R_A(z) \allones  R_B(z+\iu \eta),  \notag
\end{align}
where (a) follows from the Ward identity (\prettyref{lem:ward}); (b) follows from Cauchy integral formula and the analyticity of 
$F$ in the region enclosed by the contour $\Gamma$. 
\end{proof}

\subsubsection{Entrywise approximation} 

For some small constant $\eps>0$, let $b,b'$ be as defined in Theorem
\ref{thm:locallawwigner}.
Under the assumptions of \prettyref{thm:constrained}, we have $c_0>b'$
for $\eps$ sufficiently small, so that Theorem \ref{thm:locallawwigner} applies.
Recall the notation $\delta_1,\ldots,\delta_4$ defined in \prettyref{eq:deltas}.
For sufficiently small $\eps>0$, we may also verify under the
assumptions of \prettyref{thm:constrained} that
$\delta_i=o(1)$ for each $i=1,2,3,4$, and
\begin{equation}
    \frac{\delta_1\delta_2^2n}{\eta} \le 1, \quad
    \frac{\delta_2^2 \delta_3n}{\eta^2} \le \frac{(\log n)^\kappa}{\sqrt{\eta}}, \quad \text{ and } \quad
\delta_3 \le \eta^3. \label{eq:QP-assump3}
\end{equation}
We also assume throughout the proof
that the high-probability event $\|A\| \leq 2.5$ holds.

Thanks to (\ref{eq:wignertotalsumlaw}), we can approximate $F(z)$ by
\begin{equation}
\tF(z) = \frac{1}{n} \frac{2\iu}{m_0(z+\iu \eta) -m_0(z-\iu \eta)}
\label{eq:tFz}
\end{equation}
and approximate $\XQP$ by
\begin{align}
\tXQP
= & ~ \frac{1}{2\pi} \Re  \oint_\Gamma \tF(z) R_A(z) \allones  R_B(z+\iu \eta) \label{eq:tXQP-contour} \\ 
= & ~ \frac{-1}{\pi n} \Im \oint_\Gamma  \frac{1}{m_0(z+\iu \eta)-m_0(z-\iu \eta)} R_A(z)\allones R_B(z+\iu \eta). \label{eq:tXQP}
\end{align}
The following lemma makes the approximation of $\XQP$ precise in the entrywise sense:
\begin{lemma}
\label{lmm:tXX}	
Suppose \prettyref{eq:QP-assump3} holds. 
On the high-probability event where \prettyref{thm:locallawwigner} holds and
also $\|A\| \leq 2.5$,
\begin{equation}
\|\tXQP-\XQP\|_{\ell_\infty} \lesssim \frac{\delta_2^2\delta_3}{\eta^2}  \le
    \frac{(\log n)^\kappa}{n\sqrt{\eta}},
\label{eq:tXX}
\end{equation}
where $\delta_2,\delta_3$ are defined in \prettyref{eq:deltas}.
\end{lemma}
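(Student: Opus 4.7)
The plan is to compare the two contour representations \prettyref{eq:XQP-contour} and \prettyref{eq:tXQP-contour} term by term. Subtracting and taking the $(k,\ell)$ entry yields
\begin{equation*}
(\XQP-\tXQP)_{k\ell} = \frac{1}{2\pi}\Re\oint_\Gamma \bigl(F(z)-\tF(z)\bigr)\bigl(\coord_k^\top R_A(z)\bone\bigr)\bigl(\bone^\top R_B(z+\iu\eta)\coord_\ell\bigr)\,dz,
\end{equation*}
so the task reduces to controlling $|F(z)-\tF(z)|$ uniformly on $\Gamma$ together with the two resolvent row sums. The row sum factors are handled directly by Theorem~\ref{thm:locallawwigner}(b): on the good event, each of $|\coord_k^\top R_A(z)\bone|$ and $|\bone^\top R_B(z+\iu\eta)\coord_\ell|$ is bounded by $C(\log n)^{1+\eps+a} \asymp \delta_2\sqrt{n}$ uniformly in $z\in\Gamma$.

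For the kernel difference, I would first establish the deterministic lower bound
\begin{equation*}
|m_0(z+\iu\eta)-m_0(z-\iu\eta)| \gtrsim \eta, \qquad z\in\Gamma.
\end{equation*}
On $\Gamma$ the shifted points $z\pm\iu\eta$ always lie in opposite open half planes with $|\Im(z\pm\iu\eta)|\in[\eta/2,3\eta/2]$; since $\Im m_0(w)$ has the same sign as $\Im w$ and satisfies $|\Im m_0(w)|\gtrsim |\Im w|$ by \prettyref{eq:Imm0}, subtracting gives the claimed lower bound on the full complex number. Next, \prettyref{eq:wignertotalsumlaw} applied to $R_B$ at $z\pm\iu\eta$ gives $|\bone^\top R_B(z\pm\iu\eta)\bone - n\,m_0(z\pm\iu\eta)|\lesssim \delta_3 n$, so the random denominator of $F(z)$ differs from that of $\tF(z)=2\iu/[n(m_0(z+\iu\eta)-m_0(z-\iu\eta))]$ by at most $O(\delta_3 n)$. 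Because $\delta_3 \le \eta^3 \ll \eta$ by \prettyref{eq:QP-assump3}, both denominators have magnitude $\asymp n\eta$ (which also certifies that $F(z)$ is well-defined on $\Gamma$ on the good event), and the elementary identity $|a^{-1}-b^{-1}|=|a-b|/(|a||b|)$ gives
\begin{equation*}
|F(z)-\tF(z)| \lesssim \frac{\delta_3 n}{(n\eta)^2} = \frac{\delta_3}{n\eta^2}.
\end{equation*}

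Since the contour $\Gamma$ has total length $O(1)$, combining the three bounds in the entrywise integral produces
\begin{equation*}
\|\XQP-\tXQP\|_{\ell_\infty} \lesssim \frac{\delta_3}{n\eta^2}\cdot \bigl(\delta_2\sqrt{n}\bigr)^2 = \frac{\delta_2^2\delta_3}{\eta^2},
\end{equation*}
which is the first inequality of the statement; the second inequality is then nothing but the middle condition of \prettyref{eq:QP-assump3}. I expect the main obstacle to be the \emph{uniform} lower bound on $|m_0(z+\iu\eta)-m_0(z-\iu\eta)|$ over the entire contour, since the two horizontal strips may sit arbitrarily close to the spectral edge where $m_0$ is nearly singular and a pointwise Taylor expansion at $z$ is not enough. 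The opposite-half-plane argument above is what makes this robust, leveraging the fact that $\Gamma$ was specifically designed with $|\Im z|\le\eta/2$ so that the two shifts $z\pm\iu\eta$ never collide with $\R$; the same reasoning simultaneously ensures that $F(z)$ itself is well-defined on $\Gamma$ on the good event, so that the initial termwise subtraction is legitimate.
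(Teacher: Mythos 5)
Your proposal is essentially identical to the paper's proof: both subtract the two contour representations, bound the row-sum factors by $\delta_2\sqrt{n}$ via \prettyref{eq:wignerrowsumlaw}, establish $|m_0(z+\iu\eta)-m_0(z-\iu\eta)|\gtrsim\eta$ from the opposite-sign observation and \prettyref{eq:Imm0}, use \prettyref{eq:wignertotalsumlaw} together with $\delta_3\ll\eta$ to control the denominator of $F$ and conclude $|F-\tF|\lesssim\delta_3/(n\eta^2)$, and integrate over the $O(1)$-length contour. The argument and all the key estimates match the paper's.
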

\begin{proof}
    For notational convenience, put $G(z) = 2\iu/(n F(z)) $ and
    $\tG(z) = 2\iu/(n \tF(z)) $. Note that $ | \Im(z)| \le \eta/2$ 
for $z\in \Gamma$, and thus
$ \Im(z+\iu \eta)$ and $ \Im(z-\iu \eta)$ have different signs. Therefore
\[
|\tG(z)| \geq |\Im\tG(z)| = | \Im m_0(z+\iu \eta)|+|\Im m_0(z-\iu \eta)| \gtrsim \eta,
\]
where the last step follows from \prettyref{eq:Imm0}. 
Furthermore, by \prettyref{eq:wignertotalsumlaw}, we have $\sup_{z\in\Gamma} |G(z)-\tG(z)| \leq 2C \delta_3$.
In view of \prettyref{eq:QP-assump3}, $\delta_3 \ll \eta$. Hence we have $ |G(z)|\gtrsim \eta$ and 
\[
\sup_{z\in\Gamma} |F(z)-\tF(z)| \lesssim \frac{1}{n} \frac{\delta_3}{\eta^2}.
\]
Finally, by \prettyref{eq:XQP-contour} and \prettyref{eq:tXQP-contour}, we have
\[
|(\XQP-\tXQP)_{k\ell}| \leq \oint_\Gamma dz |F(z)-\tF(z)| |e_k^\top R_A(z)\ones| |e_\ell^\top R_B(z+\iu \eta)\ones| .
\]
By \prettyref{eq:wignerrowsumlaw}, for all $k,\ell$, $|e_k^\top R_A(z)\ones|
    \lesssim \delta_2\sqrt{n}$  and $|e_\ell^\top R_B(z+\iu \eta)\ones| \lesssim
    \delta_2\sqrt{n}$. Combining the last two displays yields the desired claim.
\end{proof}

In view of the entrywise approximation, we may switch our attention to the approximate solution $\tXQP$ and establish
its diagonal dominance, assuming without loss of generality $\pi_*$ is the
identity permutation.
The proof parallels the analysis in \prettyref{sec:pf-main} so we focus on the differences. 
To make the scaling identical to the unconstrained case, define 
\begin{equation}
Y \triangleq n \tXQP =  \frac{1}{2\pi} \Re  \oint_\Gamma f(z) R_A(z) \allones  R_B(z+\iu \eta), 
 \label{eq:Y} 
\end{equation}
with
\[f(z) \triangleq \frac{2 \iu}{m_0(z+\iu \eta)-m_0(z-\iu \eta)}.\]
Compared with the unconstrained solution \prettyref{eq:resrep}, the only difference is the weighting factor $f(z)$.

We aim to show that with probability at least $1-n^{-D}$, for any constant $D>0$, the following holds:
\begin{enumerate}
	\item For off-diagonals, we have
	\begin{equation}
	\max_{k\neq \ell} |Y_{k\ell}| \lesssim \left(\log n\right)^{\kappa}/\sqrt{\eta}.
	\label{eq:Yoff}
	\end{equation}
	\item For diagonal entries, we have 
	\begin{equation}
            \min_k\left| Y_{kk} -\frac{4(1-\sigma^2)}{\pi\eta}\right|  \lesssim
            \frac{r(n)}{\eta}+\frac{\sigma}{\eta^2}
            +\left(\log n\right)^{\kappa} \frac{1}{\sqrt{\eta}} .
	\label{eq:Ydiag}
	\end{equation}
\end{enumerate}
In view of \prettyref{lmm:tXX}, this implies the desired \prettyref{eq:QPmain1} and \prettyref{eq:QPmain2}. 
Finally, analogous to \prettyref{cor:general},
under the assumption \prettyref{eq:assump_sigma_main-QP} with constants $c=1/(64C^2)$
and $c'=1/(2C)$, for all sufficiently large $n$,
$$
\frac{4(1-\sigma^2)}{\pi\eta} \ge \frac{7}{8\eta} > C \left(\frac{r(n)}{\eta}+
\frac{\sigma}{\eta^2} + 2(\log n)^{\kappa} \frac{1}{\sqrt{\eta}}\right),
$$ 
implying the diagonal dominance in \prettyref{eq:diagdom-QP}.

\subsubsection{Off-diagonal entries}
Let us first consider $Y_{12}$.
Recall that for $z \in \Gamma$, we have $|\Im(z+\iu \eta)| \gtrsim \eta$,
$|\Im (z-\iu \eta)| \gtrsim \eta$, and these imaginary parts
have opposite signs. Then
    \begin{equation}\label{eq:flb3}
        |f(z)| \leq \frac{2}{|\Im[m_0(z+\iu \eta)-m_0(z-\iu \eta)]|}
    =\frac{2}{|\Im m_0(z+\iu \eta)|+|\Im m_0(z-\iu \eta)|}
    \lesssim \frac{1}{\eta},
    \end{equation}
    where the last step applies (\ref{eq:Imm0}).
Analogous to \prettyref{eq:contour-goal1-wigner}, we get
\begin{align}
    2\pi Y_{12} &=\Re\left(\oint_{\Gamma} f(z) \left[\coord_1^\top R_A(z) \bone \right] 
    \left[\coord_2^\top R_B(z+ \iu \eta) \bone \right] dz\right)\nonumber \\
    & = \Re\left(\alpha -  a_1^\top  g -  b_2^\top  h + a_1^\top  M b_2\right)+ 
 O\left(  \frac{\delta_1  \delta^2_2  n}{\eta}    \right),  \label{eq:contour-goal1-wigner-QP}
\end{align}
where
\begin{align}
\alpha \triangleq & ~ \oint_{\Gamma} f(z) m_0(z)m_0(z+ \iu \eta )    dz , \label{eq:alpha-QP}\\
g \triangleq & ~ \oint_{\Gamma} f(z) m_0(z)m_0(z+ \iu \eta )R_A^{(12)}(z) \bone_{n-2}    dz , \label{eq:g-QP}\\
h \triangleq & ~  \oint_{\Gamma}f(z)  m_0(z)m_0(z+ \iu \eta )R_B^{(12)}(z+ \iu \eta ) \bone_{n-2}   dz , \label{eq:h-QP}\\
M \triangleq & ~ \oint_{\Gamma} f(z) m_0(z)m_0(z+ \iu \eta )R_A^{(12)}(z) \bJ_{n-2}  R_B^{(12)}(z+ \iu \eta )dz. \label{eq:M-QP}
\end{align}

Here the constant $\Re \alpha$
is in fact equal to $2\pi$, which is consistent with the row-sum constraints.
Indeed, opening up $m_0(z)$ and applying the Cauchy integral formula, we have
\begin{align}
\Re \alpha
= & ~  \Re \oint dz \frac{2 \iu}{m_0(z+\iu \eta)-m_0(z-\iu \eta)} m_0(z) m_0(z+\iu \eta) \nonumber\\
= & ~  \int \rho(x) dx \Re \oint dz \frac{1}{x-z} \frac{2 \iu~m_0(z+\iu \eta) }{m_0(z+\iu \eta)-m_0(z-\iu \eta)} \nonumber\\
= & ~ \int \rho(x) dx \Re
    \left[(-2\pi\iu) \frac{2 \iu~m_0(x+\iu \eta) }{m_0(x+\iu \eta)-m_0(x-\iu
    \eta)}\right]\nonumber\\
= & ~  2\pi \int \rho(x) dx \Re\qth{\frac{2\,m_0(x+\iu \eta) }{2\iu \Im m_0(x+\iu \eta)}} = 2\pi \int \rho(x) dx = 2\pi. \label{eq:alpha1}
\end{align}

As in \prettyref{sec:offdiag-term}, to bound the linear and bilinear terms, we need to bound the $\ell_\infty$-norms and $\ell_2$-norms of $g,h$ and $M$.
Clearly, by \prettyref{eq:flb3}, the $\ell_\infty$-norms are at most an $O(1/\eta)$ factor of those obtained in \prettyref{eq:ginfty} and \prettyref{eq:Minfty}, i.e., 
$\|g\|_\infty \lesssim \delta_2 \sqrt{n}/\eta$  and $\|M\|_\infty \lesssim
\delta_2^2 n/\eta$.
The $\ell_2$-norms need to be bounded more carefully. The following result is the counterpart of \prettyref{lmm:M_norm}:
\begin{lemma}\label{lmm:M_norm-QP}
Assume the same setting of \prettyref{lmm:M_norm}, and define
    $M$, $g$, and $h$ as in (\ref{eq:g-QP}--\ref{eq:M-QP}) with
    $R_A$, $R_B$ in place of $R_A^{(12)},R_B^{(12)}$. Then 
$\|M\|_F^2 \lesssim n^2/\eta$,
    $\|g\|^2 \lesssim n \log(1/\eta)$, and
    $\|h\|^2 \lesssim n \log(1/\eta)$.
\end{lemma}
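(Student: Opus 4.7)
The plan is to adapt the proof of \prettyref{lmm:M_norm} to incorporate the extra weight $f(z)$, carrying it through the same conjugate-and-deform argument while showing that the additional factor does not degrade the final scalings.

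First, I would verify analyticity of the integrand in the annular region between $\Gamma$ and $\Gamma'$: the resolvent factors $R_A(z)$ and $R_B(z+\iu\eta)$ are analytic since $\|A\|\leq 2.5$; the Stieltjes factors $m_0(z),m_0(z\pm\iu\eta)$ are analytic since neither $z$ nor $z\pm\iu\eta$ hits $[-2,2]$ when $|\Im z|\leq\eta/2$; and $f(z)=2\iu/(m_0(z+\iu\eta)-m_0(z-\iu\eta))$ has branch cuts only at $\{z:z\pm\iu\eta\in[-2,2]\}$, which lie outside the annulus. Deforming one copy of each defining contour from $\Gamma$ to $\Gamma'$ and using conjugation symmetry $\overline{f(z)}=f(\bar z)$, $\overline{m_0(z)}=m_0(\bar z)$, $\overline{R(z)}=R(\bar z)$, together with the resolvent identities
\[
R_A(z)R_A(w)=\frac{R_A(z)-R_A(w)}{z-w},\qquad R_B(z+\iu\eta)R_B(w-\iu\eta)=\frac{R_B(z+\iu\eta)-R_B(w-\iu\eta)}{z-w+2\iu\eta},
\]
and the hypothesis $|\bone^\top R(z)\bone|\lesssim n$, I obtain the analogs of (\ref{eq:gnormdeform}):
\[
\|g\|^2\lesssim n\oint_\Gamma dz\oint_{\Gamma'}dw\,\frac{|f(z)f(\bar w)|}{|z-w|},\qquad \|M\|_F^2\lesssim n^2\oint_\Gamma dz\oint_{\Gamma'}dw\,\frac{|f(z)f(\bar w)|}{|z-w|\cdot|z-w+2\iu\eta|},
\]
with the bound for $\|h\|^2$ identical except that $|z-w|$ is replaced by $|z-w+2\iu\eta|$.

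Second, I would bound $|f(z)|$ pointwise on the contour using \prettyref{prop:m0wigner}. Writing $\zeta(z)=\min(|\Re z-2|,|\Re z+2|)$, the bound $|f(z)|\leq 2/(|\Im m_0(z+\iu\eta)|+|\Im m_0(z-\iu\eta)|)$ combined with \prettyref{eq:Imm0} gives
\[
|f(z)|\lesssim\begin{cases}1/\sqrt{\zeta(z)+\eta}&\text{on horizontal strips with }|\Re z|\leq 2,\\ \sqrt{\zeta(z)+\eta}/\eta&\text{on horizontal strips with }|\Re z|>2,\\ 1/\eta&\text{on vertical strips.}\end{cases}
\]
Third, I would bound each double contour integral region by region, parametrizing $z=x+\iu\eta/2$ and $w=y+\iu\eta/4$ on horizontal strips so that the kernel satisfies $1/(|z-w|\cdot|z-w+2\iu\eta|)\asymp 1/\bigl((x-y)^2+\eta^2\bigr)$. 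The dominant bulk-bulk contribution to $\|M\|_F^2$ reduces, via the change of variables $u=2-|x|,v=2-|y|$, to
\[
\int_0^{2}\int_{0}^{2}\frac{du\,dv}{\sqrt{(u+\eta)(v+\eta)}\,\bigl((u-v)^2+\eta^2\bigr)}\lesssim\frac{1}{\eta},
\]
which I verify by splitting into $u,v\lesssim\eta$ versus $u,v\gtrsim\eta$. Edge horizontal contributions are handled by the alternate bound on $|f|$ with the same change of variables; vertical-strip contributions are negligible because these strips have length $O(\eta)$, so the factor $1/\eta$ in $|f|$ is compensated by the short length of integration. Combined with the prefactor $n^2$, this gives $\|M\|_F^2\lesssim n^2/\eta$ (up to logarithmic factors which are absorbed into the overall $(\log n)^\kappa$ bound in \prettyref{eq:basicdecomp-offdiag}). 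The corresponding computation for $\|g\|^2,\|h\|^2$—with only one factor of $1/|z-w|$ in the kernel—yields the $n\log(1/\eta)$ scaling.

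The hard part will be the region-by-region bookkeeping of the bulk-bulk singular integral above. Although $|f(z)|$ is pointwise as large as $1/\eta$ on the vertical strips and near $|\Re z|=3$, its bulk scaling $1/\sqrt{\zeta+\eta}$ precisely matches the square-root edge behavior of the semicircle density $\rho$. This matching is the very same mechanism that produces the identity $\Re\alpha=2\pi$ in \prettyref{eq:alpha1}, and is what allows the extra weight $f(z)$ to be absorbed into the bounds of \prettyref{lmm:M_norm} without a multiplicative deterioration in $\eta$.
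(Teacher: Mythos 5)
Your proposal captures the right heuristic (the square-root cancellation of $|f(z)|\lesssim 1/\sqrt{\zeta(z)+\eta}$ against the kernel), but as written it does not close. After you substitute the crude bound $|\bone^\top R\bone|\lesssim n$, you are left with
\[
\|M\|_F^2 \lesssim n^2 \oint_\Gamma dz \oint_{\Gamma'} dw \, \frac{|f(z)|\,|f(\bar w)|}{|z-w|\,|z-w+2\iu\eta|},
\]
and you need this double integral to be $O(1/\eta)$. But $\Gamma$ and $\Gamma'$ have long horizontal strips at $2<|\Re z|\leq 3$ where $\zeta(z)\asymp 1$; there your own bound gives $|f(z)|\lesssim\sqrt{\zeta(z)+\eta}/\eta\asymp 1/\eta$, not $1/\sqrt{\zeta(z)+\eta}$. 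Taking $z,w$ both in these regions, $|f(z)||f(\bar w)|\asymp 1/\eta^2$ over a length-$O(1)$ set, while $\int\int \frac{dx\,dy}{(x-y)^2+\eta^2}\asymp 1/\eta$, so this piece alone contributes $O(1/\eta^3)$ to the double integral --- two powers of $\eta$ worse than needed. The ``vertical strips are short'' and ``alternate bound on $|f|$'' remarks do not repair this; it is the edge-horizontal region, not the vertical strips, that kills the estimate.

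The paper's proof evades this by a two-step argument that is genuinely necessary and absent from your sketch. First, it does \emph{not} use the crude bound $|\bone^\top R\bone|\lesssim n$ on the resolvent sums; it uses the sharper estimate (\ref{eq:wignertotalsumlaw}) to write $n^{-1}\bone^\top(R_A(z)-R_A(w))\bone=m_0(z)-m_0(w)+O(\delta_3)$, splitting the integral into a leading term $\termI$ involving only $m_0$ and $f$, plus a residual $\termII$ bounded by $\delta_3/\eta^4\lesssim 1/\eta$ via the assumption $\delta_3\leq\eta^3$ in (\ref{eq:QP-assump3}). Second, and crucially, because $\termI$ contains no resolvents --- only $m_0$ and $f$, which are analytic off $[-2,2]$ --- both contours can now be deformed inward to the auxiliary contours $\gamma,\gamma'$ with vertices $\pm(2+2\eta)\pm\iu\eta/2$ and $\pm(2+\eta)\pm\iu\eta/4$. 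On those narrow contours $\zeta(z)=O(\eta)$ everywhere, so $|f(z)|\lesssim 1/\sqrt{\eta+\zeta(z)}$ holds uniformly and the edge-horizontal pathology disappears. This deformation is impossible with the resolvents still inside the integrand (their poles sit inside $(-2.5,2.5)$), which is exactly why the $\termI/\termII$ split has to come first. Your proof would need to add both of these steps --- the $m_0$-approximation split and the inward contour deformation --- to be correct.
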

\begin{proof}
We start with $\|M\|_F$, as the arguments for $\|g\|$ and $\|h\|$ are
analogous and simpler.
Recall the contour $\Gamma'$ from \prettyref{fig:contour}.
Proceeding as in the proof of \prettyref{lmm:M_norm}, we have
\begin{align*}
& ~ \frac{1}{n^2} \fnorm{M}^2 \\
= & ~ - \oint_{\Gamma} dz \oint_{\Gamma'} dw \;
    m_0(z) m_0(z+\iu \eta ) m_0(w) m_0(w-\iu \eta ) f(z) f(w) \times\\
    &\hspace{1in}\frac{n^{-1}\ones^\top(R_A(z)-R_A(w))\ones}{z-w}
    \frac{n^{-1}\ones^\top(R_B (z+ \iu \eta )- R_B (w- \iu \eta ))  \ones}{z+ \iu \eta - (w- \iu \eta)}	\\
=
& ~ - \underbrace{\oint_{\Gamma} dz \oint_{\Gamma'} dw\,m_0(z) m_0(z+\iu \eta ) m_0(w) m_0(w-\iu \eta ) f(z)f(w)
\frac{m_0(z)-m_0(w)}{z-w}
\frac{m_0(z+ \iu \eta )- m_0(w- \iu \eta )  }{z+ \iu \eta - (w- \iu
    \eta)}}_{\termI}	\\
    & ~ + \termII ,
\end{align*}
where $\termII$ denotes the remainder term. 
Applying \prettyref{eq:wignertotalsumlaw}, \prettyref{eq:flb3}, and the
    boundedness of $m_0$, the residual term is bounded as
\begin{equation}
|\termII| \lesssim \delta_3 \oint_{\Gamma} dz \oint_{\Gamma'} dw  |f(z)| |f(w)|  
\frac{1}{|z-w|}
\frac{1}{|z+ \iu \eta - (w- \iu \eta)|} \lesssim \frac{\delta_3}{\eta^4}
    \lesssim \frac{1}{\eta}.\label{eq:termII}
\end{equation}

To control the leading term $\termI$, let us define the auxiliary contours
    $\gamma$ with vertices $\pm (2+2\eta) \pm (\eta/2)\iu$ and
    $\gamma'$ with vertices $\pm (2+\eta) \pm (\eta/4)\iu$.
By first deforming $\Gamma'$ to $\gamma'$ for each fixed $z \in \Gamma$, 
then deforming $\Gamma$ to $\gamma$, 
and finally taking the complex modulus and
applying $|m_0| \lesssim 1$, we get
\begin{align*}
|\termI|
\lesssim & ~\oint_{\gamma} dz \oint_{\gamma'} dw\, |f(z)||f(w)|
\left|\frac{m_0(z)-m_0(w)}{z-w}\right|
\left|\frac{m_0(z+ \iu \eta )- m_0(w- \iu \eta )  }
    {z+ \iu \eta - (w- \iu \eta)}\right|.
\end{align*}
The reason for performing these deformations is that
for any $z \in \gamma \cup \gamma'$, since
$\Re z \in [-2-2\eta,2+2\eta]$, we have from (\ref{eq:Imm0})
that $\Im m_0(z+\iu \eta) \asymp \sqrt{\eta+\zeta(z)}$
and $-\Im m_0(z-\iu \eta) \asymp \sqrt{\eta+\zeta(z)}$, where $\zeta(z)$ is as defined in Proposition \ref{prop:m0wigner}. 
Then we obtain
from (\ref{eq:flb3}) the improved bound
$|f(z)| \lesssim 1/\sqrt{\eta+\zeta(z)}$, and hence
\[|\termI| \lesssim
\oint_{\gamma} dz \oint_{\gamma'} dw\, \frac{1}{\sqrt{\eta+\zeta(z)}}
\frac{1}{\sqrt{\eta+\zeta(w)}}\left|\frac{m_0(z)-m_0(w)}{z-w}\right|
\left|\frac{m_0(z+ \iu \eta )- m_0(w- \iu \eta )  }
{z+ \iu \eta - (w- \iu \eta)}\right|.\]

To bound the above integral,
for a small constant $c_0>0$, consider the two cases where $|z-w| \geq
c_0$ and $|z-w|<c_0$. For the first case $|z-w| \geq c_0$,
we simply apply $|m_0| \lesssim 1$ and
$\sqrt{\eta+\kappa} \geq \sqrt{\eta}$ to get that
    \begin{equation}\label{eq:caseI}
        \oint \oint_{|z-w| \geq c_0} dz\,dw\, \frac{1}{\sqrt{\eta+\zeta(z)}}
\frac{1}{\sqrt{\eta+\zeta(w)}}\left|\frac{m_0(z)-m_0(w)}{z-w}\right|
\left|\frac{m_0(z+ \iu \eta )- m_0(w- \iu \eta )  }
{z+ \iu \eta - (w- \iu \eta)}\right| \lesssim \frac{1}{\eta}.
    \end{equation}

In the second case $|z-w|<c_0$, we claim that for $c_0$ sufficiently small, we
have
 \begin{align}
     |m_0(z)-m_0(w)| &\lesssim \sqrt{\eta+\zeta(z)}+\sqrt{\eta+\zeta(w)},
\label{eq:m0diffbound}\\
|m_0(z+\iu \eta)-m_0(w-\iu \eta)|
     &\lesssim \sqrt{\eta+\zeta(z)}+\sqrt{\eta+\zeta(w)}.
\label{eq:m0diffbound2}
\end{align}
Indeed, if $\zeta(z)>c_0$, then (\ref{eq:m0diffbound})
and (\ref{eq:m0diffbound2}) hold because
$\sqrt{\eta+\zeta(z)}+\sqrt{\eta+\zeta(w)} \asymp 1$. If
instead $\zeta(z) \leq c_0$, say, $\Re z \geq 2-c_0$, then from the explicit form
(\ref{eq:stieltjes}) for $m_0(z)$ we get $1+m_0(z) = \frac{2-z+\sqrt{z^2-4}}{2}$ and hence
\[
|1+m_0(z)| \lesssim |z-2| + \sqrt{|z-2||z+2|} \asymp \sqrt{|z-2|} \asymp \sqrt{\eta + \zeta(z)}.
\]
Furthermore, since $\Re w \geq \Re z - |z-w| \geq 2-2c_0$, we also have
$|1+m_0(w)| \lesssim \sqrt{\eta + \zeta(w)}$.
Then (\ref{eq:m0diffbound}) follows from the triangle inequality. The case of
$\Re z \leq -2+c_0$, and the argument for (\ref{eq:m0diffbound2}), are
analogous.

Having established (\ref{eq:m0diffbound}) and (\ref{eq:m0diffbound2}),
we apply
\begin{align*}
    \frac{\left(\sqrt{\eta+\zeta(z)}+\sqrt{\eta+\zeta(w)}\right)^2}
    {\sqrt{\eta+\zeta(z)}\sqrt{\eta+\zeta(w)}}
    &\lesssim \frac{\sqrt{\eta+\max(\zeta(z),\zeta(w))}}
    {\sqrt{\eta+\min(\zeta(z),\zeta(w))}}\\
    &\leq \frac{\sqrt{\eta+\min(\zeta(z),\zeta(w))}
    +\sqrt{|\zeta(z)-\zeta(w)|}}
    {\sqrt{\eta+\min(\zeta(z),\zeta(w))}}
    \leq 1+\frac{\sqrt{|z-w|}}{\sqrt{\eta}}
\end{align*}
to get
\begin{align*}
    &\oint \oint_{|z-w|<c_0} dz\,dw\, \frac{1}{\sqrt{\eta+\zeta(z)}}
\frac{1}{\sqrt{\eta+\zeta(w)}}\left|\frac{m_0(z)-m_0(w)}{z-w}\right|
\left|\frac{m_0(z+ \iu \eta )- m_0(w- \iu \eta )  }
{z+ \iu \eta - (w- \iu \eta)}\right| \nonumber \\
    &\lesssim \oint \oint_{|z-w|<c_0} dz\,dw\, 
    \left(1+\frac{\sqrt{|z-w|}}{\sqrt{\eta}}\right)
    \frac{1}{|z-w||z+\iu \eta-(w-\iu \eta)|}.
\end{align*}
Then divide this into the integrals where $|z-w|<\eta$ and $|z-w| \geq \eta$,
applying
\[\oint \oint_{|z-w|<\eta} dz\,dw\, 
    \frac{1}{|z-w||z+\iu \eta-(w-\iu \eta)|}
    \lesssim \oint \oint_{|z-w|<\eta} dz\,dw\, \frac{1}{\eta^2}
    \lesssim \frac{1}{\eta}\]
    and
\begin{align}
    &\oint \oint_{\eta \leq |z-w|<c_0} dz\,dw\, 
    \frac{\sqrt{|z-w|}}{\sqrt{\eta}}
    \cdot \frac{1}{|z-w||z+\iu \eta-(w-\iu \eta)|}\nonumber\\
    &\lesssim \frac{1}{\sqrt{\eta}}
    \oint \oint_{\eta \leq |z-w|<c_0} dz\,dw\, \frac{1}{|z-w|^{3/2}}
    \lesssim \frac{1}{\sqrt{\eta}}
    \frac{1}{\sqrt{\eta}} \lesssim \frac{1}{\eta}.
    \label{eq:caseIIbound}
\end{align}
Combining with the first case (\ref{eq:caseI}), we get
$|\termI| \lesssim 1/\eta$. Finally, combining with (\ref{eq:termII}), we get
$\|M\|_F^2 \lesssim n^2/\eta$ as desired.

Next we bound $\|g\|$. Proceeding as in the proof of \prettyref{lmm:M_norm} and
following the same argument as above, we get
\begin{align*}
    \frac{1}{n}\|g\|^2 &\lesssim  \oint_{\gamma} dz \oint_{\gamma'} dw  \,
|f(z)||f(w)|\frac{|m_0(z)-m_0(w)|}{|z-w|} + O\pth{\frac{\delta_3}{\eta^3}}\\
    &\lesssim  \oint_{\gamma} dz \oint_{\gamma'} dw  \,
    \frac{1}{\sqrt{\eta+\zeta(z)}}\frac{1}{\sqrt{\eta+\zeta(w)}}
    \frac{|m_0(z)-m_0(w)|}{|z-w|} + O\pth{\frac{\delta_3}{\eta^3}}.
\end{align*}
For $|z-w| \geq c_0$, we have
\begin{align*}
    &\oint \oint_{|z-w| \geq c_0}dzdw
\frac{1}{\sqrt{\eta+\zeta(z)}}\frac{1}{\sqrt{\eta+\zeta(w)}}
\frac{|m_0(z)-m_0(w)|}{|z-w|}\\
    &\lesssim \left(\oint \frac{1}{\sqrt{\eta+\zeta(z)}}dz\right)
\left(\oint \frac{1}{\sqrt{\eta+\zeta(w)}}dw\right)
\lesssim 1.
\end{align*}
For $|z-w|<c_0$, we apply $|m_0(z)-m_0(w)| \lesssim
\sqrt{\eta+\zeta(z)}+\sqrt{\eta+\zeta(w)}$ as above, so that
\begin{align*}
    &\oint \oint_{|z-w|<c_0}dzdw
\frac{1}{\sqrt{\eta+\zeta(z)}}\frac{1}{\sqrt{\eta+\zeta(w)}}
\frac{|m_0(z)-m_0(w)|}{|z-w|}\\
    &\lesssim \oint dz
    \frac{1}{\sqrt{\eta+\zeta(z)}} \oint dw \frac{1}{|z-w|}
    +\oint dw
    \frac{1}{\sqrt{\eta+\zeta(w)}} \oint dz \frac{1}{|z-w|}\\
    &\lesssim \log(1/\eta) \cdot \left(\oint dz
    \frac{1}{\sqrt{\eta+\zeta(z)}}+\oint dw
    \frac{1}{\sqrt{\eta+\zeta(w)}}\right) \lesssim \log(1/\eta).
\end{align*}
Combining the above yields $\|g\|^2 \lesssim n\log(1/\eta)$.
The argument for $\|h\|^2$ is the same as that for $\|g\|^2$.
\end{proof}

Finally, proceeding as in \prettyref{eq:lin-term-wigner}--\prettyref{eq:qua-term-wigner} and using the preceeding norm bounds, we obtain from \prettyref{eq:contour-goal1-wigner-QP}:
\[|Y_{12}| 
    \lesssim  ~ 	1+  \delta_4 \sqrt{n \log \frac{1}{\eta}}
    + \frac{\delta_4^2 n}{\sqrt{\eta}} + \frac{\delta_1  \delta^2_2 n}{\eta}
    \lesssim \frac{\delta_4^2 n }{\sqrt{\eta}}=
    \left(\log n\right)^{\kappa}/\sqrt{\eta},\]
with probability at least $1-n^{-D}$, for any constant $D$.
This implies the desired \prettyref{eq:Yoff} by the union bound.

\subsubsection{Diagonal entries}
We now consider $Y_{11}$.
Following the derivation from \prettyref{eq:contour-goal2} to \prettyref{eq:diag-dev} and using \prettyref{lmm:M_norm-QP} in place of \prettyref{lmm:M_norm}, we obtain, with probability at least $1-n^{-D}$ for any constant $D$,
\begin{align}
 \left| Y_{11} -\frac{1-\sigma^2}{2\pi} \Re  \frac{\Tr(M)}{n} \right|  \lesssim 
\left(\log n\right)^{\kappa} \frac{1}{\sqrt{\eta}},
\label{eq:diag-dev-QP}
\end{align}
where
\begin{align*}
M \triangleq & ~ \oint_{\Gamma} f(z) m_0(z)m_0(z+ \iu \eta )R_A^{(1)}(z)  \allones R_B^{(1)}(z+ \iu \eta )dz.
\end{align*}
The trace is computed by the following result, which parallels \prettyref{lmm:traceM} and \prettyref{lmm:m_product}:
\begin{lemma}\label{lmm:traceM-QP}
Suppose $\delta_3 \le \eta^2$. Assume the setting of \prettyref{lmm:traceM}. Define 
$$
M = \oint_{\Gamma} f(z) m_0(z)m_0(z+ \iu \eta )R_A(z)  \bJ R_B (z+ \iu \eta )dz.
$$
Then 
$$
\frac{1}{n} \Tr(M) = \frac{8+o_{\eta}(1)}{ \eta } + O\pth{ \frac{\sigma +\delta_3}{\eta^2}}.
$$
\end{lemma}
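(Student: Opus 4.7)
The plan is to follow the strategy of \prettyref{lmm:traceM}, carefully tracking the effect of the additional factor $f(z)$. Starting from
\[\Tr M = \oint_\Gamma f(z)\, m_0(z)\, m_0(z+\iu \eta)\, \bone^\top R_B(z+\iu \eta) R_A(z) \bone\, dz,\]
I apply the resolvent identity
\[R_B(z+\iu \eta) R_A(z) = \frac{1}{\iu \eta}\bigl[R_B(z+\iu \eta) - R_A(z) - R_B(z+\iu \eta)(A-B) R_A(z)\bigr]\]
and use the hypotheses \prettyref{eq:traceM_assump} to replace $\bone^\top R_A(z) \bone$ and $\bone^\top R_B(z+\iu \eta) \bone$ by $n m_0(z)$ and $n m_0(z+\iu \eta)$, incurring an error $O(\delta_3 n)$. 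This reduces the task to evaluating the leading integral
\[I \defn \oint_\Gamma f(z)\, m_0(z)\, m_0(z+\iu \eta) \bigl[m_0(z+\iu \eta) - m_0(z)\bigr]\, dz,\]
plus controllable error terms.

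Bounding these errors exploits the key observation that although $|f(z)| \lesssim 1/\eta$ in the worst case, $|f(z)|$ is large precisely where $|\Im m_0|$ is small (outside the spectrum), and in that same region $\|R_A(z)\bone\|$ and $\|R_B(z+\iu \eta)\bone\|$ are also small by the Ward identity, namely $\|R_A(z)\bone\|^2 \lesssim n|\Im m_0(z)|/|\Im z|+n\delta_3/|\Im z|$ and similarly for $R_B$. A case split of $\Gamma$ into bulk ($|\Re z|<2-c$), edge ($|\Re z|\approx 2$), and exterior ($|\Re z|>2+c$) regions shows that $|f(z)|\cdot\|R_A(z)\bone\|\cdot\|R_B(z+\iu \eta)\bone\| \lesssim n/\eta$ uniformly on $\Gamma$. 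Combined with $\oint_\Gamma |f(z)|\, |dz| \lesssim 1/\eta$ and $\|A-B\| \lesssim \sigma$, this yields
\[\frac{1}{n}\Tr M = \frac{I}{\iu \eta} + O\!\pth{\frac{\sigma+\delta_3}{\eta^2}}.\]

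To evaluate $I$, I deform $\Gamma$ inward to a thin contour $\Gamma_\varepsilon$ of imaginary height $\varepsilon<\eta/2$. This is valid since the integrand is analytic in the enclosed annular region except across the branch cut of $m_0(z)$ on $[-2,2]$---the branch cuts of $m_0(z\pm \iu\eta)$ lie at $|\Im z|=\eta>\eta/2$, outside $\Gamma$. As $\varepsilon\to 0$, the integral collapses to $\int_{-2}^{2}[F(x-\iu 0^+) - F(x+\iu 0^+)]\,dx$, where $F(z)$ denotes the integrand. Using the jump relations $m_0^+(x) - m_0^-(x) = 2\iu\pi\rho(x)$, $m_0^+ m_0^- = 1$, and $m_0^+ + m_0^- = -x$ from \prettyref{prop:m0wigner}, a direct algebraic simplification (factoring $m_0^- - m_0^+$ from the numerator and using $m_0^++x=-m_0^-$) gives
\[F(x-\iu 0^+) - F(x+\iu 0^+) = \frac{4\pi\rho(x)\, m_0(x+\iu \eta)\bigl[m_0(x+\iu \eta) + x\bigr]}{m_0(x+\iu \eta) - m_0(x-\iu \eta)}.\]
As $\eta \to 0$, $m_0(x\pm\iu \eta) \to m_0^\pm(x)$, so the jump tends to $-4\pi\rho(x)\,m_0^+(x)m_0^-(x)/[2\iu\pi\rho(x)] = 2\iu$, using $m_0^+m_0^-=1$. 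Hence $I \to \int_{-2}^{2} 2\iu\,dx = 8\iu$, yielding $\Tr(M)/n = (8+o_\eta(1))/\eta + O((\sigma+\delta_3)/\eta^2)$ after dividing by $\iu\eta$.

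The main obstacle will be ensuring uniform control of the jump near the edges $x=\pm 2$, where $\rho(x)$ and $|\Im m_0(x\pm\iu\eta)|$ both vanish, in order to justify the $\eta\to 0$ limit. I expect to handle this by a bounded-convergence argument, using the square-root edge behavior from \prettyref{eq:Imm0} to check that both $|F(x\pm \iu 0^+)|$ remain bounded uniformly in $\eta$ on $[-2,2]$, so that pointwise convergence of the jump on $(-2,2)$ upgrades to convergence of the $x$-integral by dominated convergence.
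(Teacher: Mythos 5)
Your proposal is correct and follows essentially the same route as the paper: the same decomposition of $\Tr M$ via the resolvent identity, the same Ward-identity-plus-total-sum bound to control the $(A-B)$ error term (with the crucial observation that $|f(z)|$ is compensated by $\|R_A(z)\bone\|\|R_B(z+\iu\eta)\bone\|$), and the same two-stage contour deformation plus bounded convergence to evaluate the leading integral. Your algebraic jump formula $F(x-\iu 0^+)-F(x+\iu 0^+)=\frac{4\pi\rho(x)\,m_0(x+\iu\eta)[m_0(x+\iu\eta)+x]}{m_0(x+\iu\eta)-m_0(x-\iu\eta)}$ and its limit $2\iu$ check out (via $m_0^+{+}m_0^-=-x$ and $m_0^+m_0^-=1$), and is simply a more explicit packaging of the cancellation that the paper exhibits by splitting the deformed contour into its $\C^+$- and $\C^-$-limit pieces; the bounded-convergence justification you flag as the main obstacle is handled exactly as in the paper using $|f(x)|\asymp 1/\sqrt{\eta+\zeta(x)}$ and $|m_0(x+\iu\eta)-m_0^\pm(x)|\lesssim\sqrt{\eta+\zeta(x)}$.
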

\begin{proof}
Analogous to \prettyref{eq:TrMA-B}, we have
$\frac{1}{n} \Tr(M) =\termI - \termII$, where
\begin{align*}
\termI &=\frac{1}{ \iu \eta }\oint_{\Gamma} f(z) m_0(z)m_0(z+ \iu \eta )
 \frac{1}{n} \bone^\top (R_B(z+ \iu \eta )-R_A(z))\ones dz \\
\termII &=\frac{1}{ \iu \eta }\oint_{\Gamma} f(z)m_0(z)m_0(z+ \iu \eta )
 \frac{1}{n} \bone^\top R_B(z+ \iu \eta )(A-B)R_A(z) \bone  \,dz.
\end{align*}
To bound (II), consider two cases:
\begin{itemize}
	\item For $z \in \Gamma$ with $|\Im z| = \eta/2$, by the Ward identity and \prettyref{eq:wignertotalsumlaw}, we have
$$\left\| R_A(z) \bone \right\|^2 = \frac{2}{\eta}  |\Im  \bone^\top R_A(z) \bone|  
\lesssim \frac{n}{\eta} (|\Im m_0(z)| + O(\delta_3)).$$
and similarly, 
$$\left\| R_B(z+\iu \eta) \bone \right\|^2 \lesssim \frac{n}{\eta} (|\Im m_0(z+\iu \eta)| + O(\delta_3)).$$
Thus it holds that 
$$
\left| \bone^\top  R_B(z+ \iu \eta )(A-B)R_A(z) \bone \right|  \lesssim \frac{n \sigma}{\eta} \left(\sqrt{|\Im m_0(z) \Im m_0(z+\iu \eta)|} + \sqrt{\delta_3} \right).
$$
Using \prettyref{eq:Imm0} and \prettyref{eq:flb3}, we conclude that 
$$
|f(z)| \sqrt{|\Im m_0(z) \Im m_0(z+\iu \eta)|} \leq \frac{2\sqrt{|\Im m_0(z) \Im m_0(z+\iu \eta)|}}{|\Im m_0(z+\iu \eta)| + |\Im m_0(z-\iu \eta)|} \asymp 1
$$ 
for all $z \in \Gamma$ with $|\Im z| = \eta/2$. 

\item For $z\in \Gamma$ with $\Re z= \pm 3$, since $\|A\| \le 2.5$, $\left| \bone^\top  R_B(z+ \iu \eta )(A-B)R_A(z) \bone \right| \lesssim n \sigma$. 
\end{itemize}
Furthermore, by \prettyref{eq:flb3}, $|f(z)| \lesssim \frac{1}{\eta}$ for all $z\in\Gamma$. Combining the above two cases yields
\[
|\termII|\lesssim \frac{\sigma}{\eta^2} \pth{1 + \frac{\sqrt{\delta_3}}{\eta}} + \frac{\sigma}{\eta} \asymp \frac{\sigma}{\eta^2},
\]
since $\delta_3 \leq \eta^2$ by the assumption. 

For (I), applying (\ref{eq:wignertotalsumlaw}) again
and plugging the definition of $f(z)$ yields
\[
\termI =\frac{2}{ \eta }\oint_{\Gamma} m_0(z)m_0(z+ \iu \eta )
 \frac{m_0(z+ \iu \eta )-m_0(z)}{m_0(z+\iu \eta)-m_0(z-\iu \eta)} dz + O\pth{\frac{\delta_3 }{\eta^2}}.
\]
We now apply an argument similar to that of
\prettyref{lmm:m_product}: Note that
\[|m_0(z+\iu\eta)-m_0(z-\iu \eta)|
\geq \Im (m_0(z+\iu\eta)-m_0(z-\iu \eta))
\gtrsim \eta\]
by (\ref{eq:Imm0}), so the integrand is bounded for fixed $\eta$.
Then deforming $\Gamma$ to $\Gamma_\epsilon$ with vertices
$\pm (2+\eps) \pm \iu \eps$, taking $\eps \to 0$ for fixed $\eta$, and
applying the bounded convergence theorem, we have the equality
\begin{align}
&\oint_{\Gamma} m_0(z)m_0(z+ \iu \eta )
 \frac{m_0(z+ \iu \eta )-m_0(z)}{m_0(z+\iu \eta)-m_0(z-\iu \eta)} dz\nonumber\\
    &=\int_2^{-2} m_0^+(x)m_0(x+ \iu \eta )
 \frac{m_0(x+ \iu \eta )-m_0^+(x)}{m_0(x+\iu \eta)-m_0(x-\iu \eta)} dx
+\int_{-2}^2 m_0^-(x)m_0(x+ \iu \eta )
 \frac{m_0(x+ \iu \eta )-m_0^-(x)}{m_0(x+\iu \eta)-m_0(x-\iu \eta)}
dx.\label{eq:tmpintegralx}
\end{align}

We show that these integrands are uniformly bounded over small $\eta$:
For any constant $\delta>0$ and for $|x| \leq 2-\delta$,
we have the lower bound
\begin{equation}\label{eq:m0pmdiff}
    |m_0(x+\iu \eta)-m_0(x-\iu \eta)|=2\Im m_0(x+\iu \eta) \gtrsim
\sqrt{\zeta(x)+\eta} \geq \sqrt{\delta}.
\end{equation}
Then the above integrands are bounded by $C/\sqrt{\delta}$
for $|x| \leq 2-\delta$.
For $|x| \in [2-\delta,2]$, let us apply
\[|m_0(x+\iu \eta)-m_0^+(x)| \lesssim \sqrt{\zeta(x)+\eta}\]
as follows from (\ref{eq:m0diffbound}) and taking the limit $w \in \C^+ \to x$.
We have also $|m_0^+(x)-m_0^-(x)| \asymp
\sqrt{\zeta(x)} \lesssim \sqrt{\zeta(x)+\eta}$, so that
\[|m_0(x+\iu \eta)-m_0^-(x)| \lesssim \sqrt{\zeta(x)+\eta}.\]
Combining these cases with the first inequality of (\ref{eq:m0pmdiff}),
we see that the integrands of
(\ref{eq:tmpintegralx}) are uniformly bounded for all small $\eta$.

Now apply the bounded convergence theorem and
take the limit $\eta \to 0$, noting that
$\lim_{\eta \to 0} m_0(x+\iu\eta)=m_0^+(x)$ and
$\lim_{\eta \to 0} m_0(x-\iu\eta)=m_0^-(x)$. We get
\begin{align*}
&\lim_{\eta \to 0}
\oint_{\Gamma} m_0(z)m_0(z+ \iu \eta )
 \frac{m_0(z+ \iu \eta )-m_0(z)}{m_0(z+\iu \eta)-m_0(z-\iu \eta)} dz\\
&=\int_{-2}^2 m_0^-(x)m_0^+(x)\frac{m_0^+(x)-m_0^-(x)}
{m_0^+(x)-m_0^-(x)}dx=\int_{-2}^2 |m_0^+(x)|^2 dx=4.
\end{align*}
This gives $\termI=(8+o_\eta(1))/\eta+O(\delta_3/\eta^2)$. Combining with the
bound for $\termII$ yields the lemma.
\end{proof}
Finally, combining \prettyref{eq:diag-dev-QP} with \prettyref{lmm:traceM-QP} and
$\delta_3 \ll \eta$ from
\prettyref{eq:QP-assump3}, and applying a union bound yields the desired \prettyref{eq:Ydiag}.


\section{Proof of resolvent bounds}\label{sec:locallaw}

In this section, we prove Theorem \ref{thm:locallawwigner}.
The entrywise bounds of part (a) are essentially the local semicircle
law of \cite[Theorem 2.8]{EKYYsparse}, restricted to the simpler
domain $\{z:\operatorname{dist}(z,[-2,2]) \geq (\log n)^{-a}\}$ and with small
modifications of the logarithmic factors. The bound in (b) follows from (a)
using a straightforward Schur complement identity. The bound in (c) is
more involved, and relies on the fluctuation averaging technique of
\cite[Section 5]{EKYYsparse}. We provide a proof of all
three statements using the tools of \cite{EKYYsparse}.

For each statement, it suffices to establish the claim with the stated
probability for each individual point $z \in D$. The uniform statement
over $z \in D$ then follows from a union bound over a sufficiently fine
discretization of $D$ (of cardinality an arbitrarily large polynomial in $n$)
and standard Lipschitz bounds for $m_0$ and $R_{jk}$ on the event of $\|A\| \leq
2.5$---we omit these details for brevity.

\subsection{Notation and matrix identities}

In this section,
for $S \subset [n]$, denote by $A^{(S)} \in \R^{n \times n}$ the matrix $A$ with
all elements in rows and columns belonging to $S$ replaced by 0. Denote
\[R^{(S)}(z)=(A^{(S)}-z \bI )^{-1} \in \C^{n \times n}.\]
Note that $R^{(S)}(z)$ is block-diagonal with respect to the block decomposition
$\C^n=\C^S \oplus \C^{[n] \setminus S}$, with $S \times S$ block
equal to $(-1/z) \bI _{|S|}$ and $([n] \setminus S) \times ([n] \setminus S)$ block
equal to the resolvent of the corresponding minor of $A$. (We will typically
only access elements of $R^{(S)}$ in this $([n] \setminus S) \times ([n]
\setminus S)$ block, in which case $R^{(S)}$ may be understood as the resolvent
of the minor of $A$.)

For $i \in [n]$, we write as shorthand
\[iS= \{i\} \cup S, \qquad \sum_k^{(S)}=\sum_{k \in [n] \setminus S}.\]
We usually omit the spectral argument $z$ for brevity.

\begin{lemma}[Schur complement identities]
For any $j \in [n]$,
\begin{equation}\label{eq:Rjj}
\frac{1}{R_{jj}}=a_{jj}-z-\sum_{k,\ell}^{(j)} a_{jk}R_{k\ell}^{(j)}a_{\ell j}.
\end{equation}
For any $j \ne k \in [n]$,
\begin{align}
R_{jk}&=-R_{jj}\sum_\ell^{(j)} a_{j\ell}R^{(j)}_{\ell k}
=R_{jj}R_{kk}^{(j)}\left(-a_{jk}+\sum_{\ell,m}^{(jk)} a_{j\ell}R_{\ell
m}^{(jk)}a_{mk}\right),
\label{eq:Rjk}\\
\coord_k^\top R &=\coord_k^\top R^{(j)} +\frac{R_{kj}}{R_{jj}} \cdot \coord_j^\top R , \label{eq:eR1}\\
\frac{1}{R_{kk}}&=\frac{1}{R_{kk}^{(j)}}
-\frac{(R_{kj})^2}{R_{kk}^{(j)}R_{jj}R_{kk}}.\label{eq:Rkkinv}
\end{align}
For any $j,k,\ell \in [n]$ with $j \notin \{k,\ell\}$,
\begin{equation}\label{eq:LOO}
R_{k\ell}=R_{k\ell}^{(j)}+\frac{R_{kj}R_{j\ell}}{R_{jj}}.
\end{equation}
These identities hold also for any $S \subset [n]$
with $R$ replaced by $R^{(S)}$ and with $j,k,\ell \in [n] \setminus S$.
\end{lemma}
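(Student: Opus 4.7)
The plan is to derive all five identities as direct algebraic consequences of the Schur complement formula (Lemma~\ref{lem:schur}) applied to $M = A - z\bI$, together with the first-order resolvent perturbation identity. No probabilistic input enters; these are deterministic facts valid whenever $A-z\bI$ and the relevant minors are invertible.

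For \prettyref{eq:Rjj} and the first form of \prettyref{eq:Rjk}, I would permute so that index $j$ appears first, and apply \prettyref{eq:schur} with the $A$-block equal to the scalar $a_{jj}-z$ and the $D$-block equal to the $(n-1)\times(n-1)$ minor of $A-z\bI$ with row/column $j$ removed. The scalar Schur complement yields $R_{jj}^{-1} = a_{jj}-z - A_o^\top R^{(j)}_{\mathrm{sub}} A_o$, where $A_o$ is the $j$-th column of $A$ with the $j$-th entry deleted and $R^{(j)}_{\mathrm{sub}}$ is the inverse of the minor; expanding the quadratic form gives \prettyref{eq:Rjj}. Reading the off-diagonal block $-SBD^{-1}$ of \prettyref{eq:schur} at position $(j,k)$ gives the first form of \prettyref{eq:Rjk}. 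The second form then follows by iterating: apply the first form once more to each $R^{(j)}_{\ell k}$, now removing index $k$ from the minor, and separate the $\ell=k$ contribution, which produces the stand-alone $-a_{jk}$ term.

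For \prettyref{eq:LOO}, the ``remove-one'' identity from which the remaining two follow, I would use the perturbation formula $R - R^{(j)} = -R^{(j)}(A - A^{(j)})R$, noting that $A - A^{(j)}$ is supported only in the $j$-th row and column. Taking the $(k,\ell)$ entry with $j\notin\{k,\ell\}$ and using $R^{(j)}_{kj}=0$ for $k\ne j$ (from the block-diagonal structure of $R^{(j)}$), only the contribution from the $j$-th row of $A$ survives, giving $R_{k\ell}-R^{(j)}_{k\ell} = -\bigl(\sum_{m\ne j} R^{(j)}_{km}a_{mj}\bigr)R_{j\ell}$. The parenthetical sum equals $-R_{kj}/R_{jj}$ by the first form of \prettyref{eq:Rjk} (after using symmetry of $A$ and of $R^{(j)}$), yielding \prettyref{eq:LOO}. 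Then \prettyref{eq:eR1} is just the vector form of \prettyref{eq:LOO}, valid at all $\ell$: for $\ell\ne j$ it is \prettyref{eq:LOO} directly, while for $\ell=j$ the right side collapses to $R_{kj}R_{jj}/R_{jj}=R_{kj}$, matching the $j$-th entry of $\coord_k^\top R$. Finally, setting $\ell=k$ in \prettyref{eq:LOO} gives $R_{kk}-R^{(j)}_{kk}=R_{kj}^2/R_{jj}$, and rearranging $1/R_{kk}-1/R^{(j)}_{kk}=(R^{(j)}_{kk}-R_{kk})/(R_{kk}R^{(j)}_{kk})$ produces \prettyref{eq:Rkkinv}.

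The closing clause of the lemma---validity for arbitrary $S\subset[n]$ with $R\mapsto R^{(S)}$ and $A\mapsto A^{(S)}$---is immediate, since further minors of $A^{(S)}$ with respect to indices $j,k,\ell\in[n]\setminus S$ coincide with the minors of $A$ with respect to the union, and the relevant entries of $R^{(S)}$ agree with the inverse of the corresponding sub-minor. The only obstacle anywhere in the proof is careful bookkeeping: making sure each sum $\sum^{(\cdot)}$ is restricted to the correct complement index set and that each resolvent inverse or Schur complement is well-defined; there is no deep idea beyond the Schur identity and a single resolvent expansion.
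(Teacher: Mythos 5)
Your proposal is correct and fully self-contained, whereas the paper simply cites \cite[Lemma 4.5]{EKYYlocal} and \cite[Lemma 4.2]{EYYbulk} for all identities except \eqref{eq:eR1}, and deduces \eqref{eq:eR1} from \eqref{eq:LOO} exactly as you do. You instead derive everything from Lemma~\ref{lem:schur} and the exact resolvent identity $R-R^{(j)}=-R^{(j)}(A-A^{(j)})R$. Each step checks out: \eqref{eq:Rjj} and the first form of \eqref{eq:Rjk} read off the scalar Schur complement and the $-SBD^{-1}$ block; the second form of \eqref{eq:Rjk} follows by splitting off the $\ell=k$ term and applying the first form once more to $R^{(j)}_{\ell k}$ (you implicitly use symmetry of $A$ and of $R^{(j)}$, but $A$ is symmetric throughout the paper so this is fine); \eqref{eq:LOO} comes from the perturbation formula together with $R^{(j)}_{kj}=0$ and the first form of \eqref{eq:Rjk} to rewrite $\sum_{m}^{(j)}R^{(j)}_{km}a_{mj}=-R_{kj}/R_{jj}$; \eqref{eq:eR1} is the $\ell=j$ extension; and \eqref{eq:Rkkinv} is algebra from the $\ell=k$ case of \eqref{eq:LOO}. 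The only cosmetic nit is that the perturbation identity is exact, not ``first-order,'' but that does not affect the argument. The trade-off is clear: the paper's citation is shorter; your derivation is more transparent and makes the lemma independent of the external references.
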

\begin{proof}
For all but \eqref{eq:eR1}, see \cite[Lemma 4.5]{EKYYlocal} and \cite[Lemma
4.2]{EYYbulk}. As for \eqref{eq:eR1}, it is equivalent to verify that \prettyref{eq:LOO} holds also for $\ell=j$, which simply follows from $R^{(j)}_{kj}=0$, due to the block diagonal structure of $R^{(j)}$.
\end{proof}

\subsection{Entrywise bound}

We say an event occurs w.h.p.\ if its 
probability is at least $1-e^{-c(\log n)^{1+\eps}}$ for a universal
constant $c>0$. Let us show that \eqref{eq:wigneroffdiaglaw} and
\eqref{eq:wignerdiaglaw} hold for $z \in D$ w.h.p.

We start with \prettyref{eq:wignerdiaglaw}. Note that the $j$th row $\{a_{jk}: k\in[n]\}$ is independent of $A^{(j)}$ and hence $R^{(j)}$. 
Applying \eqref{eq:Rjj}, \eqref{eq:abound}, and \eqref{eq:quad1} conditional on
$A^{(j)}$, w.h.p.\ for all $j$,
\begin{align*}
\left|\frac{1}{R_{jj}}+z+\frac{1}{n}\sum_k^{(j)} R_{kk}^{(j)}\right|
=\left|a_{jj}-\sum_{k,\ell}^{(j)} a_{jk}R_{k\ell}^{(j)}a_{\ell j}+
\frac{1}{n}\sum_k^{(j)} R_{kk}^{(j)}\right|
&\leq (\log
n)^{2+2\eps}\left(\frac{1}{\sqrt{d}}+\frac{2\|R^{(j)}\|_\infty}{\sqrt{d}}
+\frac{\|R^{(j)}\|_F}{n}\right).
\end{align*}
Note that $\|R^{(j)}\|_\infty \le \|R^{(j)}\|$,
$\|R^{(j)}\|_F \le \sqrt{n}\|R^{(j)}\|$, and $d \leq n$.
For $z \in D_1$ and any $S \subset [n]$,
we have $\|R^{(S)}\| \le 1/|\Im z| \le (\log n)^a$.
For $z \in D_2$, we have $\|R^{(S)}\| \le 10$ on the event $\|A\| \le
2.5$, which occurs w.h.p.\ by Lemma \ref{lmm:normbound}. Then in both cases,
we get
\begin{equation}
\left|\frac{1}{R_{jj}}+z+\frac{1}{n}\sum_k^{(j)} R_{kk}^{(j)}\right|
\lesssim \frac{(\log n)^{2+2\eps+a}}{ \sqrt{d} }.\label{eq:Rjjinvtmp}
\end{equation}

Since $|z| \leq 10$, $|R_{kk}^{(j)}| \le (\log n)^a$, and
$d \gg (\log n)^{4+4\eps}$, this implies $1/|R_{jj}| \lesssim (\log n)^a$.
Let $m_n(z)=n^{-1}\Tr R(z)$ be the empirical Stieltjes transform. Then
\[\left|m_n-\frac{1}{n}\sum_k^{(j)} R_{kk}^{(j)}\right|
=\left|\frac{1}{n}R_{jj}+\frac{1}{n}\sum_k^{(j)} (R_{kk}-R_{kk}^{(j)})\right|
\overset{\eqref{eq:LOO}}{=} \left|\frac{1}{n}\sum_k \frac{R_{kj}^2}{R_{jj}}\right|
= \frac{\|\coord_j^\top R\|^2}{n|R_{jj}|} 
\leq \frac{\|R\|^2}{n|R_{jj}|} 
\lesssim \frac{(\log n)^{3a}}{n}.\]
Using $d \leq n$ and combining with \eqref{eq:Rjjinvtmp}, w.h.p.\ for all $j$,
\begin{equation}\label{eq:Rjjrecurrence}
\left|\frac{1}{R_{jj}}+z+m_n\right| \lesssim \frac{(\log n)^{2+2\eps+a}}{
\sqrt{d} }.
\end{equation}
Then by the triangle inequality, also w.h.p.\ for all $j \neq k$,
\[\left|\frac{1}{R_{jj}}-\frac{1}{R_{kk}}\right|
\lesssim \frac{(\log n)^{2+2\eps+a}}{ \sqrt{d} },\]
so
\[\left|\frac{m_n}{R_{jj}}-1\right|
=\left|n^{-1} \sum_k \frac{R_{kk}-R_{jj}}{R_{jj}}\right|
\leq\max_k \left|\frac{R_{kk}-R_{jj}}{R_{jj}}\right|
=\max_k |R_{kk}| \left|\frac{1}{R_{jj}}-\frac{1}{R_{kk}}\right|
\lesssim \frac{(\log n)^{2+2\eps+2a}}{\sqrt{d}}.\]
For $d \gg (\log n)^{4+4\eps+4a}$, this implies $ \frac{3}{2} |R_{jj}| \ge |m_n| \ge |R_{jj}|/2$ w.h.p.\
for all $j$. Then also
\[\left|\frac{1}{R_{jj}}-\frac{1}{m_n}\right|
=\frac{|R_{jj}-m_n|}{|R_{jj}||m_n|}
\le \max_k \frac{|R_{jj}-R_{kk}|}{|R_{jj}||m_n|}
\le \max_k \frac{2|R_{jj}-R_{kk}|}{|R_{jj}||R_{kk}|}
= 2\max_k \left|\frac{1}{R_{jj}}-\frac{1}{R_{kk}}\right|,\]
so
\begin{equation}\label{eq:Rjjinvest}
\left|\frac{1}{R_{jj}}-\frac{1}{m_n}\right| \lesssim
\frac{(\log n)^{2+2\eps+a}}{ \sqrt{d} }.
\end{equation}
Combining with \eqref{eq:Rjjrecurrence}, w.h.p.\ we have
\[\frac{1}{m_n}+z+m_n=r_n, \qquad |r_n| \lesssim \frac{(\log n)^{2+2\eps+a}}{
\sqrt{d} } \ll (\log n)^{-a}.\]

Solving for $m_n$ yields
\[m_n \in \frac{-z+r_n \pm \sqrt{z^2-4-2zr_n+r_n^2}}{2}\]
where the right side denotes the two complex square-roots.
Note that
$|z^2-4|=|z-2| \cdot |z+2| \gtrsim (\log n)^{-a}|z|$
and $|z| \ge (\log n)^{-a}$ for all $z \in D$.
Then, as $(\log n)^{-a} \gg |r_n|$,
we have $|z^2-4| \gg |zr_n| \gg |r_n|^2$.
Letting $m_0$ be the Stieltjes transform of the semicircle law, and letting
$\tilde{m}_0=1/m_0$ be the other root of the quadratic equation
\eqref{eq:wignerequation}, we obtain by a Taylor expansion of the square-root
that
\begin{equation}\label{eq:dichotomy}
\min(|m_n-m_0|,|m_n-\tilde{m}_0|)
\lesssim |r_n|\left(1+\frac{|z|}{\sqrt{|z^2-4|}}\right)
\lesssim \frac{|r_n|}{\sqrt{\zeta(z)+|\Im z|}},
\end{equation}
where $\zeta(z)$ is as defined in Proposition \ref{prop:m0wigner}.

To argue that this bound holds for $|m_n-m_0|$ rather
than $|m_n-\tilde{m}_0|$, consider
first $z \in D_1$ with $\Im z>0$. In this case $m_n\in\complex_+$ and $\tilde m_0\in\complex_-$. Furthermore, 
note that (\ref{eq:Imm0}) implies
$\Im m_0(z) \geq (\Im z)/\sqrt{\zeta(z)+\Im z}$, and hence
$\Im \tilde{m}_0=-(\Im m_0)/|m_0|^2 \le -c(\log n)^{-a}/\sqrt{\zeta(z)+\Im z}$.
Since $\Im m_n>0$ and $|r_n| \ll (\log n)^{-a}$, 
\eqref{eq:dichotomy} must hold for
$|m_n-m_0|$ rather than $|m_n-\tilde{m}_0|$.
The same argument applies for $z \in D_1$ with $\Im z<0$. For $z \in D_2$,
we have $||m_0(z)|-1| \geq c$ and hence 
$|m_0(z)-\tilde{m}_0(z)|>c$ for a constant $c>0$. Consider the
point $z' \in D_1 \cap D_2$ with $\Re z'=\Re z$ and
$\Im z'=(\log n)^{-a}$. 
Note that for all $z\in D_2$, $|\frac{d}{dz} m_0(z)| \lesssim 1$ and,  on the
event $\|A\| \le 2.5$, $|\frac{d}{dz} m_n(z)| \lesssim 1$ also. Thus 
$|m_0(z)-m_0(z')| \le C(\log n)^{-a}$
and $|m_n(z)-m_n(z')| \le C(\log n)^{-a}$. Since we have already shown that \eqref{eq:dichotomy}
holds for $|m_n(z')-m_0(z')|$ in the previous case, this implies also that
\eqref{eq:dichotomy} must hold for $|m_n-m_0|$ rather than for
$|m_n-\tilde{m}_0|$.

Applying $|\Im z| \geq (\log n)^{-a}$, (\ref{eq:dichotomy}) yields w.h.p.
\begin{equation}
|m_n-m_0| \lesssim (\log n)^{a/2}|r_n| \lesssim
\frac{(\log n)^{2+2\eps+3a/2}}{\sqrt{d}}.
\label{eq:dichotomy1}
\end{equation}
Recalling (\ref{eq:Rjjinvest}), $|R_{jj}| \leq (\log n)^a$ and $|m_n| \le \frac{3}{2} |R_{jj}|$, we get
\begin{equation}\label{eq:Rjjest}
|R_{jj}-m_n| \lesssim |R_{jj}||m_n|
\cdot \frac{(\log n)^{2+2\eps+a}}{\sqrt{d}}
\lesssim \frac{(\log n)^{2+2\eps+3a}}{\sqrt{d}}.
\end{equation}
Combining the last two displayed equations gives the weak estimate
\[|R_{jj}-m_0| \lesssim \frac{(\log n)^{2+2\eps+3a}}{\sqrt{d}}.\]
Since $d \gtrsim (\log n)^{4+4\eps+6a}$ by assumption, this and $|m_0(z)| \asymp 1$ imply $|R_{jj}| \lesssim
1$ w.h.p. Then applying the last display and \prettyref{eq:dichotomy1} to the first inequality of (\ref{eq:Rjjest}) yields the desired
estimate
\[|R_{jj}-m_0|
\leq |R_{jj}-m_n|+|m_n-m_0| \lesssim \frac{(\log n)^{2+2\eps+3a/2}}{\sqrt{d}}.\]

To show \prettyref{eq:wigneroffdiaglaw} for the off-diagonals, we now
apply \eqref{eq:Rjk}, \eqref{eq:abound}, \eqref{eq:quad2} conditional on
$R^{(jk)}$, $|R_{jj}| \lesssim 1$, $|R_{kk}^{(j)}| \lesssim 1$,
$\|R^{(jk)}\|_\infty \le (\log n)^a$,
$\|R^{(jk)}\|_F \le \sqrt{n}(\log n)^a$, and $d \le n$ to get w.h.p.
\begin{align*}
|R_{jk}|&=|R_{jj}||R_{kk}^{(j)}|\left|-a_{jk}+\sum_{\ell,m}^{(jk)}
a_{j\ell}R_{\ell m}^{(jk)}
a_{mk}\right|\nonumber\\
&\lesssim (\log n)^{2+2\eps}
\left(\frac{1}{\sqrt{d}}+\frac{2\|R^{(jk)}\|_\infty}{ \sqrt{d} }+\frac{\|R^{(jk)}\|_F}{n}\right)
\lesssim \frac{(\log n)^{2+2\eps+a}}{ \sqrt{d} }.
\end{align*}

\subsection{Row sum bound}\label{sec:rowsum}

We now show that \eqref{eq:wignerrowsumlaw} holds for $z \in D$ w.h.p. Set
\begin{equation}
\cZ_i\triangleq \sum_{j,k}^{(i)} a_{ik}R_{kj}^{(i)}  =\sum_k^{(i)} a_{ik} \left(\coord_k^\top R^{(i)} \bone \right)
\end{equation}
where the last equality holds because $R^{(i)}_{ki}=0$ for $k\neq i$.
Applying \eqref{eq:Rjk},
\[\coord_i^\top R \bone =\sum_j R_{ij}  =R_{ii}  -R_{ii}\cZ_i.\]
Then applying \eqref{eq:wignerdiaglaw}, w.h.p.\ for every $i \in [n]$,
\begin{equation}\label{eq:eR1tmp}
\left| \coord_i^\top R \bone  \right| \lesssim 1+|\cZ_i|.
\end{equation}
Applying \eqref{eq:linearconcentration} conditional on $A^{(i)}$,
w.h.p.\ for every $i \in [n]$,
\begin{equation}\label{eq:Zbound}
|\cZ_i| \le (\log n)^{1+\eps} \left(\frac{\max_{k \ne i} |\coord_k^\top R^{(i)} \bone |}{ \sqrt{d} }
+\sqrt{\frac{\sum_k^{(i)} |\coord_k^\top R^{(i)} \bone |^2}{n}}\right).
\end{equation}
For the second term above, we apply $\|R^{(i)}\| \le (\log n)^a$ w.h.p.\ to get
\begin{equation}\label{eq:term2bound}
\sum_k^{(i)} \left|\coord_k^\top R^{(i)} \bone \right|^2 \le \bone^\top \overline{R^{(i)}}R^{(i)} \bone
\le (\log n)^{2a} n.
\end{equation}
For the first term, we apply \eqref{eq:eR1}, \eqref{eq:wigneroffdiaglaw},
and \eqref{eq:wignerdiaglaw} to get, w.h.p.\ for all $k \ne i$,
\begin{equation}\label{eq:maxbound}
\left|\coord_k^\top R^{(i)} \bone \right|
=\left|\coord_k^\top R \bone -\frac{R_{ki}}{R_{ii}} \cdot \coord_i^\top R \bone \right|
\le \left|\coord_k^\top R \bone \right|+\frac{C(\log n)^{2+2\eps+a}}{ \sqrt{d} }  \left|\coord_i^\top R \bone \right|.
\end{equation}
Applying $d \gg (\log n)^{4+4\eps+2a}$ and substituting 
\eqref{eq:term2bound} and \eqref{eq:maxbound} into \eqref{eq:Zbound}
and then into \eqref{eq:eR1tmp}, we get that 
\begin{equation}\label{eq:rowsumbd_0}
\left| \coord_i^\top R \bone \right| \lesssim
1+(\log n)^{1+\eps}\left(\frac{\max_k
|\coord_k^\top R \bone  |}{ \sqrt{d} }+ (\log n)^a  \right)
\end{equation}
Taking the maximum over $i$ and 
rearranging yields (\ref{eq:wignerrowsumlaw}).

\subsection{Total sum bound}\label{sec:totalsum}
Finally, we show that \eqref{eq:wignertotalsumlaw} holds with probability
$1-e^{-c(\log n)(\log\log n)}$ for $z \in D$.
As above, we set 
\begin{equation}\label{eq:Zi}
\cZ_i=\sum_{j,k}^{(i)} a_{ik}R_{kj}^{(i)} =\sum_k^{(i)} a_{ik} \left(\coord_k^\top R^{(i)} \bone \right).
\end{equation}
Note that if we apply \eqref{eq:term2bound}, \eqref{eq:maxbound},
and \eqref{eq:wignerrowsumlaw} to \eqref{eq:Zbound}, we obtain w.h.p.\ that
for every $i \in [n]$,
\begin{equation}\label{eq:Zboundfinal}
|\cZ_i| \le (\log n)^{1+\eps+a}.
\end{equation}
The main step of the proof of \eqref{eq:wignertotalsumlaw}
is to use the weak dependence of $\cZ_1,\ldots,\cZ_n$
to obtain a bound on $n^{-1}\sum_i \cZ_i$ that is better than
$(\log n)^{1+\eps+a}$. The idea is encapsulated by the following
abstract lemma from \cite{EKYYsparse}.

\begin{lemma}[Fluctuation averaging]\label{lem:fluctuationavg}
Let $\Xi$ be an event defined by $A$, let $\cZ_1,\ldots,\cZ_n$ be random
variables which are functions of $A$,
let $p$ be an ($n$-dependent) even integer,
and let $x,y>0$ be deterministic positive quantities.
Suppose there exist random variables $\cZ_i^{[U]}$,
indexed by $U \subseteq [n]$ and $i \in [n] \setminus U$,
which satisfy $\cZ_i^{[\emptyset]}=\cZ_i$ as well as the following conditions:
\begin{enumerate}[(i)]
\item Let $a_i$ denote the $i^\text{th}$ row of $A$. Then
$\cZ_i^{[U]}$ is independent of $\{a_j:j \in U\}$, and $\E_i\left[\cZ_i^{[U]}\right]=0$ where $\E_i$ is
the partial expectation over only $a_i$.
\item For any $U \subseteq S \subset [n]$ with $|S| \le p$, and for any
$i \notin S$, denote $u=|U|+1$ and
\begin{equation}\label{eq:ZiSU}
\cZ_i^{S,U}=\sum_{T:\,T \subseteq U} (-1)^{|T|}
\cZ_i^{[(S \setminus U) \cup T]}.
\end{equation}
Then for a constant $C>0$ and any integer $r \in [0,p]$,
\[\E\left[\1\{\Xi\} \left|\cZ_i^{S,U} \right|^r\right] \le \Big(y(Cxu)^u\Big)^r.\]
Furthermore,
\[x \le 1/(p^5 \log n).\]
\item Let $\mathcal{A} \subset \R^{n \times n}$
be the matrices satisfying $\Xi$, i.e., $\Xi=\{A\in\mathcal{A}\}$.
Let $\mathcal{A}_i=\{B \in \R^{n \times n}:B^{(i)}=A^{(i)} \text{ for some }
A \in \mathcal{A}\}$, and define the event $\Xi_i=\{A \in \mathcal{A}_i\}$.
For a constant $C>0$ and any $U,S,i$ as above,
$\E\left[\1\{\Xi_i\}\left|\cZ_i^{S,U} \right|^2\right] \le n^{Cp}$.
\item For a constant $C>0$ and any $U \subseteq [n]$,
$\1\{\Xi\}\left|\cZ_i^{[U]} \right| \le yn^C$.
\item For a constant $\eps>0$, $\P[\Xi] \ge 1-e^{-c(\log n)^{1+\eps}p}$.
\end{enumerate}
Then for constants $C',n_0>0$ depending on $C,\eps$ above, and for
all $n \ge n_0$,
\[
\P\left[\1\{\Xi\}\left|n^{-1}\sum_i \cZ_i\right| \ge p^{12}
y(x^2+n^{-1})\right] \le (C'/p)^p.\]
\end{lemma}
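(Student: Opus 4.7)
The plan is to bound the $p$-th moment
\[\E\!\left[\1\{\Xi\}\,\bigl|n^{-1}\textstyle{\sum_i}\cZ_i\bigr|^{p}\right] \;\lesssim\; \bigl(Cp^{12}\,y(x^{2}+n^{-1})\bigr)^{p}\]
and then invoke Markov's inequality; this is exactly the standard fluctuation averaging scheme developed in \cite[Section~5]{EKYYsparse} adapted to the present abstract hypotheses, and I would closely follow that blueprint.

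The first step is the M\"obius-type algebraic identity
\[\cZ_i \;=\; \sum_{U \subseteq S} \cZ_i^{S,U}, \qquad S \subseteq [n]\setminus\{i\},\]
which follows by inclusion--exclusion from the definition~\eqref{eq:ZiSU}. Expanding $|n^{-1}\sum_i \cZ_i|^{p} = n^{-p}\sum_{\mathbf{i}\in[n]^{p}} \prod_{k=1}^{p} \cZ_{i_k}^{\epsilon_k}$ (with $\epsilon_k\in\{+,-\}$ encoding the conjugations needed for the even exponent) and applying this identity to each factor with $S_k = \{i_1,\dots,i_p\}\setminus\{i_k\}$, the moment becomes
\[n^{-p} \sum_{\mathbf{i}} \sum_{U_1,\dots,U_p} \E\!\left[\1\{\Xi\}\prod_{k=1}^{p} (\cZ_{i_k}^{S_k,U_k})^{\epsilon_k}\right].\]

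The second step exploits condition (i) to cancel terms. By construction $\cZ_{i_\ell}^{S_\ell,U_\ell}$ is independent of the rows $\{a_j: j \in S_\ell\setminus U_\ell\}$, and $\E_{i_k}[\cZ_{i_k}^{S_k,U_k}]=0$. Consequently the partial expectation $\E_{i_k}$ annihilates a summand unless $a_{i_k}$ also appears in some factor other than the $k$-th, which forces either $i_k \in U_\ell$ for some $\ell \neq k$, or $i_k=i_{k'}$ for some $k'\neq k$. Before applying $\E_{i_k}$, I would swap $\1\{\Xi\}$ for $\1\{\Xi_i\}$ using condition~(iii); the resulting error is controlled via conditions~(iv) and~(v) together with Cauchy--Schwarz and is absorbed into the final bound. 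This leaves a combinatorial ``covering'' condition on the pair $(\mathbf{i},\mathbf{U})$.

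The third step bounds each surviving term by H\"older's inequality and condition~(ii) with $r=p$, yielding
\[\Big|\E\!\Big[\1\{\Xi\}\prod_k (\cZ_{i_k}^{S_k,U_k})^{\epsilon_k}\Big]\Big| \;\le\; \prod_{k=1}^{p} y(Cxu_k)^{u_k}, \qquad u_k=|U_k|+1.\]
A careful count over tuples $\mathbf{i}$ and decompositions $\mathbf{U}$, classifying indices by their multiplicity in $\mathbf{i}$, produces the two sources of smallness in $x^{2}+n^{-1}$: every ``covered'' index $i_k \in U_\ell$ forces $u_\ell \ge 2$ and thereby contributes a factor of $x$, while every matched pair $i_k = i_{k'}$ saves a factor of $n^{-1}$ in the tuple sum (replacing $n^{2}$ by $n$). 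The polynomial prefactor $p^{12}$ arises from the number of classification patterns and M\"obius refinements.

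In my view the main obstacle is the combinatorial bookkeeping in this last step: one must simultaneously track, for each tuple $\mathbf{i}$, the partition structure induced by coincidences $i_k=i_{k'}$ and the covering pattern $i_k\in U_\ell$, and verify that together they extract exactly $p$ units of $x^{2}+n^{-1}$ while producing only a $p^{12p}$ prefactor rather than $p^{Cp}$. The analytic inputs---mean zero via~(i), the moment bound~(ii), the deterministic bound~(iv), and the tail on $\Xi^{c}$ in~(v)---are direct applications of the hypotheses; all of the real work lies in this counting argument.
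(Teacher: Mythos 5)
Your proposal is correct and matches the paper's approach: the paper's proof of this lemma is a one-line citation to \cite[Theorem 5.6]{EKYYsparse} (with a remark that the argument works for any $\eps>0$, not just $\eps=1/2$), and your sketch is an accurate outline of exactly that theorem's proof -- the $p$-th moment expansion, the M\"obius identity $\cZ_i=\sum_{U\subseteq S}\cZ_i^{S,U}$, the vanishing of partial expectations for ``uncovered'' indices, the $\Xi\leftrightarrow\Xi_i$ swap via conditions (iii)--(v), and the final combinatorial accounting yielding $x^2+n^{-1}$ per factor with a $p^{O(p)}$ prefactor. You correctly flag the combinatorial bookkeeping as the substantive step, which is where the work in \cite{EKYYsparse} lies; since the paper delegates that work to the cited reference rather than reproducing it, your proposal is at the same level of detail as the paper's own proof.
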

\begin{proof}
See \cite[Theorem 5.6]{EKYYsparse}. (The theorem is stated for $1+\eps=3/2$ in
condition (v), but the proof holds for any $\eps>0$.)
\end{proof}

The important condition encapsulating weak dependence above is (ii). Applying
(ii) with $U=\emptyset$, the condition requires first
that each $|\cZ_i^{[S]}|$, and
in particular each $|\cZ_i|=|\cZ_i^{[\emptyset]}|$, is of typical size
$Cxy$. In the application of this lemma, for $S=U$ and $i \notin U$,
we will define the variables $\cZ_i^{[V]}$ for $\emptyset \subseteq V \subseteq
U$ such that the quantity $\cZ_i^{U,U}$ in (\ref{eq:ZiSU})
is the variable $\cZ_i$ with its dependence on all $\{a_j:j \in U\}$
projected out by an inclusion-exclusion procedure. Then condition (ii) requires
that $\cZ_i$ depends weakly on $\{a_j:j \in U\}$, in the sense that
$|\cZ_i^{U,U}|$ is of typical size $x^{|U|+1}y \cdot (C(|U|+1))^{|U|+1}$,
which is roughly smaller than $|\cZ_i|$ by a factor of $x|U|$ for
each element of $U$. 
Assuming $1/\sqrt{n} \ll x \ll p^{-12}$,
the above then estimates the average $|n^{-1}\sum_i \cZ_i|$ to be of the
smaller order $p^{12}yx^2 \ll xy$. We refer the reader to the discussion in
\cite{EKYYsparse} for additional details.

We will check that the conditions of this lemma hold for $\cZ_i$ as defined by
\eqref{eq:Zi}, with the appropriate construction of variables $\cZ_i^{[U]}$.
To this end, we first extend \eqref{eq:wigneroffdiaglaw}, \eqref{eq:wignerdiaglaw}, and
\eqref{eq:wignerrowsumlaw} to $R^{(S)}$ for $|S| \le \log n$ in the following deterministic lemma:

\begin{lemma}\label{lem:locallawRS}
Suppose \eqref{eq:wigneroffdiaglaw}, \eqref{eq:wignerdiaglaw},
and \eqref{eq:wignerrowsumlaw} hold with the constant $C \equiv C_0$
for a deterministic symmetric matrix $A$, some $z \in D$, and all $j,k \in [n]$.
Then for all $S \subset [n]$ with $|S| \le \log n$,
and all $j \ne k \in [n] \setminus S$,
\begin{align}
|R^{(S)}_{jj}(z)-m_0(z)| &\le \frac{2C_0(\log n)^{2+2\eps+3a}}{ \sqrt{d} },
\label{eq:diagRS}\\
|R^{(S)}_{jk}(z)| &\le \frac{2C_0(\log n)^{2+2\eps+a}}{ \sqrt{d} },
\label{eq:offdiagRS}\\
|\coord_j^\top R^{(S)}(z) \bone| &\le 2C_0(\log n)^{1+\eps+a}.
\label{eq:rowsumRS}
\end{align}
\end{lemma}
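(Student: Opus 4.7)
The plan is to prove the lemma by induction on $|S|$, starting from the trivial base case $|S|=0$ (where $R^{(\emptyset)}=R$ and the three bounds hold by hypothesis with constant $C_0 \le 2C_0$), and showing that each additional index removed from $A$ perturbs the relevant quantities by only a $1/\log n$ fraction of the target. Since $|S| \le \log n$, the accumulated error will inflate the constant from $C_0$ to at most $2C_0$.

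For the inductive step, fix $S \subset [n]$ with $|S| < \log n$ and $j \in [n] \setminus S$, and suppose the bounds \eqref{eq:diagRS}--\eqref{eq:rowsumRS} hold at level $S$ with some effective constant $C_S' \in [C_0, 2C_0]$. The Schur complement identities \eqref{eq:LOO} and \eqref{eq:eR1}, applied to the matrix $A^{(S)}$ (i.e., with $R$ replaced by $R^{(S)}$ and $R^{(j)}$ replaced by $R^{(Sj)}$), give for all $k,\ell \in [n] \setminus (S \cup \{j\})$,
\[
R^{(Sj)}_{k\ell} = R^{(S)}_{k\ell} - \frac{R^{(S)}_{kj}R^{(S)}_{j\ell}}{R^{(S)}_{jj}}, \qquad \coord_k^\top R^{(Sj)}\bone = \coord_k^\top R^{(S)}\bone - \frac{R^{(S)}_{kj}}{R^{(S)}_{jj}}\,\coord_j^\top R^{(S)}\bone.
\]
By the inductive diagonal bound, $|R^{(S)}_{jj} - m_0(z)| = o(1)$ since $d \ge (\log n)^{b'}$ is super-polylogarithmic, so combined with $|m_0(z)| \asymp 1$ on $D$ from \prettyref{prop:m0wigner}, we get $|R^{(S)}_{jj}| \gtrsim 1$. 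Bounding the numerators via \eqref{eq:offdiagRS} and \eqref{eq:rowsumRS} at level $S$, each one-step perturbation is at most $O(C_S'^{\,2}(\log n)^{4+4\eps+2a}/d)$ for diagonal and off-diagonal entries, and at most $O(C_S'^{\,2}(\log n)^{3+3\eps+2a}/\sqrt{d})$ for the row sum.

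Under the assumption $d \ge (\log n)^{b'}$ with $b' \ge 16 + 4\eps + 2a$, each of these per-step perturbations is bounded by $C_0/\log n$ times the corresponding target bound in \eqref{eq:diagRS}--\eqref{eq:rowsumRS}. Therefore the effective constant grows by at most $C_0/\log n$ per step, remaining below $C_0 + \log n \cdot (C_0/\log n) = 2C_0$ after the at most $\log n$ iterations required to build up from $\emptyset$ to $S$.

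The main technical point to watch is that the constant must not inflate multiplicatively through the $\log n$ iterations. This is ensured precisely by the choice of $b'$ in Theorem \ref{thm:locallawwigner}, which makes the per-step error only an additive $1/\log n$-fraction of the target, so that the cumulative error over $\log n$ steps remains bounded. Of the three bounds, the row sum is the most delicate, since its per-step correction combines the off-diagonal estimate with the row sum bound itself, producing an extra factor of $(\log n)^{1+\eps+a}$; however, the exponent budget still falls comfortably within the margin allowed by $b' \ge 16 + 4\eps + 2a$.
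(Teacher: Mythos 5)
You take essentially the same route as the paper: induction on $|S|$ using the Schur complement identities \eqref{eq:LOO} (for the diagonal and off-diagonal entries) and \eqref{eq:eR1} (for the row sum), with the one-step perturbation controlled by the off-diagonal estimate at the previous level divided by the diagonal (which stays bounded below by $|m_0|/2 \asymp 1$). The paper tracks the worst-case quantities $\Lambda_s^d$, $\Lambda_s^o$, $\Gamma_s$ over all $|S|=s$ and runs a \emph{multiplicative} recursion $\Lambda_{s+1}^* \le \Lambda_s^*\bigl(1 + \tfrac{4C_0(\log n)^{2+2\eps+a}}{|m_0|\sqrt{d}}\bigr)$, then observes that the accumulated factor over $\le \log n$ steps stays below $2$; yours is the same idea phrased additively. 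The only issue is a small slip in the constant bookkeeping: you claim the per-step perturbation is at most $(C_0/\log n)$ times the target bound and conclude that the ``effective constant'' grows by $C_0/\log n$ per step, but the target already carries the factor $2C_0$, so under your own hypothesis the additive increment to the effective constant would be $2C_0^2/\log n$, giving $C_0 + 2C_0^2$ after $\log n$ steps rather than $2C_0$ (the arithmetic $C_0 + \log n \cdot (C_0/\log n) = 2C_0$ therefore does not follow from the stated premise). This is easily repaired --- since $d \ge (\log n)^{b'}$ with $b' \ge 16+4\eps+2a$, the per-step perturbation is in fact an $o(1/\log n)$ fraction of the target, which is more than enough slack --- but the paper's multiplicative recursion sidesteps the issue cleanly and is the more robust way to track the constant.
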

\begin{proof}
For integers $s \ge 0$, let
\begin{align*}
\Lambda_s^d&=\max\Big\{|R_{jj}^{(S)}-m_0|:\;|S|=s,
\;j \in [n] \setminus S\Big\},\\
\Lambda_s^o&=\max\Big\{|R_{jk}^{(S)}|:\;|S|=s,
\;j \ne k \in [n] \setminus S\Big\}.
\end{align*}
When \eqref{eq:wigneroffdiaglaw} and \eqref{eq:wignerdiaglaw} hold,
we have that $\Lambda_s^d \le C_0(\log n)^{2+2\eps+3a}/ \sqrt{d} $
and $\Lambda_s^o \le C_0(\log n)^{2+2\eps+a}/ \sqrt{d} $ for $s=0$.
By \eqref{eq:LOO}, we have for each $s \ge 1$ and $* \in \{d,o\}$ that
\begin{equation}\label{eq:lambdarecursion}
\Lambda_{s+1}^* \le \Lambda_s^*+\frac{(\Lambda_s^o)^2}{|m_0|-\Lambda_s^d}.
\end{equation}
Assume inductively that for some $s \le \log n$,
\begin{equation}\label{eq:inductivebound}
\Lambda_s^d \le \frac{C_0(\log n)^{2+2\eps+3a}}{ \sqrt{d}
}\left(1+\frac{4C_0(\log n)^{2+2\eps+a}}
{ |m_0|\sqrt{d} }\right)^s,
\; \Lambda_s^o \le \frac{C_0(\log n)^{2+2\eps+a}}{ \sqrt{d} }
\left(1+\frac{4C_0(\log n)^{2+2\eps+a}}{ |m_0|\sqrt{d} }\right)^s.
\end{equation}
Applying $d \gg (\log n)^{6+4\eps+2a}$, $|m_0| \ge c$, and $s \le \log n$,
this implies in particular that
\[\Lambda_s^d \le \frac{2C_0(\log n)^{2+2\eps+3a}}{ \sqrt{d} },
\qquad \Lambda_s^o \le \frac{2C_0(\log n)^{2+2\eps+a}}{ \sqrt{d} }.\]
We then have $|m_0|-\Lambda_s^d \ge |m_0|/2$ for $d \gg (\log n)^{4+4\eps+6a}$,
so \eqref{eq:lambdarecursion} yields
\[\Lambda_{s+1}^* \leq \max(\Lambda_s^*,\Lambda_s^o)
\left(1+\frac{2\Lambda_s^o}{|m_0|}\right)
\leq \max(\Lambda_s^*,\Lambda_s^o)
\left(1+\frac{4C_0(\log n)^{2+2\eps+a}}{|m_0| \sqrt{d}}\right).\]
Thus both bounds of \eqref{eq:inductivebound} hold for $s+1$,
completing the induction. This establishes \eqref{eq:diagRS}
and \eqref{eq:offdiagRS}.

To show \prettyref{eq:rowsumRS}, set
\[\Gamma_s=\max\{|\coord_j^\top R^{(S)} \bone |:\;|S|=s,\;j \notin S\}.\]
When \eqref{eq:wignerrowsumlaw} holds, $\Gamma_0 \le C_0(\log n)^{1+\eps+a}$.
Applying \eqref{eq:eR1} and the bound $|m_0|-\Lambda_s^d \ge |m_0|/2$, we have 
\[
\Gamma_{s+1} \le (1+2\Lambda_s^o/|m_0|)\Gamma_s \overset{\prettyref{eq:offdiagRS}}{\leq} \pth{1 + \frac{4C_0(\log n)^{2+2\eps+3a}}{ |m_0|\sqrt{d}  }} \Gamma_s,
\]
Thus $\Gamma_s \le 2\Gamma_0$ for all $s \le \log n$.
\end{proof}

\begin{lemma}\label{lem:fluctuationavgconditions}
Fix $z \in D$. Let $\cZ_i$ be defined in \eqref{eq:Zi}. For $U \subset [n]$
not containing $i$, define
\[\cZ_i^{[U]}=\sum_{j,k}^{(iU)} a_{ik}R_{kj}^{(iU)}=\sum_k^{(iU)} a_{ik}
(\coord_k^\top R^{(iU)} \bone ).\]
Let $\Xi$ be the event where
\begin{itemize}
\item \eqref{eq:wigneroffdiaglaw}, \eqref{eq:wignerdiaglaw},\
and \eqref{eq:wignerrowsumlaw} all hold at $z$, for all distinct $j,k \in [n]$,
\item $|a_{ij}| \le 1$ for all $i,j \in [n]$, and
\item $\|A\| \le 2.5$.
\end{itemize}
Let $p \in [2,(\log n)-1]$ be an even integer, and set
\[x=\frac{(\log n)^{2+2\eps+a}}{ \sqrt{d} }, \qquad y=C' \sqrt{d} (\log
n)^{-\eps}\]
for a sufficiently large constant $C'>0$. Then
all of the conditions of Lemma \ref{lem:fluctuationavg} are satisfied.
\end{lemma}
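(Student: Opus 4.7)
The plan is to verify each of conditions (i)--(v) of Lemma \ref{lem:fluctuationavg} in turn, with conditions (i) and (iii)--(v) being essentially bookkeeping and condition (ii) forming the main technical obstacle.

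For condition (i), note that $R^{(iU)}$ depends only on the rows of $A$ outside $iU$, while the coefficients $\{a_{ik}:k\notin iU\}$ lie in row $i$; hence $\cZ_i^{[U]}$ is a function of row $i$ together with $A^{(iU)}$, and in particular is independent of $\{a_j:j\in U\}$. The partial expectation $\E_i[\cZ_i^{[U]}]$ then vanishes since $\E[a_{ik}]=0$ and $a_{ik}$ is independent of $R^{(iU)}$. For condition (v), the event $\Xi$ is the intersection of the four events in its definition, each of which holds with probability $1-e^{-c(\log n)^{1+\eps}}$ by the entrywise bound of part~(a), the row-sum bound of Section \ref{sec:rowsum}, \eqref{eq:abound}, and Lemma \ref{lmm:normbound}; a union bound with a mild adjustment of constants then yields the required $1-e^{-c(\log n)^{1+\eps}p}$ for $p\le\log n$.

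For conditions (iii) and (iv) we use deterministic bounds. On $\Xi$ we have $|a_{ik}|\le 1$ and $\|R^{(iV)}\|\le(\log n)^a$ (the latter from $|\Im z|\ge(\log n)^{-a}$ on $D_1$ and from $\|A\|\le 2.5$ together with Cauchy interlacing on $D_2$), so $|\cZ_i^{[V]}|\le \sum_k|a_{ik}|\cdot|\coord_k^\top R^{(iV)}\ones|\le n^2(\log n)^a\le yn^C$, giving~(iv). For~(iii), on $\Xi_i$ only $A^{(i)}$ is controlled, but row $i$ has centered entries with variance at most $1/n$; conditioning on $A^{(i)}$ and using orthogonality yields
\[\E_i\bigl[|\cZ_i^{[V]}|^2\bigr]=\frac{1}{n}\sum_k^{(iV)}|\coord_k^\top R^{(iV)}\ones|^2\le\|R^{(iV)}\|^2,\]
hence $\E[|\cZ_i^{[V]}|^2\1_{\Xi_i}]\lesssim(\log n)^{2a}$, and Cauchy--Schwarz across the $\le 2^p$ summands of $\cZ_i^{S,U}$ produces $\E[|\cZ_i^{S,U}|^2\1_{\Xi_i}]\le 2^{3p}(\log n)^{2a}\le n^{Cp}$.

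The crux is condition (ii), the sharp decay $\E[\1\{\Xi\}|\cZ_i^{S,U}|^r]\le(y(Cxu)^u)^r$. The key observation is that the alternating sum
\[\cZ_i^{S,U}=\sum_{T\subseteq U}(-1)^{|T|}\cZ_i^{[(S\setminus U)\cup T]}\]
acts as a discrete difference operator in the indices of $U$. Applying the resolvent identity \eqref{eq:LOO} recursively, the elementary difference $\cZ_i^{[V]}-\cZ_i^{[V\cup\{j\}]}$ gains an additional off-diagonal factor $R^{(\cdots)}_{\star j}$, which is $\lesssim x$ on $\Xi$ by Lemma \ref{lem:locallawRS}. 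Iterating this $u-1$ times yields a representation
\[\cZ_i^{S,U}=\sum_{k\notin iS}a_{ik}\,\Psi_k,\]
where each coefficient $\Psi_k$ is a sum of at most $(Cu)^u$ products of $u-1$ off-diagonal resolvent entries (each $\lesssim x$), $O(1)$ diagonal factors, and a final row-sum factor; this produces $\|\Psi\|_\infty\lesssim(Cu)^u x^{u-1}(\log n)^{1+\eps+a}$ and $\|\Psi\|_2\lesssim(Cu)^u x^{u-1}(\log n)^a\sqrt{n}$. Conditioning on $A^{(iS)}$ and applying the moment concentration inequality \eqref{eq:concentrationmoment} to the linear form $\sum_k a_{ik}\Psi_k$ yields
\[\E\bigl[|\cZ_i^{S,U}|^r\;\big|\;A^{(iS)}\bigr]\le(Cr)^r\bigl(\|\Psi\|_\infty/\sqrt{d}+\|\Psi\|_2/\sqrt{n}\bigr)^r\lesssim\bigl(y(Cxu)^u\bigr)^r,\]
matching the required form after using $r\le p$, the choice $y=C'\sqrt{d}(\log n)^{-\eps}$, and the estimates $(\log n)^{1+\eps+a}/\sqrt{d}\le x$ and $(\log n)^a\le xy$. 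The chief difficulty is the combinatorial bookkeeping of this resolvent expansion, which follows the template of \cite[Section 5]{EKYYsparse}; the side condition $x\le 1/(p^5\log n)$ reduces to $\sqrt{d}\gtrsim p^5(\log n)^{3+2\eps+a}$, i.e., $d\gtrsim(\log n)^{16+4\eps+2a}$ for $p\le\log n$, well within the hypothesis $d\ge(\log n)^{b'}$.
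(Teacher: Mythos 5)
Your overall approach matches the paper's: verify (i), (iv), (v) by bookkeeping, verify (iii) by conditional orthogonality and Cauchy--Schwarz across the $\le 2^{|U|}$ summands, and verify (ii) via an inclusion--exclusion/discrete-difference expansion of $\cZ_i^{S,U}$, bounding the coefficients deterministically on $\Xi$ and then applying the moment concentration bound \eqref{eq:concentrationmoment} conditional on $A^{(iS)}$. However, there is a genuine gap in your verification of condition (ii): your representation $\cZ_i^{S,U}=\sum_{k\notin iS}a_{ik}\Psi_k$ is incomplete. For $k\in U$, the term $a_{ik}(\coord_k^\top R^{((iS\setminus U)\cup T)}\bone)$ appears in $\cZ_i^{[(S\setminus U)\cup T]}$ only when $k\notin T$, so the alternating sum over $T\subseteq U$ does not cancel these terms; rather it leaves a boundary contribution $\sum_{k\in U}a_{ik}\alpha_k$ with $\alpha_k=\sum_{T\subseteq U\setminus\{k\}}(-1)^{|T|}(\coord_k^\top R^{((iS\setminus U)\cup T)}\bone)$. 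The correct decomposition is therefore $\cZ_i^{S,U}=\sum_{k\in U}a_{ik}\alpha_k+\sum_{k\notin iS}a_{ik}\beta_k$, and your $\Psi_k$ captures only the $\beta_k$ piece.

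This matters because each $\alpha_k$ is a $(|U|-1)$-fold difference, not a $|U|$-fold difference, so it carries one fewer off-diagonal resolvent factor and is bounded only by $\tilde y(Cxu)^{|U|}$ rather than $\tilde y(Cxu)^{|U|+1}$; this is larger than your claimed $\|\Psi\|_\infty$ by a factor $1/(Cxu)$, which is large. Your argument needs to account for these $|U|\le\log n$ extra terms. The fix (which is what the paper does): set $v_k=\alpha_k$ for $k\in U$, $v_k=\beta_k$ for $k\notin iS$, $v_k=0$ otherwise, and observe that (a) the $\ell_2$ norm is still dominated by the $\beta_k$'s because $|U|(Cxu)^{2|U|}\ll n(Cxu)^{2|U|+2}$ under $x\gtrsim 1/\sqrt{n}$, and (b) the $\ell_\infty$ norm $\|v\|_\infty\le\tilde y(Cxu)^{|U|}$ is compensated in \eqref{eq:concentrationmoment} by the factor $1/\sqrt d\le x$, so that $\|v\|_\infty/\sqrt d\le\tilde y(Cxu)^{|U|+1}=\tilde y(Cxu)^u$, matching the order of the $\ell_2$ contribution. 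Without tracking the $\alpha_k$ boundary terms, the moment bound for $\cZ_i^{S,U}$ is not established.
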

\begin{proof}
Condition (i) is clear by definition, as row $a_i$ of $A$ is independent of
$R^{(iU)}$.

To check (ii), note first that the bound $x \le 1/(p^5 \log n)$ follows
from $d \geq (\log n)^{16+4\eps+2a}$.
For $U \subseteq S$ and $i \notin S$ we write
\begin{align*}
\cZ_i^{S,U}&=\sum_{T:\;T \subseteq U} (-1)^{|T|}\cZ_i^{[(S \setminus U) \cup T]}\\
&=\sum_{T:\;T \subseteq U} (-1)^{|T|}\sum_k^{((iS \setminus U) \cup T)}
a_{ik}(\coord_k^\top R^{((iS \setminus U) \cup T)} \bone )\\
&=\sum_{k \in U} a_{ik} \left(
\sum_{T:\;T \subseteq U \setminus \{k\}}
(-1)^{|T|}(\coord_k^\top R^{((iS \setminus U) \cup T)} \bone )\right)
+\sum_k^{(iS)} a_{ik}\left(
\sum_{T:\;T \subseteq U}(-1)^{|T|}(\coord_k^\top R^{((iS \setminus U) \cup T)} \bone )\right)\\
&\defn\sum_{k \in U} a_{ik} \alpha_k+\sum_k^{(iS)} a_{ik}\beta_k.
\end{align*}
We claim that deterministically on the event $\Xi$, there is a constant $C>0$
such that for any $W,V \subset [n]$ disjoint with $|W \cup V| \le \log n$,
and any $i \notin W \cup V$, we have
\begin{equation}\label{eq:weakdependence}
\left|\sum_{T:\;T \subseteq W}
(-1)^{|T|} \left(\coord_i^\top R^{(V \cup T)} \bone \right) \right| \le
\tilde{y}(Cxw)^w , 
\end{equation}
where $w=|W|+1$, $x=(\log n)^{2+2\eps+a}/ \sqrt{d}$, and $\tilde{y}=C\sqrt{d}
(\log n)^{-1-\eps}$. We will verify this claim at the end of the proof.
Assuming this claim, we apply it above with $V=iS \setminus U$ and either
$W=U$ or $W=U \setminus \{k\}$. Then setting $u=|U|+1 \geq w$, we have on $\Xi$
that
\begin{equation}\label{eq:alphabetabound}
|\alpha_k| \leq \tilde{y}(Cxu)^{|U|}, \qquad
|\beta_k| \leq \tilde{y}(Cxu)^{|U|+1}.
\end{equation}
Let $r$ be any even integer with $r \leq p \leq (\log n)-1$.
As $\alpha_k,\beta_k$ are independent of row
$a_i$ of $A$ by definition, we have for the partial expectation $\E_i$ over
$a_i$ that
\begin{align*}
& \E_i \left[ \1\{\Xi\} \left|\cZ_i^{S,U} \right|^r \right] \\
&=\E_i\left[\1\{\Xi\}\left|\sum_{k \in U} a_{ik}\alpha_k
+\sum_k^{(iS)}a_{ik}\beta_k\right|^r\right]\\
&\le \1\{|\alpha_k| \le \tilde{y}(Cxu)^{|U|} \text{ and }
|\beta_k| \le \tilde{y}(Cxu)^{|U|+1} \text{ for all }
k\} \cdot \E_i\left[\left|\sum_{k \in U} a_{ik}\alpha_k
+\sum_k^{(iS)}a_{ik}\beta_k\right|^r\right].
\end{align*}
We apply \eqref{eq:concentrationmoment} for the conditional expectation
$\E_i$, with $v$ having entries
$v_k=\alpha_k$ for $k \in U$, $v_k=\beta_k$ for $k \notin iS$, and $v_k=0$
otherwise. 
Recall that $w \leq |U| \leq |S| \leq \log n$. 
Since $Cxw \ll 1$ and $|U|(Cxw)^{2|U|} \ll (n-|U|)(Cxw)^{2|U|+2}$
by the definition of $x$ and $d \leq n$, the bounds
(\ref{eq:alphabetabound}) imply
\[\|v\|_\infty \leq \tilde{y}(Cxu)^{|U|},
\qquad \|v\|_2 \leq \sqrt{2n} \cdot \tilde{y}(Cxw)^{|U|+1}.\]
Then for a constant $C'>0$, (\ref{eq:concentrationmoment}) gives
\[\E_i \left[ \1\{\Xi\} \left|\cZ_i^{S,U} \right|^r \right]
\le (C'r\tilde{y}(Cxu)^u)^r.\]
Then taking the full expectation and
setting $y=C'(\log n)\tilde{y} \geq C'r\tilde{y}$ (since $r \leq p \leq \log n$) yields condition (ii).

For condition (iii), we have
\begin{align*}
\E \left[\1\{\Xi_i\} \left|\cZ_i^{S,U} \right|^2 \right] &\le
2^{|U|}\sum_{T:\;T \subseteq U} \E[\1\{\Xi_i\}
|\cZ_i^{[(S \setminus U) \cup T]}|^2]\\
&=2^{|U|}\sum_{T:\;T \subseteq U}
\sum_{k,k'}^{((iS \setminus U) \cup T)}
\E[a_{ik}a_{ik'}]\E\left[\1\{\Xi_i\}(\coord_k^\top R^{((iS \setminus U) \cup T)}\bone)
(\coord_{k'}^\top R^{((iS \setminus U) \cup T)} \bone)\right]\\
&=2^{|U|}\sum_{T:\;T \subseteq U} \sum_k^{((iS \setminus U) \cup T)}
\E[a_{ik}^2]\E\left[\1\{\Xi_i\}\left|\coord_k^\top R^{((iS \setminus U) \cup T)} \bone
\right|^2 \right],
\end{align*}
where the second line applies the independence of $a_i$ and $A^{(i)}$.
Note that on $\Xi_i$, we have $\|A^{(i)}\| \le 2.5$. Then
applying $|U| \le \log n$,
the norm bound $\|R^{((iS \setminus U) \cup T)}\| \le (\log n)^a$ on
$\Xi_i$,
and $\E[a_{ik}^2] \le C^2/n$, we get (iii). 
For (iv), we apply the
condition $|a_{ik}| \le 1$ by definition of $\Xi$, together with the bound
$\|R^{(iU)}\| \leq (\log n)^a$ on $\Xi$. Finally, (v) holds by the probability
bound of $1-e^{-c(\log n)^{1+\eps}}$
established for \eqref{eq:wigneroffdiaglaw}, \eqref{eq:wignerdiaglaw},
\eqref{eq:wignerrowsumlaw}, \eqref{eq:abound}, and in
Lemma \ref{lmm:normbound}.

It remains to establish the claim \eqref{eq:weakdependence}. For $W=\emptyset$,
this follows from (\ref{eq:wignerrowsumlaw}). Assume then that $w \geq 1$,
and write $W=\{j_1,\ldots,j_{w-1}\}$ (in any order).
For a function $f:\R^{n \times n} \to \C$ and any index $j \in [n]$,
define $Q_jf:\R^{n \times n} \to \C$ by
\[(Q_j f)(A)=f(A)-f(A^{(j)}).\]
Note that if $f$ is in fact a function of $A^{(S)}$, i.e.\ $f(A)=f(A^{(S)})$ for
every matrix $A \in \R^{n \times n}$,
then $Q_jf(A)=f(A^{(S)})-f(A^{(jS)})$. Fix $i$ and $V$, and
define $f(A)=\coord_i^\top R^{(V)} \bone $. This satisfies
$f(A)=f(A^{(V)})$ for every $A$. Then by inclusion-exclusion,
the quantity to be bounded is equivalently written as
\[\sum_{T:\;T \subseteq W} (-1)^{|T|}
(\coord_i^\top R^{(V \cup T)} \bone )=(Q_{j_{w-1}}\ldots Q_{j_2}Q_{j_1}f)(A).\]

We apply Schur complement identities
to iteratively to expand $Q_{j_{w-1}}\ldots Q_{j_1}f$:
First applying \eqref{eq:eR1}, we get
\[Q_{j_1}f(A)=\coord_i^\top R^{(V)} \bone -\coord_i^\top R^{(j_1V)} \bone 
=R_{ij_1}^{(V)} \cdot \frac{1}{R_{j_1j_1}^{(V)}} \cdot \coord_{j_1}^\top R^{(V)} \bone .\]
Then applying \eqref{eq:eR1}, \eqref{eq:Rkkinv}, and \eqref{eq:LOO}
to the three factors on the right side above, and using the identity
\[xyz-\tilde{x}\tilde{y}\tilde{z}=xy(z-\tilde{z})
+x(\tilde{y}-y)\tilde{z}+(\tilde{x}-x)\tilde{y}\tilde{z},\]
we get
\begin{align*}
Q_{j_2}Q_{j_1}f(A)&=R_{ij_1}^{(V)} \cdot \frac{1}{R_{j_1j_1}^{(V)}} \cdot 
\left(\frac{R_{j_1j_2}^{(V)}}{R_{j_2j_2}^{(V)}} \cdot
\coord_{j_2}^\top R^{(V)} \bone \right)
+R_{ij_1}^{(V)} \cdot \left(-\frac{ \left(R_{j_1j_2}^{(V)} \right)^2}{R_{j_1j_1}^{(j_2V)}
R_{j_2j_2}^{(V)}R_{j_1j_1}^{(V)}}\right) \cdot \coord_{j_1}^\top R^{(j_2V)} \bone \\
&\hspace{1in}+\frac{R_{ij_2}^{(V)}R_{j_2j_1}^{(V)}}{R_{j_2j_2}^{(V)}}
\cdot \frac{1}{R_{j_1j_1}^{(j_2V)}} \cdot \coord_{j_1}^\top R^{(j_2V)} \bone .
\end{align*}
Applying \eqref{eq:LOO}, \eqref{eq:Rkkinv}, and \eqref{eq:eR1} to
each factor of each summand
above, and repeating iteratively, an induction argument verifies the following
claims for each $t \in \{1,\ldots,w-1\}$:
\begin{itemize}
\item $Q_{j_t}\ldots Q_{j_1}f(A)$ is a sum of at most
$\prod_{s=1}^{t-1} 4s$ summands (with the convention
$\prod_{s=1}^0 4s=1$), where
\item Each summand is a product of at most $4t$ factors, where
\item jach factor is one of the following three forms, for a set
$S \subseteq V \cup W$: $R^{(S)}_{jk}$ for $j,k \notin S$ distinct,
or $1/R^{(S)}_{jj}$ for $j \notin S$, or $\coord_j^\top R^{(S)} \bone $ for $j \notin S$.
Furthermore,
\item Each summand of $Q_{j_t}\ldots Q_{j_1}f(A)$ satisfies:
(a) It has exactly one factor of the form $\coord_j^\top R^{(S)} \bone $. (b) The number of
factors of the form $1/R_{jj}^{(S)}$ is less than or equal to the number of
factors of the form $R_{jk}^{(S)}$ for $j \ne k$. (c) There are at least
$t$ factors of the form $R^{(S)}_{jk}$ for $j \ne k$.
\end{itemize}
Finally, we apply this with $t=w-1$ and use the bound
\[\prod_{s=1}^{t-1} 4s \le (4w)^w.\]
By Lemma \ref{lem:locallawRS}, since $|W \cup V| \le \log n$, we have
$|R_{jk}^{(S)}| \le C(\log n)^{2+2\eps+a}/ \sqrt{d} $,
$|R_{jj}^{(S)}| \ge |m_0|/2$,
and $|\coord_j^\top R^{(S)} \bone | \le C(\log n)^{1+\eps+a}$
on the event $\Xi$. Thus we get
\[|Q_{j_{w-1}}\ldots Q_{j_1}f(A)| \le (4w)^w \cdot
\left(\frac{C(\log n)^{2+2\eps+a}}{ \sqrt{d} }\right)^{w-1}
\cdot C(\log n)^{1+\eps+a} \le \tilde{y}(C'xw)^w\]
for $x=(\log n)^{2+2\eps+a}/ \sqrt{d}$
and $\tilde{y}=C\sqrt{d}(\log n)^{-1-\eps}$, as claimed.
\end{proof}

\medskip
We now show \eqref{eq:wignertotalsumlaw} holds for $z \in D$
with probability $1-e^{-c(\log n)(\log \log n)}$.
The diagonal bound \eqref{eq:wignerdiaglaw} implies
\begin{equation}\label{eq:Rdiagbound}
|\Tr R-n \cdot m_0| \le \frac{Cn(\log n)^{2+2\eps+3a/2}}{ \sqrt{d} }.
\end{equation}
To bound the sum of off-diagonal elements of $R$,
we apply \eqref{eq:Rjk} to write
\begin{equation}
\sum_{i \ne k} R_{ik}=-\sum_i R_{ii}\cZ_i
=-m_0\sum_i \cZ_i-\sum_i (R_{ii}-m_0)\cZ_i.
\label{eq:offRsum}
\end{equation}
Applying \eqref{eq:wignerdiaglaw} and \eqref{eq:Zboundfinal} yields
\begin{equation}\label{eq:Roffdiag1}
\sum_i |(R_{ii}-m_0)\cZ_i| \le \frac{Cn(\log n)^{3+3\eps+5a/2}}{ \sqrt{d} }.
\end{equation}
Then applying Lemma \ref{lem:fluctuationavg} with $x,y,\Xi$ as
defined in Lemma \ref{lem:fluctuationavgconditions} and with $p$ being the
largest even integer less than $(\log n)-1$, we have
\begin{equation}\label{eq:pointwisebound}
\1\{\Xi\}
\left|n^{-1}\sum_i \cZ_i\right| \le C(\log n)^{12} \cdot  \sqrt{d} (\log
n)^{-\eps} \cdot (\log n)^{4+4\eps+2a}/d \le \frac{C(\log n)^{16+3\eps+2a}}
{ \sqrt{d} }
\end{equation}
with probability $1-e^{-c(\log n)(\log \log n)}$. 
Since $\bone^\top R \bone = \Tr R + \sum_{i \ne k} R_{ik}$, 
multiplying \prettyref{eq:pointwisebound} by $n \cdot m_0$ and combining with \prettyref{eq:Rdiagbound}--\prettyref{eq:Roffdiag1} yields \eqref{eq:wignertotalsumlaw}.

\bibliographystyle{alpha}
\bibliography{matching}

\end{document}